\documentclass[11pt, a4paper, reqno]{article}
\usepackage{graphicx,bm} 
\usepackage{authblk}
\usepackage{times,cite}
\usepackage[utf8]{inputenc}
\usepackage{hyperref}
\hypersetup{
  colorlinks,
  linkcolor={violet},
  citecolor={brown},
  urlcolor={gray}
}
\usepackage{pgfplots}
\pgfplotsset{compat=newest}

\usetikzlibrary{positioning, shapes, arrows.meta}

\usepackage{amsmath}
\usepackage{amsfonts,mathrsfs}
\usepackage{amsthm}
\usepackage{amsfonts}
\usepackage{amssymb}
\usepackage{stmaryrd}
\usepackage{epic}
\usepackage{eepic,epsfig}
\usepackage{graphics}
\usepackage{epsfig,multicol,bbm}
\usepackage{braket}
\usepackage{subcaption}
\usepackage{verbatim}
\usepackage{lscape,longtable}
\usepackage{subfiles}
\usepackage{pdflscape}

\usepackage{enumitem}
\usepackage{mathtools}
\usepackage{stmaryrd}

\usepackage{tikz-cd}
\usepackage{tikz}
\usetikzlibrary{decorations.pathmorphing}
\usetikzlibrary{fit,shapes.geometric}
\usepackage{xcolor}
\usepackage{environ}

\usepackage[left=2.5cm,right=2.5cm,top=3cm,bottom=4cm]{geometry}

\renewcommand{\=}{\; = \;}

\newcommand\CC{\mathbb{C}}
\newcommand\HH{\mathbb H}
\newcommand\RR{\mathbb R}
\newcommand\QQ{\mathbb{Q}}
\newcommand\ZZ{\mathbb{Z}}

\newcommand\calL{\mathcal{L}}

\newcommand{\re}{{\rm e}}
\newcommand{\ri}{{\rm i}}

\def\IZ{{\mathbb Z}}
\def\IR{{\mathbb R}}
\def\CC{{\mathbb C}}
\def\IH{{\mathbb H}}

\def\IP{{\mathbb P}}

\def\frakg{\mathfrak{g}}
\def\frakf{\mathfrak{f}}
\def\tfrakg{\tilde{\mathfrak{g}}}
\def\tfrakf{\tilde{\mathfrak{f}}}

\newcommand{\CA}{{\cal A}}
\newcommand{\CB}{{\cal B}}

\newcommand{\CL}{{\cal L}}
\newcommand{\CCL}{{\mathscr{L}}}

\newcommand{\CS}{{\cal S}}

\newcommand{\be}{\begin{equation}}
\newcommand{\ee}{\end{equation}}
\newcommand{\ba}{\begin{aligned}}
\newcommand{\ea}{\end{aligned}}

\newcommand{\borel}{\mathcal{B}}

\newcommand{\Jodd}{J^{{\rm odd}}}
\newcommand{\Jeven}{J^{{\rm even}}}

\newcommand{\Om}{\boldsymbol{\Omega}}
\newcommand{\OmBor}{\Omega^{\mathrm{Borel}}}

\newcommand{\tEI}{\widetilde{\mathsf{EI}}}
\newcommand{\bEI}{\boldsymbol{\mathsf{EI}}}
\newcommand{\tbEI}{\widetilde{\boldsymbol{\mathsf{EI}}}}

\newtheorem{thm}{Theorem}[section]
\newtheorem{prop}[thm]{Proposition}
\newtheorem{lemma}[thm]{Lemma}

\newtheorem{dfn}{Definition}[section]
\newtheorem{con}{Conjecture}
\newtheorem*{con*}{Conjecture}
\newtheorem{rmk}{Remark}[section]

\title{\huge{Modular resurgent structures for vectors}}

\author[$a$, $b$]{Miranda C. N. Cheng}
\affil[$a$]{\normalsize\textit{Institute of Physics and Korteweg-de Vries Institute for Mathematics, University of Amsterdam, Science Park 904, 1098 XH Amsterdam, Netherlands}}
\affil[$b$]{\normalsize\textit{Institute for Mathematics, Academia Sinica, Taipei, Taiwan}}
\author[$c$]{Ioana Coman}
\affil[$c$]{\normalsize\textit{School of Mathematics and Maxwell Institute for Mathematical Sciences, University of Edinburgh, UK}}
\author[$d$]{Veronica Fantini}
\affil[$d$]{\normalsize\textit{LMO, Université Paris-Saclay, 91405 Orsay, France}}
\author[$e$]{Claudia Rella}
\affil[$e$]{\normalsize\textit{Institut des Hautes \'Etudes Scientifiques, 91440 Bures-sur-Yvette, France}}

\date{}

\begin{document}

\maketitle

\begin{abstract}
Building on prior results~\cite{FR24-2}, we introduce vector-valued modular resurgent series, whose components exhibit a single infinite tower of singularities in the Borel plane, trivial secondary resurgent series, and Stokes constants given by linear combinations of the coefficients of a vector of $L$-functions. 
We extend the paradigm of modular resurgence to this setting, emphasizing the role of the Stokes constants and the interplay between the associated vectors of $q$-series and Dirichlet series, and describing the resulting symmetry relating canonical pairs of vector-valued modular resurgent series.
Moreover, we conjecture that certain vectors of $q$-series with modular resurgent asymptotics are vector-valued quantum modular forms and can be reconstructed via median resummation. 
Finally, we show that vectors of $q$-Pochhammer symbols, previously considered in~\cite{FR25}, and Eichler integrals of vector-valued modular forms of weight $1/2$ and $3/2$ can be studied within the framework of vector-valued modular resurgence. While the first case amounts to a convenient repackaging of scalar modular resurgence, the second involves a non-trivial representation of the modular group and therefore illustrates the necessity of the vector-valued framework; we establish its modular resurgent structure in general, and work it out in full detail for the unary theta series, whose Eichler integrals are the false theta functions of quantum topology.
\end{abstract}

\begingroup
\renewcommand{\thefootnote}{}
\footnotetext{\textit{E-mails:} 
\href{mailto:mcheng@as.edu.tw}{\texttt{mcheng@as.edu.tw}},  
\href{mailto:icomanl@ed.ac.uk}{\texttt{icomanl@ed.ac.uk}},  
\href{mailto:veronica.fantini@universite-paris-saclay.fr}{\texttt{veronica.fantini@universite-paris-saclay.fr}},  
\href{mailto:rella@ihes.fr}{\texttt{rella@ihes.fr}}}
\endgroup

\tableofcontents

\section{Introduction}

The asymptotic expansion of a function at a singular point is often divergent, and its divergence is informative. The factorial growth of the coefficients reflects the location and strength of the singularities of the Borel transform, which in turn encode the exponentially small, non-perturbative contributions that the asymptotic series alone cannot capture. The framework of \emph{resurgence}, due to \'Ecalle~\cite{EcalleI, diver-book}, makes this relationship precise: it organises the singular structure of the Borel transform into a collection of analytic data--the \emph{Stokes constants} and the resurgent series at each singularity--which control the discontinuities of the Borel--Laplace sums across Stokes rays and constrain the global analytic structure of the function reconstructed from the asymptotics.

A completely different structural principle, that of modularity, organises many of the $q$-series that arise as characters of (non-rational) vertex operator algebras, quantum topological invariants, and partition functions of quantum field theories, among others. 
While resurgence extracts the analytic structure of a function $f$, modularity captures its hidden symmetries. Sometimes, $f$ is invariant when transformed under a subgroup $\Gamma\subseteq\mathsf{SL}_2(\IZ)$ only up to a holomorphic correction: classical modularity \emph{fails}, but it does so in a controlled way. Zagier's notion of a (holomorphic) quantum modular form~\cite{ZagierI, ZagierII} formalises this controlled failure as a cocycle condition on a difference function $h_\gamma[f]$, $\gamma \in \Gamma$.

Interesting examples exhibiting both non-trivial quantum modular and resurgent structures prompt the question: exactly when and how are the two bound together? 
The framework of \emph{modular resurgence}, introduced by the last two authors in~\cite{FR24-2} (see also~\cite{FR24-1, FR25}), gives an answer in a class of examples. A Gevrey-1 asymptotic series $\tfrakg$ is said to be a modular resurgent structure if (i) its Borel transform has a single tower of simple singularities at the integer multiples of a fixed constant $\CA\in\CC$, (ii) the resurgent series at each singularity is the constant function given by a Stokes constant, and (iii) the resulting Dirichlet series of Stokes constants admits meromorphic continuation and obeys a functional equation.
Under these hypotheses, the generating $q$-series $\frakf$ of the Stokes constants is paired with a companion $q$-series $\frakg$, obtained as the inverse Mellin transform of the partner Dirichlet series provided by the functional equation, in a closed diagram of resurgent and Mellin operations: each member of the pair encodes the non-perturbative content of the other. Conjecturally, the $q$-series $\frakg$ is reconstructed by the median resummation of $\tfrakg$ and is a quantum modular form~\cite[Conjs.~1 and~2]{FR24-2}. See the diagram in Eq.~\eqref{diag:resurgence-L funct}. 
As such, a modular resurgent series (MRS) is characterized by special analytic, number-theoretic, and quantum modular properties that make it a particularly enticing object of study.

At the same time, modular forms, together with their generalizations such as mock or quantum modular forms, 
often acquire their most natural formulation in a vector-valued setting. Theta functions attached to even lattices, characters of rational vertex operator algebras, and mock theta functions and their completions, all transform as a vector under $\mathsf{SL}_2(\IZ)$ or one of its finite-index subgroups. Moreover, there is often a physical or geometrical reason why this vector-valued structure must be present. 
The corresponding multiplier system encodes representation-theoretic data of substantial interest: the modular $S$-matrix of a rational CFT, the underlying modular tensor category, the Weil representation attached to a discriminant form, the representation of the metaplectic cover of $\mathsf{SL}_2(\IZ)$ associated to theta series of half-integer weight. In each of these settings, scalar components are projections of an inherently vector-valued object, and structural features such as the $S$-matrix are opaque until the full vector is revealed. 

It is therefore natural to seek a vector-valued enhancement of the modular resurgence paradigm. Indeed, a compelling motivation arises from examples that escape the scalar definition but get accommodated naturally in a vector-valued framework. 
\emph{Eichler integrals of unary theta series} ~\cite{eichler1957verallgemeinerung, shimura1959integrales, LawrenceZagier99} are a family of $q$-series featured frequently in the study of vertex algebras, quantum topology, and quantum field theories. They are building blocks of the Witten--Reshetikhin--Turaev invariants of certain Seifert-fibred three-manifolds~\cite{LawrenceZagier99, HIKAMI_2005, Gukov:2017kmk, Cheng:2018vpl, Cheng:2024vou}, they appear in the colored Jones polynomials of alternating knots~\cite{Garoufalidis:2011np, BeirneOsburn}, and they enter the characters of modules of the $(1,N)$- and $(N,N')$-singlet vertex algebras~\cite{Adamovic:2007er, BringmannMilas15}. Clarifying their modular resurgent structure has therefore potential consequences for non-perturbative Chern--Simons theory with complex gauge group, for resurgent invariants of three- and four-manifolds, and for the interplay between mock modularity, defects, and quantum topology. As $q$-series with $q=\re^{2\pi\ri\tau}$, they are given by 
\be \label{eq:EI-intro}
{\sf EI}[\theta^{(\nu)}_{N,k}](\tau)=\sum_{n\in\ZZ}\text{sgn}(n)\,n^{1-\nu}\,\delta_{k,n\,(2N)}\,q^{\frac{n^2}{4N}}\,,\quad N \in \ZZ, \; k\in\IZ/2N\,,
\ee
where $\delta_{k,n\,(2N)}$ is the congruence delta function and $\nu\in\{0,1\}$. They transform as vector-valued holomorphic quantum modular forms of half-integer weight under $\mathsf{SL}_2(\IZ)$~\cite{BR, GO}, and their resurgent structure inherits this vector-valued nature: the perturbative coefficients of ${\sf EI}[\theta^{(\nu)}_{N,k}]$ can be expressed as linear combinations across the index $k$, and no individual component admits an isolated modular resurgent description in the sense of~\cite{FR24-2}. 
They, however, fit into the vector-valued enhancement of modular resurgence introduced in this paper. Moreover, as we show, so does the Eichler integral of \emph{any} vector-valued modular form of weight $1/2$ or $3/2$, the unary theta series providing the fully worked example. The resurgence of Eichler integrals of unary theta series has been recently investigated from several complementary directions, mostly focused on how the false theta function is extended into the lower-half plane~\cite{Costin:2023kla, Costin:Mock26, adams2025orientationreversalchernsimonsnatural, GO, BR, CCR-ongoing, CCR-ongoing2}; we point out the relation to our results in Section~\ref{sec:eichler-setting}.

A second family of natural examples is built from vectors of $q$-Pochhammer symbols. The $q$-series
\be \label{eq:fk-gk-intro}
f_{N,k}(\tau):=\log(\zeta_N^k;q)_\infty\,,\quad g_{N,k}(\tau):=\log(q^k;q^N)_\infty\,,\quad N\in\IZ_{\geq2},\; k\in G_N\,,
\ee
with $\zeta_N=\re^{2\pi\ri/N}$ and $q=\re^{2\pi\ri\tau}$, were studied in~\cite{FR25} from the perspective of (scalar) modular resurgence. Their asymptotic expansions as $\tau\to0^+$ exhibit a tower of Borel-plane simple poles, but their Stokes constants are not coefficients of a single $L$-function: their Dirichlet series decompose as linear combinations of products of Dirichlet $L$-functions and the Riemann zeta function. While it is possible for the scalar framework to accommodate them after a change of basis, the vector-valued framework accommodates them, albeit in a somewhat trivial way, in its native natural basis.  

\paragraph{Results.} In this paper, we introduce the notion of a \emph{vector-valued modular resurgent series} (Definition~\ref{def:vv-MRS}): a vector $(\tfrakg_1,\ldots,\tfrakg_r)$ of Gevrey-1 asymptotic series whose components share a single tower of Borel-plane singularities, have trivial secondary resurgent series, and whose Stokes constants generate a vector of Dirichlet series $(\CL_1,\ldots,\CL_r)$ with polynomial-growth coefficients admitting meromorphic continuation through a linear decomposition in a common set of $L$-functions.
We extend the modular resurgence paradigm to this setting, where companion vectors of $q$-series with modular resurgent asymptotics are paired by the cocycle action of the modular $S$-matrix and with the functional equation of a single $L$-function now replaced by the meromorphic continuation of a vector of Dirichlet series.
We conjecture that a vector-valued modular resurgent expansion encodes two non-trivial properties of the corresponding original vector of $q$-series from the paradigm. The first is \emph{summability}: under a mild hypothesis on the Mellin transforms of the components, each $q$-series is reconstructed by the median resummation of its asymptotic expansion (Conjecture~\ref{conj:vvMR-conj1}). The second is \emph{quantum modularity}: the vector of $q$-series is a holomorphic vector-valued quantum modular form for a subgroup $\Gamma\subseteq\mathsf{SL}_2(\IZ)$ with multiplier $\Om:\Gamma\to\mathsf{GL}_r(\CC)$ (Conjecture~\ref{conj:vvMR-conj2}).

We then establish the paradigm for the two families of examples described above: for the vectors of $q$-Pochhammer symbols of Section~\ref{sec: qPochh-vv}, both conjectures are verified building on the scalar results of~\cite{FR25}; for the Eichler integrals, we prove both conjectures not only for unary theta series (Section~\ref{sec:eichler}) but for the Eichler integral of an arbitrary vector-valued cusp form of weight $1/2$ or $3/2$ for $\mathsf{Mp}_2(\IZ)$ (Section~\ref{sec:beyond-unary}), of which the unary case is the fully worked example. A central structural observation regarding our vector-valued framework is the following. The behaviour of a vector-valued MRS is governed by the modular group representation $\Om: \Gamma \to \mathsf{GL}_r(\CC)$ in Conjecture~\ref{conj:vvMR-conj2}. When $\Om$ is trivial, the vector with the modular resurgent structure of Definition~\ref{def:vv-MRS} can be diagonalised, with each component being an independent (scalar) MRS satisfying the original scalar paradigm of~\cite{FR24-2}, and the vector formalism becomes merely a convenient re-packaging of scalar results. When $\Om$ is non-trivial, no rotation diagonalises the modular action, and the vector-valued framework is genuinely essential. Our two examples illustrate these two scenarios: $q$-Pochhammer symbols (Section~\ref{sec: qPochh-vv}) have trivial multiplier and recast the scalar results of~\cite{FR25} in the new language; Eichler integrals of unary theta series (Section~\ref{sec:eichler}) are quintessential vector-valued quantum modular forms transforming with non-trivial multipliers and constitute the detailed worked example of the general weight-$1/2$ and $3/2$ results of Section~\ref{sec:beyond-unary}.
In the latter case, the paradigm closes in an interesting way: the matrix intertwining the pairing of asymptotic and resurgent data coincides with the modular $S$-matrix of the underlying unary theta series, and the meromorphic continuation of the corresponding Dirichlet vector is induced by classical modularity. In this sense, vector-valued modular resurgence makes manifest the role of modular transformations as the organising principle behind the resurgent structure.

In a slightly different direction, we show in Section~\ref{sec:bimodular-new} that the bimodular completions of Eichler integrals in the sense of Bringmann--Nazaroglu~\cite{Bringmann2019} exhibit a fundamentally different resurgent behaviour; the poles disappear upon the introduction of the second modular variable due to cancellations of the residues, reflecting the fact that these functions, defined on $\IH\times\IH$, display modular symmetry under the diagonal modular transformations. 

\paragraph{Organisation.} Section~\ref{sec:mod-res-vect} reviews the theories of resurgence and quantum modularity, recalls the scalar modular resurgence framework of~\cite{FR24-2}, and introduces its vector-valued enhancement, including the precise statements of the paradigm and conjectures above. Section~\ref{sec: qPochh-vv} revisits and augments the example of $q$-Pochhammer symbols, exhibiting its vector-valued modular resurgent behaviour. 
Section~\ref{sec:eichler} develops the example of Eichler integrals of unary theta series in detail: we compute their resurgent and quantum modular features, verify the paradigm and conjectures of Section~\ref{sec:mod-res-vect}, discuss bimodular completions, and outline the extension to Eichler integrals of general vector-valued modular forms of weight $1/2$ and $3/2$ (Section~\ref{sec:beyond-unary}). 
Conclusions and open problems are collected in Section~\ref{sec:conclusions}.

\section{Modular resurgence for vectors}\label{sec:mod-res-vect}

In this section, we review and extend the framework of modular resurgence initially presented by the last two authors in~\cite{FR24-2}. 
First, we recall the key notions of the theory of resurgence of \'Ecalle~\cite{EcalleI, diver-book} and of the theory of quantum modular forms of Zagier~\cite{ZagierI, ZagierII} and outline how they enter the original program of modular resurgence of~\cite{FR24-2}. 
Then, we introduce and develop a generalization of modular resurgence to the vector-valued setting. In particular, vector-valued modular resurgent series are shown to satisfy a
variation of the modular resurgence paradigm and conjectures.

\subsection{Resurgence of asymptotic series}

Let $\tau$ be a formal variable and  
\be \label{eq: phi}
\phi(\tau) = \sum_{n=0}^{\infty} a_n \tau^{n+1} \in \CC[\![\tau]\!] 
\ee
a Gevrey-1 asymptotic series, namely such that its coefficients are bounded by
\be
|a_n|\le \CA^{-n} n! \, , \quad n \geq 0 \, ,
\ee
for some constant $\CA \in \IR_{>0}$. The Borel transform of $\phi$, which is defined as\footnote{Alternative conventions for the Borel transform of a Gevrey-1 asymptotic series appear in the literature, \emph{e.g.}, it is sometimes useful to adopt the definition $\mathcal{B}[\phi](\zeta) = \sum_{n=0}^\infty \frac{a_n}{\Gamma(n+2)} \zeta^{n+1}$. }
\be \label{eq: phihat}
\borel[\phi](\zeta) := \sum_{n=0}^{\infty} \frac{a_n}{\Gamma(n+1)} \zeta^{n} \in  \CC\{\zeta\} \, ,
\ee
is a convergent power series in a disc centred at $\zeta=0$ of radius $\CA$ in the complex plane of the new formal variable $\zeta$, known as Borel plane. When $\borel[\phi](\zeta)$ can be analytically continued to the whole Borel plane to a (possibly multi-valued) function, we say that the asymptotic series $\phi$ is resurgent~\cite{EcalleI}. 
\begin{dfn}
    A Gevrey-1 asymptotic series $\phi$ is \emph{resurgent} if its Borel transform can be endlessly analytically continued. Namely, for every $L>0$, there is a finite set of points $\OmBor_L \subset \CC$ such that $\borel[\phi](\zeta)$ can be analytically continued along any path of length at most $L$ that starts from a point $\zeta=\eta_0$ with $|\eta_0|<\CA$ and avoids $\OmBor_L$. The set of singularities of $\borel[\phi](\zeta)$ is then $\OmBor:=\bigcup_{L>0}\OmBor_L \subset \CC$. If, additionally, the Borel transform has only simple poles and logarithmic branch points, then $\phi$ is \emph{simple resurgent}.
\end{dfn}

Assume that the asymptotic series in Eq.~\eqref{eq: phi} is simple resurgent and fix an angle $0 \le \theta < 2 \pi$. If the analytically continued Borel transform of $\phi$ grows at most exponentially in an open sector containing the ray $\mathcal{C}_\theta = \re^{\ri \theta} \, \IR_{\ge 0}$, then its Laplace transform at angle $\theta$ 
\be \label{eq: Laplace}
s_{\theta}[\phi](\tau) := \int_{\mathcal{C}_\theta} \re^{-\zeta/\tau} \borel[\phi](\zeta) \, d \zeta 
\ee
is a well-defined analytic function in the half-plane $\Re(\re^{-\ri\theta} \tau)>0$ whose asymptotic expansion at $\tau=0$ reproduces the original formal power series $\phi(\tau)$. We call $s_{\theta}[\phi]$ the Borel--Laplace sum of $\phi$ at angle $\theta$. 
Note that it is discontinuous across the rays corresponding to
\be
\arg(\tau)=\arg(\rho) \, , \quad \rho \in \OmBor \, ,
\ee
where the points $\rho \in \OmBor$ are the locations of the singularities of its Borel transform in the Borel plane.
In particular, the discontinuity across a given ray $\mathcal{C}_{\theta}$ is defined as
\be \label{eq: disc}
\mathrm{disc}_{\theta}[\phi](\tau) := s_{\theta_+}[\phi](\tau) - s_{\theta_-}[\phi](\tau) = \int_{\mathcal{C}_{\theta_+} - \, \mathcal{C}_{\theta_-}} \re^{-\zeta/\tau} \borel[\phi](\zeta) \,  d \zeta \, , 
\ee
where $\theta_{\pm}= \theta \pm \epsilon$ for a fixed $0 < \epsilon \ll 1$.

By Cauchy's integral theorem, the above contour integral reduces to a sum of exponentially small contributions, one from each singularity of the Borel transform in Eq.~\eqref{eq: phihat} with argument $\theta$. Specifically, when the singularity at $\zeta=\rho$ is a simple pole, the local expansion of $\borel[\phi](\zeta)$ around it has the form
\be \label{eq: Stokes0}
\borel[\phi](\zeta) = - \frac{S_{\rho}}{2 \pi \ri (\zeta - \rho)} + \text{regular in $(\zeta-\rho)$} \, ,
\ee
which defines the corresponding \emph{Stokes constant} $S_\rho \in \CC$ and contributes to the integral in Eq.~\eqref{eq: disc} by the exponentially suppressed term $S_\rho \, \re^{-\rho/\tau}$. Summing these contributions, when $\borel[\phi]$ has only simple poles along $\mathcal{C}_\theta$, we obtain
\be \label{eq: Stokes1-poles}
\mathrm{disc}_{\theta}[\phi](\tau) = \sum_{\rho  \in \OmBor_{\theta}} S_{\rho} \, \re^{-\rho/\tau}  \, ,
\ee
where $\OmBor_{\theta} = \{\rho \in \OmBor : \arg(\rho) = \theta\}$ collects the singularities along the given ray.
More generally, when the singularity at $\zeta=\rho$ is not a simple pole, the local expansion of the Borel transform around it produces more than just a constant $S_\rho$---typically, a whole convergent power series in $(\zeta-\rho)$. The Borel--Laplace sum of the inverse Borel transform of the latter then multiplies the factor $\re^{-\rho/\tau}$ in the discontinuity, which leads to the so-called trans-series of $\phi$~\cite{diver-book,M-notes}.

\paragraph{Median resummation.} Sometimes, a Gevrey-1 asymptotic series $\phi$ as in Eq.~\eqref{eq: phi} is produced by perturbatively expanding a known holomorphic function in a given limit. One might then inquire whether the Borel--Laplace sum of $\phi$ for a fixed $0 \le \theta < 2 \pi$ effectively reconstructs the original function and,  if not, which summability technique does. In this work, we will often consider the median resummation of $\phi$ at angle $\theta$, that is, 
\be \label{eq: median}
\mathcal{S}^{\mathrm{med}}_{\theta}[\phi](\tau) := \frac{s_{\theta_+}[\phi](\tau) + s_{\theta_-}[\phi](\tau)}{2} \, ,
\ee
which is a well-defined analytic function in the half-plane $\Re(\re^{-\ri\theta} \tau)>0$ whose asymptotic expansion at $\tau=0$ reproduces $\phi(\tau)$. 
Note that this can be equivalently expressed as
\be \label{eq: median2}
\mathcal{S}^{\mathrm{med}}_{\theta}[\phi](\tau) = s_{\theta_-}[\phi](\tau)+\tfrac{1}{2}\,\mathrm{disc}_{\theta}[\phi](\tau) = s_{\theta_+}[\phi](\tau)-\tfrac{1}{2}\,\mathrm{disc}_{\theta}[\phi](\tau) \, , 
\ee
highlighting the dependence on the non-perturbative contributions given by the discontinuities.

\subsection{Quantum modular forms}\label{sec:QMF-review}

A (classical) modular form is a holomorphic function defined on the complex upper half-plane that is invariant under the action of $\mathsf{SL}_2(\IZ)$ up to a simple automorphy factor. More precisely, consider a holomorphic function $f\colon\IH\to\CC$ and an integer $\omega$. For every $\gamma=\left( \begin{smallmatrix}
    a & b\\
    c & d
\end{smallmatrix} \right)\in\mathsf{SL}_2(\IZ)$, we define the difference function $h_\gamma[{f}]\colon\IH\to\CC$ to be 
\begin{equation} \label{eq: diff-funct}
    h_\gamma[{f}](\tau):= (c \tau+d)^{-\omega}\,f\left(\gamma \tau \right) - f(\tau) \, ,
\end{equation}
where $\gamma\tau:=\tfrac{a\tau+b}{c\tau+d}$ and $(c \tau+d)^{-\omega}$ is the inverse of the automorphy factor.\footnote{The difference function $h_\gamma[{f}]$ in Eq.~\eqref{eq: diff-funct} is the rank-$1$, trivial-multiplier specialization of the slash operator in Eq.~\eqref{eq: cocycle} and thus inherits the additive cocycle identity in Eq.~\eqref{eq: slash-cocycle}.} 
If $h_\gamma[{f}](\tau)=0$ for all $\tau \in \IH$ and all $\gamma \in \mathsf{SL}_2(\IZ)$, then the function $f$ is a  modular form\footnote{Note that $-I \in \mathsf{SL}_2(\IZ)$ forces $f \equiv 0$ for odd $\omega$.} of weight $\omega$. 

Numerous extensions and variations of the notion of modular form have been studied.
For instance, one might assume the holomorphic function $f$ to be vector-valued, consider a finite-index subgroup $\Gamma \subseteq \mathsf{SL}_2(\IZ)$ and a half-integral weight, and appropriately generalize the original automorphy factor to include a multiplier system.
Specifically, we define an automorphy factor of rank $r \in \IZ_{>0}$ and weight $\omega \in\frac{1}{2}\IZ$ for a subgroup $\Gamma\subseteq\mathsf{SL}_2(\IZ)$ to be a function ${\bf j}\colon\IH \times \Gamma\to\mathsf{GL}_r(\CC)$ of the form 
\be \label{eq: j}
{\bf j}(\tau; \gamma):=(c \tau+d)^{\omega} \, \Om(\gamma) \, , \quad \forall \gamma=\left( \begin{smallmatrix}
    a & b\\
    c & d
\end{smallmatrix} \right) \in\Gamma \, , 
\ee
where we choose the principal branch of $(c\tau+d)^{\omega}$ and $\Om$ is an $r$-dimensional multiplier system for $\Gamma$, that is, a map $\Om \colon \Gamma \to \mathsf{GL}_r(\CC)$ for which the function ${\bf j}$ satisfies the cocycle identity\footnote{Equivalently, the map $\Om$ is such that the slash operator $|_\omega$ defined in Eq.~\eqref{eq: slash} is a right action.}
\be \label{eq: cocycle-id}
{\bf j}(\tau; \gamma \gamma') = {\bf j}(\gamma' \tau; \gamma)\, {\bf j}(\tau; \gamma') \, , \quad \forall \gamma, \gamma' \in \Gamma \, .
\ee
For integral $\omega$, the scalar factor $(c\tau+d)^\omega$ is itself a cocycle, and the map $\Om$ is a group homomorphism; for half-integral $\omega$, $(c\tau+d)^\omega$ is a cocycle only up to sign, and $\Om$ is in general a projective representation of $\Gamma$, lifting to a genuine representation $\widetilde{\Om}$ of the preimage $\widetilde{\Gamma}$ of $\Gamma$ in the metaplectic double cover $\mathsf{Mp}_2(\IZ)$.

It is often convenient to repackage the automorphic factor into a rank-$r$ weight-$\omega$ \emph{slash operator}, that is, a right action of $\Gamma$ on vector-valued holomorphic functions ${\bf f}\colon\IH\to\CC^r$ defined by
\be \label{eq: slash}
({\bf f}|_\omega\gamma)(\tau):=(c\tau+d)^{-\omega}\,\Om(\gamma)^{-1}\,{\bf f}(\gamma\tau)\,, \quad \forall \gamma=\left( \begin{smallmatrix}
    a & b\\
    c & d
\end{smallmatrix} \right) \in\Gamma \, ,
\ee
where $\Om$ is the same multiplier system in Eq.~\eqref{eq: j} and we again choose the principal branch of $(c\tau+d)^{-\omega}$. 
For every $\gamma=\left( \begin{smallmatrix}
    a & b\\
    c & d
\end{smallmatrix} \right)\in\Gamma$, we define the \emph{cochain}
$h_\gamma[{\bf f}] \colon \IH \to \CC^r$ via the slash operator $|_\omega$ in Eq.~\eqref{eq: slash} as
\begin{equation} \label{eq: cocycle}
    h_\gamma[{\bf f}](\tau) := ({\bf f}|_\omega\gamma)(\tau) - {\bf f}(\tau) \,,
\end{equation}
which satisfies the additive cocycle identity
\begin{equation} \label{eq: slash-cocycle}
    h_{\gamma\gamma'} = h_\gamma|_\omega\gamma' + h_{\gamma'}\,, \quad \forall \gamma, \gamma' \in \Gamma \, ,
\end{equation}
where, by slight abuse of notation, we denote $h_\gamma = h_\gamma[{\bf f}]$.
If $h_\gamma[{\bf f}] = 0$ for all $\gamma \in \Gamma$, or equivalently ${\bf f}|_\omega\gamma = {\bf f}$, then the function ${\bf f}$ is a vector-valued weight-$\omega$ modular form for $\Gamma$.

Failure of the above transformation equation can, however, occur in a somewhat controlled way. In fact, a remarkable class of modular-type objects is obtained by imposing that the difference function in Eq.~\eqref{eq: cocycle} has strong analyticity properties for all $\gamma \in \Gamma$, although not requiring it to be identically zero~\cite{ZagierI, ZagierII}. 
\begin{dfn} \label{def: vv-QMF}
    A holomorphic function ${\bf f}\colon\IH\to\CC^r$ is a \emph{vector-valued (holomorphic) quantum modular form} for a subgroup $\Gamma\subseteq\mathsf{SL}_2(\IZ)$ of weight $\omega\in\frac{1}{2}\IZ$ and multiplier $\Om \colon \Gamma \to \mathsf{GL}_r(\CC)$ if the function $h_\gamma[{\bf f}]\colon\IH\to\CC^r$ in Eq.~\eqref{eq: cocycle} extends holomorphically to\footnote{Note that $\CC_\gamma = \CC \smallsetminus \left(-\infty; \, -d/c\right]$ when $c>0$ and $\CC_\gamma = \CC \smallsetminus \left[-d/c; \, +\infty \right)$ when $c<0$. Requiring a holomorphic extension to $\CC_\gamma$ therefore amounts to demanding that $h_\gamma[{\bf f}]$ continue analytically across the open interval $\CC_\gamma\cap\IR$ into the lower half-plane.}
\be \label{eq: C_gamma}
\CC_\gamma:=\{\tau\in\CC\colon c \tau+d\in\CC\smallsetminus\IR_{\leq 0}\} \, , \quad \forall \gamma=\left( \begin{smallmatrix}
    a & b\\
    c & d
\end{smallmatrix} \right) \in\Gamma \, .
\ee
\end{dfn}

We note that the additive cocycle property in Eq.~\eqref{eq: slash-cocycle} implies that the holomorphy of $h_\gamma$ required in Definition~\ref{def: vv-QMF} need only be verified on a finite generating set $\{\gamma_i\}_{i \in I}$ of $\Gamma$. Besides, Definition~\ref{def: vv-QMF} can be written analogously for a holomorphic function ${\bf f}\colon\IH_{-}\to\CC^r$, where $\IH_{-}$ denotes the complex lower half-plane. When ${\bf f}$ is scalar-valued, that is, $r=1$ and $\Om$ is trivial, we refer to it as a (holomorphic) quantum modular form. Quantum modular forms of weight zero are also known as quantum modular functions.

\subsection{A primer on modular resurgence}\label{sec: primer}

A complete rigorous classification of quantum modular forms remains elusive. Nevertheless, when such forms exhibit a factorially divergent asymptotic expansion, progress can be made by applying the techniques of resurgence. Along these lines, the last two authors developed the framework of \emph{modular resurgence} in~\cite{FR24-2}. Before describing it, we introduce one last ingredient. 
\begin{dfn} \label{def: Lfunct}
A \emph{Dirichlet series} is a formal series
\begin{equation}\label{def:L-funct-gen}
L(s)=\sum_{m=1}^\infty\frac{A_m}{m^s} \, , 
\end{equation}
where $s$ is a complex variable and $\{A_m\}_{m \in \IZ_{>0}}$ is a sequence of complex numbers. If the series converges absolutely in a right half-plane $\{\Re(s)>\alpha\} \subset \CC$ and admits an Euler product expansion in that region, it is called an \emph{$L$-series}. If, moreover, it admits a meromorphic continuation to a larger domain containing $\{\Re(s)<0\} \subset \CC$, it defines an \emph{$L$-function}.
\end{dfn}

A remarkable class of resurgent asymptotic series, linked to $L$-functions and quantum modular forms through distinctive analytic and arithmetic properties, was identified and studied in~\cite{FR24-2}, where the notion of \emph{modular resurgent series} (MRS) was introduced.
\begin{dfn}[Def.~3.2 in~\cite{FR24-2}]\label{def:modular_res_struct}
A Gevrey-1 asymptotic series $\tfrakg \in \CC \llbracket \tau \rrbracket$ has a \emph{modular resurgent structure} if the following conditions hold.
\begin{enumerate}
    \item The Borel transform $\CB[\tfrakg](\zeta) \in \CC\{\zeta\}$ has a tower of singularities at the locations $\rho_m= \CA m$, $m\in\mathbb{Z}_{\ne 0}$, for some constant $\CA \in \CC$, in Borel plane.
    \item For every $m\in\IZ_{\ne 0}$, the resurgent series at the singularity $\rho_m$ is the constant function $S_m\in\CC$, \emph{i.e.}, the Stokes constant.
    \item The Stokes constants $S_m$, $m\in\IZ_{\ne 0}$, are the coefficients of the $L$-functions
    \be \label{eq: L-pm}
		L_{+}(s)=\sum_{m >0} \frac{S_{m}}{m^s}  \quad \text{ and } \quad L_{-}(s)=-\sum_{m >0} \frac{S_{- m}}{m^s} \, .
     \ee
\end{enumerate}
A Gevrey-1 asymptotic series with a modular resurgent structure is a \emph{modular resurgent series}. 
\end{dfn} 
Apart from the global constant $\CA$ measuring the distance between consecutive singularities in the tower, the MRS in Definition~\ref{def:modular_res_struct} can be alternatively characterized by the $q$-series
\begin{equation} \label{eq:f_Am}
\frakf(\tau)=\begin{cases}
\displaystyle\sum_{m>0}S_m q^m & \mbox{if} \quad \Im(\tau)>0 \,   \\ \\
-\displaystyle\sum_{m>0}S_{-m} q^{-m} & \mbox{if} \quad \Im(\tau)<0 \,  
\end{cases} \, , \quad q= \re^{2\pi \ri \tau} \, , \quad \tau \in \CC\smallsetminus\IR \, ,
\end{equation} 
which is the generating series of the Stokes constants.
In fact, there is a canonical correspondence between the $L$-functions $L_\pm$ and the $q$-series $\frakf$ via the Mellin transform and its inverse. Namely,
\be \label{eq: mellin-formula}
    L_{\pm}(s)= \frac{(2\pi)^{s}}{\Gamma(s)} \int_0^\infty t^{s-1}\, \frakf(\pm \ri t) \, dt \, , \quad \Re(s)>\mbox{max}(0, \alpha) \, ,
\ee
where $\alpha_\pm$ determines the right half-plane of absolute convergence of $L_{\pm}$ and $\alpha=\mbox{max}( \alpha_+,\alpha_-)$\footnote{In all examples studied in this paper, we have $\alpha_+=\alpha_-=\alpha$.}  (cf. Definition \ref{def: Lfunct}), while
\begin{equation} \label{eq: invmellin-formula}
\frakf(\pm \ri t)= \frac{1}{2\pi \ri} \int_{C-\ri\infty}^{C+\ri \infty} (2\pi t)^{-s} \Gamma(s)\, L_{\pm}(s)\, ds   \, , \quad t >0 \, , 
\end{equation}
for fixed $C>\mbox{max}(0, \alpha)$~\cite[Prop.~3.3]{FR24-2}.
Let us now denote by
\be
\tfrakf_\pm(\tau)=\sum_{n=0}^\infty b^{(\pm)}_n \tau^n \in \CC [\![\tau]\!]
\ee
the asymptotic expansions of $\frakf(\tau)$ as $\tau \to 0$ with $\Im(\tau)>0$ and $\Im(\tau)<0$, respectively, and introduce the new asymptotic series
\be \label{eq: def-tfrakf}
\tfrakf(\tau):=\tfrakf_+(\tau)+\tfrakf_-(-\tau) = \sum_{n=0}^\infty b_n \tau^n\, . 
\ee
Then, the perturbative coefficients $b_n$, $n \in \IZ_{\ge 0}$, are given by 
\be \label{eq: coeff-bn-1}
b_n=\frac{(2\pi \ri)^n}{n!} \left(L_+(-n) + (-1)^n L_-(-n)\right) \, ,
\ee
where $L_\pm(-n)$ are defined via meromorphic continuation of the corresponding $L$-functions $L_{\pm}(s)$ to $\{\Re (s)< 0\} \subset \CC$~\cite[Prop.~3.4]{FR24-2}.
Note that the functional equation for $L_{\pm}$ is generally expressed in terms of a (possibly different) pair of $L$-functions $L'_{\pm}(s)$ with $\{\Re(s)>\beta \} \subset \CC$.

Finally, the discontinuities of $\tfrakg$ are given by a Fricke-type transformation of the same $q$-series $\frakf$ in Eq.~\eqref{eq:f_Am} after a suitable change of variable. 
More precisely,  
\begin{subequations} \label{eq: disc-g}
\begin{align}
\mathrm{disc}_{\theta} [\tfrakg](\tau) &= \sum_{m=1}^{\infty} S_m \, \re^{-\CA m /\tau} = \frakf\left(-\tfrac{\CA}{2 \pi \ri \tau}\right)   \, , \quad \tau\in\IH_+ \cap \{\Re ( \re^{-\ri\theta} \tau)>0\} \, ,   \label{eq: disc-up-g}\\
\mathrm{disc}_{\theta+\pi} [\tfrakg] (\tau) &=\sum_{m=1}^{\infty} S_{-m} \, \re^{\CA m /\tau} = -\frakf\left(-\tfrac{\CA}{2 \pi \ri \tau}\right)   \, , \quad \tau\in\IH_- \cap \{\Re ( \re^{-\ri\theta} \tau)>0\} \, , \label{eq: disc-down-g}
\end{align}
\end{subequations}
where $0 \le \theta < \pi$ is the argument of the singularities of $\borel[\tfrakg]$ in the upper half-plane~\cite[Sec.~3.2]{FR24-2}.

\subsubsection*{The paradigm} 

Definition~\ref{def:modular_res_struct} constrains a \emph{formal} power series $\tfrakg$ by fixing the singularity structure of the Borel transform and identifying the Stokes constants as the coefficients of $L$-functions. It does not, on its own, assert that $\tfrakg$ is the asymptotic expansion of an actual function. When it is, the MRS acquires a much richer structure, which we now describe.

The content of Definition~\ref{def:modular_res_struct} already supplies a canonical candidate. The functional equation for $L_\pm$ expresses their meromorphic continuation through a (possibly different) pair of $L$-functions $L'_\pm$, introduced above after Eq.~\eqref{eq: coeff-bn-1}. Since the coefficients of an $L$-function grow at most polynomially, the inverse Mellin transform of $L'_\pm$ converges and defines a holomorphic $q$-series
\begin{equation} \label{eq:g_Am}
\frakg(\tau)=\begin{cases}
\displaystyle\sum_{m>0}R_m q^m & \mbox{if} \quad \Im(\tau)>0 \,   \\ \\
-\displaystyle\sum_{m>0}R_{-m} q^{-m} & \mbox{if} \quad \Im(\tau)<0 \,  
\end{cases} \, , \quad q= \re^{2\pi \ri \tau} \, , \quad \tau \in \CC\smallsetminus\IR \, ,
\end{equation} 
with coefficients $R_m \in \CC$. Namely, the $L$-functions
\be \label{eq: L2-pm}
    L'_{+}(s)=\sum_{m >0} \frac{R_{m}}{m^s} \quad \text{ and } \quad L'_{-}(s)=-\sum_{m >0} \frac{R_{- m}}{m^s} 
\ee
are related to $\frakg$ by the Mellin formulae in Eqs.~\eqref{eq: mellin-formula} and~\eqref{eq: invmellin-formula}. Let us denote by
\be
\tfrakg_\pm(\tau)=\sum_{n=0}^\infty a^{(\pm)}_n \tau^n \in \CC [\![\tau]\!]
\ee
the asymptotic expansions of $\frakg(\tau)$ as $\tau \to 0$ with $\Im(\tau)>0$ and $\Im(\tau)<0$, respectively.

Assume that the MRS $\tfrakg$ of Definition~\ref{def:modular_res_struct} is recovered from these asymptotic expansions as
\be \label{eq: def-tfrakg}
\tfrakg(\tau)=\tfrakg_+(\tau)+\tfrakg_-(-\tau) = \sum_{n=0}^\infty a_n \tau^n \, ,
\ee
and that the Borel transform $\borel[\tfrakg]$ has only simple poles.
When this is the case, the perturbative coefficients $a_n$, $n \in \IZ_{\ge 0}$, are given by
\be \label{eq: coeff-an-2}
a_n=\frac{(2\pi \ri)^n}{n!} \left(L'_+(-n) + (-1)^n L'_-(-n)\right) \, ,
\ee
and two notable consequences follow, describing properties of $(\tfrakf, \frakg, L'_\pm)$ analogous to the ones established in the previous section for $(\tfrakg, \frakf, L_\pm)$.
\begin{itemize}
    \item[1.] The asymptotic series $\tfrakf$ in Eq.~\eqref{eq: def-tfrakf} is an MRS whose Borel transform has only simple poles at the locations $\eta_m=\CA' m$, $m \in \IZ_{\ne 0}$, for some constant $\CA' \in \CC$, and with Stokes constants $R_m$. This is the dual of the structure in Eqs.~\eqref{eq:f_Am}--\eqref{eq: coeff-bn-1}, obtained by applying the construction of~\cite[Sec.~3.1]{FR24-2} to $L'_\pm$ in place of $L_\pm$.
    \item[2.] The discontinuities of $\tfrakf$ are given by the same $q$-series $\frakg$ in Eq.~\eqref{eq:g_Am} after a Fricke-type transformation, as in Eqs.~\eqref{eq: disc-up-g} and~\eqref{eq: disc-down-g}. Namely,
    \begin{subequations} \label{eq: disc-f}
    \begin{align}
        \mathrm{disc}_{\theta'} [\tfrakf](\tau) &= \sum_{m=1}^{\infty} R_m \, \re^{-\CA' m /\tau} = \frakg\left(-\tfrac{\CA'}{2 \pi \ri \tau}\right)   \, , \quad \tau\in\IH_+ \cap \{\Re ( \re^{-\ri\theta'} \tau)>0\} \, ,   \label{eq: disc-up-f}\\
        \mathrm{disc}_{\theta'+\pi} [\tfrakf] (\tau) &=\sum_{m=1}^{\infty} R_{-m} \, \re^{\CA' m /\tau} = -\frakg\left(-\tfrac{\CA'}{2 \pi \ri \tau}\right)   \, , \quad \tau\in\IH_- \cap \{\Re ( \re^{-\ri\theta'} \tau)>0\} \, , \label{eq: disc-down-f}
    \end{align}
    \end{subequations}
    where $0 \le \theta' < \pi$ is the argument of the singularities of $\borel[\tfrakf]$ in the upper half-plane. This is the dual of Eqs.~\eqref{eq: disc-up-g}--\eqref{eq: disc-down-g}, obtained by applying the discontinuity formulas of~\cite[Cor.~3.5]{FR24-2} to the pair $(\tfrakf, \frakg)$ in place of $(\tfrakg, \frakf)$.
\end{itemize}
The canonical pair of MRSs $\tfrakg$, $\tfrakf$, their generating $q$-series $\frakg$, $\frakf$, and their associated $L$-functions $L_\pm$, $L'_\pm$ fit into the so-called \emph{modular resurgence paradigm}, which is schematically illustrated by the commutative diagram below. We refer to~\cite[Sec.~3.1]{FR24-2} for more details. 
\begin{equation}\label{diag:resurgence-L funct}
\begin{tikzcd}[column sep=2.3em, row sep=2.8em]
       L'_{\pm}(s) \arrow[rr, "\text{Mellin}"', "\text{inverse}"] \arrow[ddrrrrrr,sloped,"\text{functional equation}"] & &  \frakg(\tau) \arrow[rr,"\tau \rightarrow 0"] &  & \tfrakg(\tau) 
       \arrow[ll,bend right=35,red!50,"\CS^{\rm med}_\theta",swap]
               \arrow[dd,gray!50, bend left=35, sloped,
                      "\text{disc}_\theta \, /\, \text{Fricke} \;\;\;"]
                      \arrow[rr,"\text{resurgence}"] & &  \{ S_m \}\arrow[dd,sloped,"\text{$L$-function}"] \\  \\
       \{ R_m \} \arrow[uu,sloped,"\text{$L$-function}"] & & \tfrakf(\tau) 
       \arrow[rr,bend right=35,red!50,"\CS^{\rm med}_\theta",swap]
               \arrow[uu,gray!50, bend left=35, sloped,
                      "\text{disc}_\theta \, /\, \text{Fricke} \;\;\;"]
                      \arrow[ll,"\text{resurgence}"] 
       &  & \frakf(\tau)  \arrow[ll,"\tau \rightarrow 0"] & &  L_{\pm}(s) \arrow[ll, "\text{inverse}"', "\text{Mellin}"] \arrow[uullllll,sloped,swap,"\text{}"]
\end{tikzcd}
\end{equation}

\subsubsection*{The conjectures} 

As alluded to above, an MRS arising from the asymptotic expansion of a $q$-series has additional distinctive features. Most importantly, the $q$-series is expected to be a holomorphic quantum modular form and, if it is the inverse Mellin transform of an $L$-function, to be reconstructed via median resummation.
\begin{con}[Conj.~1 in~\cite{FR24-2}]\label{conj:MR-conj1}
Let $q=\re^{2 \pi \ri \tau}$ and $\frakg\colon\IH\to\CC$ be a $q$-series whose Mellin transform is an $L$-function. If its asymptotic expansion $\tfrakg(\tau)$ as $\tau\to 0$ with $\Im(\tau)>0$ has a modular resurgent structure, then the \emph{median resummation} of $\tfrakg$ reconstructs the original function $\frakg$, that is, 
\be
\mathcal{S}_\theta^{\mathrm{med}}[\tfrakg](\tau)=\frakg(\tau) \, , \quad \tau\in\IH \cap \{\Re ( \re^{-\ri\theta} \tau)>0\} \, ,
\ee
where $\theta$ is the argument of the singularities in the upper half of the Borel plane.
\end{con}
\begin{con}[Conj.~2 in~\cite{FR24-2}]\label{conj:MR-conj2}
Let $q=\re^{2 \pi \ri \tau}$ and $\frakg\colon\IH\to\CC$ be a $q$-series. If its asymptotic expansion $\tfrakg(\tau)$ as $\tau\to 0$ with $\Im(\tau)>0$ has a modular resurgent structure, then $\frakg$ is a {holomorphic quantum modular form} for a subgroup $\Gamma\subseteq\mathsf{SL}_2(\mathbb{Z})$.
\end{con}

Instances of MRSs occur in combinatorics, quantum topology, and topological string theory. Definition~\ref{def:modular_res_struct} and its implications were originally inspired by the asymptotic behaviour of the spectral trace of the toric Calabi--Yau threefold known as local $\IP^2$ and its strong-weak resurgent symmetry~\cite{Rella22, FR24-1}. 
Further evidence of Conjectures~\ref{conj:MR-conj1} and~\ref{conj:MR-conj2} comes from the theory of Maass cusp forms~\cite{FR24-2} and from the study of certain linear combinations of $q$-Pochhammer symbols~\cite{FR25}. 
The latter appear as building blocks of the spectral traces of local weighted projective planes $\IP^{m,n}$ for $m,n \in \IZ_{>0}$. 
As we will further emphasize, the dissimilarities between the case of local $\IP^2$ and the case of local $\IP^{m,n}$ guided us towards recasting modular resurgence in the vector formalism. 

\subsection{Vector-valued modular resurgent structures}\label{sec:vector-valued-MR}

As mentioned in the introduction, conceptually, the ubiquity of vector-valued (quantum) modular forms naturally motivates the introduction of the framework of \emph{vector-valued modular resurgence}. 
More practically, examples that narrowly escape Definition~\ref{def:modular_res_struct} indeed arise in related studies. For instance, an infinite family of MRSs has been constructed by the last two authors in~\cite{FR25} by asymptotically expanding sums of $q$-Pochhammer symbols twisted by Dirichlet characters.
In the same work, it is also shown that the individual $q$-Pochhammer symbols give rise to asymptotic series that meet only some of the requirements of Definition~\ref{def:modular_res_struct}: they display a tower of equally distanced simple poles in the Borel plane, but the Dirichlet series of the Stokes constants are not $L$-functions as they do not satisfy an Euler product expansion over the prime numbers. Nevertheless, as we will show in Section~\ref{sec: qPochh-vv}, they can be explicitly written as appropriate linear combinations of $L$-functions. In the second family of examples studied in this paper (see Section~\ref{sec:eichler}), the linear combination is tied directly to the $S$-matrix of the associated vector-valued quantum modular form.
To generalize the main features of an MRS and accommodate the abovementioned examples, we introduce here the definition of \emph{vector-valued modular resurgent series}.
\begin{dfn}\label{def:vv-MRS}
A vector ${\boldsymbol \tfrakg}=(\tfrakg_1,\ldots,\tfrakg_r)\in\CC^r[\![\tau]\!]$ of Gevrey-$1$ asymptotic series has a \emph{vector-valued modular resurgent structure} if the following conditions hold.
\begin{enumerate}
\item There exists a constant $\CA\in\CC$ such that, for every $k=1,\ldots,r$, the Borel transform $\CB[\tfrakg_k](\zeta) \in \CC\{\zeta\}$ has a tower of singularities at the locations $\rho_m=\CA m$, $m\in\ZZ_{\neq 0}$, in Borel plane.
\item For every $k=1,\ldots,r$ and every $m\in\IZ_{\ne 0}$, the resurgent series at the singularity $\rho_m$ is the constant function $S_{k,m}\in\CC$, \emph{i.e.}, the Stokes constant. We collect the Stokes constants into the vectors $\boldsymbol{S}_m:=(S_{1,m},\ldots,S_{r,m})\in\CC^r$, $m\in\ZZ_{\neq 0}$.
\item There exist vectors of $L$-functions $\boldsymbol{L}_{\pm}=(L_{1,\pm},\ldots,L_{v,\pm})$, vectors $\boldsymbol{c}_{\pm}=(c_{1,\pm},\ldots,c_{v,\pm})\in\CC^v$, and matrices ${\bf M}_{\pm}\in{\rm Mat}_{r\times v}(\CC)$ such that the vectors of Dirichlet series $\boldsymbol{\CL}_{\pm}=(\CL_{1,\pm},\ldots,\CL_{r,\pm})$ with
\begin{equation}\label{eq:dir_ser_def}
\boldsymbol{\calL}_{+}(s)\:=\sum_{m=1}^\infty \frac{\boldsymbol{S}_m}{m^s} \quad \text{ and } \quad \boldsymbol{\calL}_{-}(s)\:=-\sum_{m=1}^\infty \frac{\boldsymbol{S}_{-m}}{m^s}
\end{equation}
admit the linear decomposition
\be \label{eq: def-Lfunct}
\boldsymbol{\calL}_{\pm}(s) \= {\bf M}_{\pm} \, \mathrm{diag}\big(c_{1,\pm}^s,\ldots,c_{v,\pm}^s\big) \, \boldsymbol{L}_{\pm}(s) \, .
\ee
\end{enumerate}
A vector of Gevrey-1 asymptotic series with a vector-valued modular resurgent structure is a \emph{vector-valued MRS}.
\end{dfn}

Notice that the Dirichlet series $\CL_{k, \pm}$ in Eq.~\eqref{eq:dir_ser_def} can be analytically continued to the whole complex plane by means of the linear decomposition in Eq.~\eqref{eq: def-Lfunct} and the functional equations satisfied by the $L$-functions $L_{j, \pm}$.
However, they may not possess an Euler product expansion because taking linear combinations breaks the multiplicativity of the coefficients. This highlights the fact that a vector-valued MRS is typically not simply a vector of (scalar) MRSs, although the opposite is obviously true.

\subsubsection{The paradigm}\label{sec:paradigm}

Crucially, the notion proposed in Definition~\ref{def:vv-MRS} retains the core analytic and arithmetic features of modular resurgence.
Let $\boldsymbol{\tfrakg}\in\CC^r[\![\tau]\!]$ satisfy Definition~\ref{def:vv-MRS} with Stokes vectors $\boldsymbol{S}_m$, $m \in \ZZ_{\ne 0}$, and Dirichlet vectors $\boldsymbol{\calL}_\pm$ admitting the decomposition in Eq.~\eqref{eq: def-Lfunct}. We replicate, in vector form, the constructions reviewed in Section~\ref{sec: primer} under Definition~\ref{def:modular_res_struct}; all Mellin, Borel, asymptotic-expansion, and discontinuity operations below act component-wise.
\begin{itemize}
    \item Let $\boldsymbol{\frakf}(\tau)=(\frakf_1(\tau),\ldots,\frakf_r(\tau))$, $\tau\in\CC\smallsetminus\IR$, denote the vector of generating $q$-series of the Stokes vectors $\{\boldsymbol{S}_m\}$, $m\in\IZ_{\ne0}$, as in Eq.~\eqref{eq:f_Am}, equivalently, the inverse Mellin transform of the Dirichlet vectors $\boldsymbol{\calL}_\pm(s)$ on $\{\Re(s)>\alpha\}\subset\CC$, as in Eq.~\eqref{eq: invmellin-formula}.
    \item Let $\boldsymbol{\tfrakf}(\tau):=\boldsymbol{\tfrakf}_+(\tau)+\boldsymbol{\tfrakf}_-(-\tau)\in\CC^r[\![\tau]\!]$ be the symmetrized vector of asymptotic expansions of $\boldsymbol{\frakf}$ as $\tau\to0$ with $\pm\Im(\tau)>0$, as in Eq.~\eqref{eq: def-tfrakf}. Its vectors of perturbative coefficients $\boldsymbol{b}_n\in\CC^r$, $n\in\IZ_{\ge0}$, satisfy 
    \be
    \boldsymbol{b}_n=\frac{(2\pi\ri)^n}{n!}\big(\boldsymbol{\calL}_+(-n)+(-1)^n\boldsymbol{\calL}_-(-n)\big) \, , 
    \ee
    the vector form of Eq.~\eqref{eq: coeff-bn-1}.
    \item Following Eq.~\eqref{eq: disc-g}, the vector of $q$-series $\boldsymbol{\frakf}$ equals the discontinuity of the vector of asymptotic series $\boldsymbol{\tfrakg}$ in the appropriate variable.
\end{itemize}
Let now $\boldsymbol{\calL}'_\pm(s)$ denote the Dirichlet vectors obtained from the meromorphic continuation of $\boldsymbol{\calL}_\pm(s)$ to $\{\Re(s)<0\}\subset\CC$, with vectors of coefficients $\boldsymbol{R}_m:=(R_{1,m},\ldots,R_{r,m}) \in\CC^r$, $m\in\IZ_{\ne0}$, namely, 
\begin{equation}\label{eq:dir_ser_def-2}
\boldsymbol{\calL}'_{+}(s)\:=\sum_{m=1}^\infty \frac{\boldsymbol{R}_m}{m^s} \quad \text{ and } \quad \boldsymbol{\calL}'_{-}(s)\:=-\sum_{m=1}^\infty \frac{\boldsymbol{R}_{-m}}{m^s} \, , 
\end{equation}
which admit the linear decomposition
\be \label{eq: def-Lfunct-2}
\boldsymbol{\calL}'_\pm(s) \= {\bf M}'_\pm\,\mathrm{diag}\big((c'_{1,\pm})^s,\ldots,(c'_{r,\pm})^s\big)\,\boldsymbol{L}'_\pm(s) \, ,
\ee
where $\boldsymbol{c}'_\pm\in\CC^r$, ${\bf M}'_\pm\in{\rm Mat}_{r\times v}(\CC)$, and $\boldsymbol{L}'_\pm$ is the vector of $L$-functions governing the functional equation for $\boldsymbol{L}_\pm$. Finally, let $\boldsymbol{\frakg}(\tau)=(\frakg_1(\tau),\ldots,\frakg_r(\tau))$, $\tau\in\CC\smallsetminus\IR$, denote the vector of generating $q$-series of the coefficient vectors $\{\boldsymbol{R}_m\}$, $m \in \ZZ_{\ne 0}$, as in Eq.~\eqref{eq:g_Am}, equivalently, the inverse Mellin transform of the Dirichlet vectors $\boldsymbol{\calL}'_\pm(s)$ on $\{\Re(s)>\beta\}\subset\CC$.

Assume that the asymptotic expansions of $\boldsymbol{\frakg}(\tau)$ as $\tau \to 0$ with $\pm \Im(\tau)>0$ reproduce the vector-valued MRS $\boldsymbol{\tfrakg}$ of Definition~\ref{def:vv-MRS} analogously to Eq.~\eqref{eq: def-tfrakg}, so that its vectors of perturbative coefficients $\boldsymbol{a}_n\in\CC^r$, $n \in \ZZ_{\ge 0}$, are given by the $L$-values $\boldsymbol{\calL}'_\pm(-n)$ via the vector analogue of Eq.~\eqref{eq: coeff-an-2}, that is, 
\be
\boldsymbol{a}_n=\frac{(2\pi\ri)^n}{n!}\big(\boldsymbol{\calL}'_+(-n)+(-1)^n\boldsymbol{\calL}'_-(-n)\big) \, .
\ee
Also assume that each Borel transform $\borel[\tfrakg_k]$ has only simple poles.\footnote{This holds for all the examples of Sections~\ref{sec: qPochh-vv} and~\ref{sec:eichler}, where it is verified directly.}
Once more, we generalize the construction of Section~\ref{sec: primer}.
\begin{itemize}
    \item The vector of asymptotic series $\boldsymbol{\tfrakf}$ is a vector-valued MRS: there exists a constant $\CA'\in\CC$ such that, for every $k=1,\ldots,r$, the Borel transform $\borel[\tfrakf_k]$ has only simple poles at the locations $\eta_m=\CA' m$, $m\in\IZ_{\ne0}$, witth Stokes vectors $\boldsymbol{R}_m$ that are the coefficients of the Dirichlet vectors $\boldsymbol{\calL}'_\pm$ admitting the decomposition into $L$-functions in Eq.~\eqref{eq: def-Lfunct-2}.
    \item Following Eq.~\eqref{eq: disc-f}, the vector of $q$-series $\boldsymbol{\frakg}$ equals the discontinuity of the vector of asymptotic series $\boldsymbol{\tfrakf}$ in the appropriate variable.
\end{itemize}
Thus, under the assumptions above and up to the small changes already described, the paradigm of modular resurgence straightforwardly extends to vector-valued MRSs, as in the diagram below.
\begin{equation}\label{diag:vvMR-diagram}
\begin{tikzcd}[column sep=2.3em, row sep=2.8em]
        \boldsymbol{\calL}'_\pm(s) \arrow[rr, "\text{Mellin}"', "\text{inverse}"]
        \arrow[ddrrrrrr,sloped,"\text{meromorphic continuation}"]
        & & \boldsymbol{\frakg}(\tau) \arrow[rr,"\tau \rightarrow 0"]
        & & \boldsymbol{\tfrakg}(\tau) \arrow[rr,"\text{resurgence}"]
               \arrow[ll,bend right=35,red!50,"\CS^{\rm med}_\theta",swap]
               \arrow[dd,gray!50, bend left=35, sloped,
                      "\text{disc}_\theta \, /\, \text{Fricke} \quad \quad"]
        & & \{ \boldsymbol{S}_m \}\arrow[dd,sloped,"\text{Dirichlet series}"] \\ \\
       \{ \boldsymbol{R}_m \} \arrow[uu,sloped,"\text{Dirichlet series}"]
        & & \boldsymbol{\tfrakf}(\tau) \arrow[ll,"\text{resurgence}"]
               \arrow[rr,bend right=35,red!50,"\CS^{\rm med}_\theta",swap]
               \arrow[uu,gray!50, bend left=35, sloped,
                      "\text{disc}_\theta \, /\, \text{Fricke} \quad \quad"]
        & & \boldsymbol{\frakf}(\tau) \arrow[ll,"\tau \rightarrow 0"]
        & & \boldsymbol{\calL}_\pm(s) \arrow{ll}{\text{Mellin}}[swap]{\text{inverse}}
               \arrow[uullllll,sloped,swap,"\text{via  } \boldsymbol{L}_\pm \text{  and  } \boldsymbol{L}'_\pm"]
\end{tikzcd}
\end{equation}
Observe that, as mentioned earlier, the paradigm closes only under the correct choice of $q$-series vector $\boldsymbol{\frakg}$, which is not uniquely determined from the knowledge of its asymptotic series $\boldsymbol{\tfrakg}$ alone; one might begin with a vector of $q$-series whose asymptotic expansions give rise to the vector-valued MRS $\boldsymbol{\tfrakg}$ but whose Mellin transform does not reproduce the Dirichlet vector $\boldsymbol{\calL}'_\pm$.

\subsubsection{The conjectures}\label{sec:conjectures}

When a vector-valued MRS $\tilde{\frakg}$ is constructed from the asymptotic expansion of a vector of $q$-series $\boldsymbol{\frakg}$, as in the description of the paradigm above, further properties hold. The vector $\boldsymbol{\frakg}$ is conjectured to be a vector-valued holomorphic quantum modular form, as in Definition~\ref{def: vv-QMF}; moreover, if $\boldsymbol{\frakg}$ is the (component-wise) inverse Mellin transform of a linear combination of $L$-functions, then the (component-wise) median resummation of $\boldsymbol{\tfrakg}$ reconstructs $\boldsymbol{\frakg}$.
\begin{con}\label{conj:vvMR-conj1}
Let $q=\re^{2\pi\ri\tau}$ and $\boldsymbol{\frakg}\colon\IH\to\CC^r$ be a vector of $q$-series whose Mellin transforms are linear combinations of $L$-functions. If the asymptotic expansion $\boldsymbol{\tfrakg}(\tau)$ as $\tau\to0$ with $\Im(\tau)>0$ is a vector-valued MRS, then the \emph{median resummation} of $\boldsymbol{\tfrakg}$ reconstructs the original vector $\boldsymbol{\frakg}$, that is,
\be
\mathcal{S}_\theta^{\mathrm{med}}[\boldsymbol{\tfrakg}](\tau)\=\boldsymbol{\frakg}(\tau)\,,\quad \tau\in\IH\cap\{\Re(\re^{-\ri\theta}\tau)>0\}\,,
\ee
where $\mathcal{S}_\theta^{\mathrm{med}}$ acts component-wise and $\theta$ is the argument of the singularities in the upper half of the Borel plane.
\end{con}
\begin{con}\label{conj:vvMR-conj2}
Let $q=\re^{2\pi\ri\tau}$ and $\boldsymbol{\frakg}\colon\IH\to\CC^r$ be a vector of $q$-series. If the asymptotic expansion $\boldsymbol{\tfrakg}(\tau)$ as $\tau\to0$ with $\Im(\tau)>0$ is a vector-valued MRS, then $\boldsymbol{\frakg}$ is a \emph{vector-valued holomorphic quantum modular form} for a subgroup $\Gamma\subseteq\mathsf{SL}_2(\mathbb{Z})$ with multiplier $\Om:\Gamma\to\mathsf{GL}_r(\CC)$.
\end{con}
In the following Sections~\ref{sec: qPochh-vv} and~\ref{sec:eichler}, we will present two non-trivial families of examples of vector-valued MRSs and show that they verify our paradigm and conjectures.

\section{The \texorpdfstring{$q$}{q}-Pochhammer examples} \label{sec: qPochh-vv}

Certain weighted sums of $q$-Pochhammer symbols have been shown to produce MRSs and satisfy the paradigm and conjectures of modular resurgence, although the same does not hold for the $q$-Pochhammer symbols themselves~\cite{FR25}. The latter give rise to resurgent asymptotic series that fulfil the analytic criteria (parts~$1$ and~$2$) but not the arithmetic ones (part~$3$) in Definition~\ref{def:modular_res_struct}. Yet, the desired number-theoretic properties can be recovered by adopting the vector formalism of Section~\ref{sec:vector-valued-MR}.
In this section, expanding on~\cite{FR25}, we prove that particular vectors of $q$-Pochhammer symbols give rise to vector-valued MRSs in the sense of Definition~\ref{def:vv-MRS} and provide evidence of the paradigm in Eq.~\eqref{diag:vvMR-diagram} and Conjectures~\ref{conj:vvMR-conj1} and~\ref{conj:vvMR-conj2}. 

Note that the triviality of the modular representation underlying the associated quantum modular forms implies that the vector-valued structure is in fact an artefact of our choice of basis: a Dirichlet-character twist diagonalises the vector-valued MRS into independent scalar MRSs, each tied to a single primitive Dirichlet $L$-function. The vector-valued framework therefore serves here as a convenient repackaging of the scalar results of~\cite{FR25}, with the discrete Fourier transform on the character group playing the role of the diagonalising change of basis. This marks a qualitative difference between this class of examples and those discussed in Section~\ref{sec:eichler}.

\subsection{Setting}

We adopt the notation and conventions of~\cite{FR25}. 
The (infinite) $q$-Pochhammer symbols are functions of two variables defined by the infinite products
\begin{equation} \label{eq: dilog}
(x q^{\alpha}; \, q)_{\infty} \= \prod_{n=0}^{\infty} (1- x q^{\alpha+n}) \, , \quad \alpha \in \RR \, ,
\end{equation}
which are analytic in $x,q \in \CC$ with $|q| <1$.
Let us fix\footnote{We exclude $N=2$ as a degenerate case: $G_2=(\ZZ/2)^\times=\{1\}$ is a fixed point of the involution $k\mapsto N-k$, so the antisymmetrized components introduced in Eq.~\eqref{eq:component-def} vanish identically.} $N\in\ZZ_{\geq 3}$. For all $k\in G_N=(\ZZ/N)^\times$, we introduce the functions $f_{N,k}, g_{N,k}\colon \HH \to \CC$ given by
\begin{equation}\label{eq:f_kN}
    f_{N,k}(\tau):=\log(\zeta_N^k;q)_\infty \, , \quad 
    g_{N,k}(\tau):=\log(q^k;q^N)_\infty \, , 
\end{equation}
where $\zeta_N=\re^{2\pi\ri/N}$ and $q=\re^{2\pi\ri \tau}$. 
We refer to these as $q$-Pochhammer symbols for simplicity.

Let us denote by $\underline{k}=N-k \in G_N$ the image of $k$ under the involution map $k\mapsto N-k$ and introduce the index sets
\be \label{eq: subsets-GN}
    G_N^+ := \{ a \in G_N : 1 \le a \le \lfloor N/2\rfloor \} \, , \quad
    G_N^- := \{ a \in G_N : \lfloor N/2\rfloor < a \le N-1 \}
\ee
of cardinality $r=\phi(N)/2 \in \IZ_{>0}$, where $\phi$ is Euler's totient function.
Note that the map $k \mapsto \underline{k}$ pairs $G_N^+$ with
$G_N^-$. For each $k \in G_N^+$, we define the \emph{antisymmetrized
$q$-Pochhammer components}
\begin{equation}\label{eq:component-def}
    \frakf_{N,k}(\tau) := f_{N,k}(\tau)-f_{N,\underline{k}}(\tau) \, , \quad
    \frakg_{N,k}(\tau) := g_{N,k}(\tau)-g_{N,\underline{k}}(\tau) \, ,
\end{equation}
and we collect them into the length-$r$ vectors
\begin{equation}\label{eq:vf-vg}
    {\bf f}_N(\tau):=\big(\frakf_{N,k}(\tau)\big)_{k \in G_N^+} \, , \quad
    {\bf g}_N(\tau):=\big(\frakg_{N,k}(\tau)\big)_{k \in G_N^+} \, .
\end{equation}
In the remainder of this section, we exploit the results of~\cite{FR25} on
the resurgent and number-theoretic properties of $f_{N,k}$ and $g_{N,k}$ to
show that ${\bf f}_N$ and ${\bf g}_N$ fit within the framework of
vector-valued modular resurgence of Section~\ref{sec:vector-valued-MR}.

\subsection{Modular resurgence}

Let $\tilde{f}_{N,k}(\tau)$ and $\tilde{g}_{N,k}(\tau)$ denote the asymptotic expansions of the $q$-series $f_{N,k}(\tau)$ and $g_{N,k}(\tau)$ for $\tau\to 0$ with $\Im(\tau)>0$. 
Explicitly~\cite[Eqs.~(3.2) and~(3.4)]{FR25},
\begin{subequations} \label{eq: expansions}
\begin{align} 
\tilde{f}_{N,k}(\tau)&=\frac{1}{2} \log(1-\zeta_N^k)+\frac{1}{2\pi \ri \tau} \mathrm{Li}_2(\zeta_N^k)+\psi_k(\tau)\, , \label{eq: expansion-fkN} \\
\tilde{g}_{N,k}(\tau/N) &=- \frac{\pi \ri }{12 \tau} - B_1\left(\tfrac{k}{N}\right) \log(- 2 \pi \ri \tau) - \log \frac{\Gamma\big(\tfrac{k}{N}\big)}{\sqrt{2 \pi}} - B_2\big(\tfrac{k}{N}\big) \frac{\pi \ri \tau}{2} - \varphi_k(\tau)\, ,  \label{eq: expansion-gkN}
\end{align}
\end{subequations}
where $\psi_k$ and $\varphi_k$ are the Gevrey-1 asymptotic series
\begin{subequations}
\begin{align} 
\psi_k(\tau)&=\sum_{\ell=1}^\infty (2\pi \ri \tau)^{2\ell-1}\frac{B_{2\ell}}{(2\ell)!}\mathrm{Li}_{2-2\ell}(\zeta_N^k) \, , \label{eq: tilde-psi} \\
\varphi_k(\tau)&= \sum_{\ell=1}^{\infty} (2 \pi \ri \tau)^{2\ell} \frac{B_{2\ell} B_{2\ell+1}\big(\tfrac{k}{N}\big)}{2\ell (2\ell+1)!} \, ,  \label{eq: tilde-phi}
\end{align}
\end{subequations}
with $B_\ell(\alpha)$ the $\ell$-th Bernoulli polynomial,
$B_\ell=B_\ell(0)$, and $\mathrm{Li}_\ell$ the polylogarithm of order
$\ell$.

\subsubsection{Borel transform and Stokes constants}

The resurgent structures of the formal power series in Eqs.~\eqref{eq: tilde-psi} and~\eqref{eq: tilde-phi} were determined in~\cite[Sec.~3.1]{FR25}. 
From Corollary~3.2 in~\cite{FR25}, the singularities of $\borel[\psi_k]$ are simple poles located at
\be \label{eq: eta-n}
\eta_m=\frac{2\pi\ri}{N}m \, , \quad m\in\IZ_{\neq 0} \, ,
\ee 
while the Stokes constants are 
\be\label{eq:Stokes-R-kN}
R_{k,m} = \sum_{d \mid |m|} \tfrac{d}{m} \left(\delta_{k,d \, (N)} - \delta_{k,-d \, (N)} \right) \, ,
\ee
where the sum is over the positive divisors of $|m|$ and $\delta_{k,n\,(N)}$ is the congruence delta function, \emph{i.e.},  
\begin{equation} \label{eq: mod-delta}
    \delta_{k,n\,(N)}:=\begin{cases}
        1 & \mbox{if} \;\; n\equiv k \pmod{N} \,   \\ 
        0 & \mbox{otherwise} \, 
        \end{cases} \, .
\end{equation} 
From Corollary~3.5 in~\cite{FR25}, the singularities of $\borel[\varphi_k]$ are simple poles at
\begin{equation} \label{eq: zeta-n}
\rho_m=2\pi\ri m \, , \quad m\in\ZZ_{\neq 0} \, ,
\end{equation} 
with Stokes constants
\begin{equation}\label{eq:Stokes-S-kN}
S_{k,m}= 2 \ri  \sum_{d\vert |m|} \tfrac{1}{d} \sin\left(\tfrac{2 \pi k d}{N} \right) \, .
\end{equation}
Note that the Stokes constants are odd under $k \mapsto \underline{k}$, that is, $S_{k,m}= -S_{\underline{k},m}$ and $R_{k,m}= -R_{\underline{k},m}$ for $k \in G_N$, while the parities under $m\mapsto-m$ can be read off directly from Eqs.~\eqref{eq:Stokes-R-kN} and~\eqref{eq:Stokes-S-kN}, which give
\be\label{eq:parity_stokes_m}
S_{k,-m}= S_{k,m}\,, \quad R_{k,-m}= -R_{k,m} \, , \quad m\in\ZZ_{\neq 0} \, .
\ee

Accordingly, the vectors of asymptotic series
\begin{equation}\label{eq:vf-vg-tilde}
    \Psi_N(\tau):=\big(\Psi_{N,k}(\tau)\big)_{k\in G_N^+}\, , \quad
    \Phi_N(\tau):=\big(\Phi_{N,k}(\tau)\big)_{k\in G_N^+}\, ,
\end{equation}
where we define the \emph{antisymmetrized asymptotic components}
\begin{equation}\label{eq:component-asy}
    \Psi_{N,k}(\tau):=\psi_k(\tau)-\psi_{\underline{k}}(\tau)\, , \quad
    \Phi_{N,k}(\tau):=\varphi_k(\tau)-\varphi_{\underline{k}}(\tau)\, ,
\end{equation}
govern the asymptotic expansions of ${\bf f}_N$ and
${\bf g}_N$ via Eqs.~\eqref{eq: expansion-fkN} and~\eqref{eq: expansion-gkN}
and satisfy parts~1 and~2 of Definition~\ref{def:vv-MRS}. To prove that
$\Psi_N$ and $\Phi_N$ are vector-valued MRSs, it remains to show that the Dirichlet series\footnote{By the parity properties in
Eq.~\eqref{eq:parity_stokes_m}, it suffices to consider one Dirichlet series
per collection of Stokes constants: $\CL_k=\pm\CL_{k,\pm}$ while
$\CL'_k=\CL'_{k,\pm}$.}
\begin{equation}\label{eq:L-poch}
    \CL'_{k}(s)\=\sum_{m=1}^\infty\frac{R_{k,m}}{m^s}\, , \quad
    \CL_{k}(s)\=\sum_{m=1}^\infty\frac{S_{k,m}}{m^s}
\end{equation}
are linear combinations of a common set of $L$-functions for all
$k\in G_N^+$. 
Before doing so, observe that the Stokes constants in Eqs.~\eqref{eq:Stokes-R-kN} and~\eqref{eq:Stokes-S-kN} are divisor sums of arithmetic functions on $G_N$ and thus decompose in the Dirichlet-character basis. 

\paragraph{A Fourier decomposition lemma.}

Let $\chi_{j} : \ZZ \to \CC$ with $j \in J_N$ be the Dirichlet characters of modulus~$N$, indexed by a set $J_N$ with $|J_N| = \phi(N)$, and let $\overline{\chi_{j}}=\chi_{j}^{-1}$ denote their complex conjugates. The characters $\chi_j$ provide a Fourier basis of the space of complex-valued functions on $G_N$ and constitute the character group $\widehat{G}_N$. 
The Fourier transform of a function $\omega : G_N\to\CC$ is\footnote{The characters $\chi_j$ and their conjugates $\overline{\chi_j}$ define the same basis of $\widehat{G}_N$ up to relabelling; accordingly, the Fourier transform and its inverse can be written using either convention, provided it is used consistently.}
\be \label{eq: FT}
\widehat{\omega}(\chi_{j})=\sum_{a\in G_N} \omega(a) \, \overline{\chi_{j}}(a) \,, \quad \chi_{j} \in \widehat{G}_N \, ,
\ee
with inverse
\be \label{eq: invFT}
\omega(a)=\frac{1}{\phi(N)}\sum_{j \in J_N} \widehat{\omega}(\chi_{j}) \, \chi_{j}(a) \,, \quad a \in G_N \, .
\ee

\begin{lemma}\label{lem: fourier}
Let $\omega:G_N\to\CC$ (extended to $\IZ_{>0}$ by zero) and $c:\IZ_{>0}\times\IZ_{>0}\to\CC$ a weighting factor. The Dirichlet series
\be\label{eq:L-om}
\CL_\omega(s):=\sum_{m=1}^\infty\frac{1}{m^s}\sum_{d\mid m}c(d,m)\,\omega(d)
\ee
decomposes as
\be \label{eq:L-om-decomp}
    \CL_\omega(s)=\frac{1}{\phi(N)}\sum_{j\in J_N}
                \widehat{\omega}(\chi_j)\, \sum_{m=1}^\infty\frac{1}{m^s}\sum_{d\mid m}c(d,m)\,\chi_j(d) \, .
\ee
Moreover, if $\omega(-a)=\pm\omega(a)$ for all $a\in G_N$, then $\widehat{\omega}(\chi_j)=0$ for every $\chi_{j} \in \widehat{G}_N$ with $\chi_j(-1)=\mp 1$.
\end{lemma}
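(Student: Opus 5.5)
The plan is to substitute the inverse Fourier formula~\eqref{eq: invFT} for $\omega(d)$ inside the inner divisor sum of~\eqref{eq:L-om}, and then interchange the finite sum over $j\in J_N$ with the sums over $d\mid m$ and $m$. One point needs care first. The function $\omega$ is defined on $G_N$ and is extended to $\IZ_{>0}$ by zero, so it must be read as $\omega(d)=\omega(d \bmod N)$ when $\gcd(d,N)=1$ and $\omega(d)=0$ otherwise. This is exactly how a Dirichlet character $\chi_j$ of modulus $N$ behaves, since $\chi_j(d)=0$ whenever $\gcd(d,N)>1$. The identity $\omega(d)=\frac{1}{\phi(N)}\sum_{j}\widehat{\omega}(\chi_j)\chi_j(d)$ therefore holds for every positive integer $d$: for $d$ coprime to $N$ it is~\eqref{eq: invFT}, and otherwise both sides vanish.

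Substituting, the coefficient of $m^{-s}$ becomes
\[
\sum_{d\mid m}c(d,m)\,\omega(d)=\frac{1}{\phi(N)}\sum_{j\in J_N}\widehat{\omega}(\chi_j)\sum_{d\mid m}c(d,m)\,\chi_j(d).
\]
This identity holds termwise for each $m$, and both sides are finite sums. Summing against $m^{-s}$ then gives~\eqref{eq:L-om-decomp} as an identity of formal Dirichlet series. If convergence is wanted, it suffices to note that $|\chi_j(d)|\le1$. Hence, on any half-plane where $\sum_m m^{-\sigma}\sum_{d\mid m}|c(d,m)|$ converges, each of the finitely many series on the right converges absolutely, and the interchange is legitimate.

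For the parity statement, take $\chi_j$ with $\chi_j(-1)=\mp1$ and assume $\omega(-a)=\pm\omega(a)$. Since $a\mapsto -a$ is a bijection of $G_N$, we may reindex $\widehat{\omega}(\chi_j)=\sum_{a}\omega(a)\overline{\chi_j}(a)$ by $a\to -a$. Using multiplicativity, $\overline{\chi_j}(-a)=\overline{\chi_j}(-1)\,\overline{\chi_j}(a)$, and $\overline{\chi_j}(-1)=\chi_j(-1)=\mp1$ because this value is real. This yields
\[
\widehat{\omega}(\chi_j)=(\pm1)(\mp1)\,\widehat{\omega}(\chi_j)=-\widehat{\omega}(\chi_j),
\]
so $\widehat{\omega}(\chi_j)=0$.

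I expect no real obstacle. The only step needing attention is the bookkeeping at the integers $d$ that are not coprime to $N$, which justifies applying~\eqref{eq: invFT} at every divisor $d$ and not only at residues in $G_N$.
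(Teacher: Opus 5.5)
Your proposal is correct and follows the paper's proof. Like the paper, you substitute the inverse Fourier expansion~\eqref{eq: invFT} into the divisor sum, exchange it with the finite sum over characters, and obtain the parity claim by reindexing $a\mapsto -a$ in~\eqref{eq: FT}. You also check explicitly that extending $\omega$ by zero on integers not coprime to $N$ agrees with the vanishing of $\chi_j$ there, a point the paper leaves implicit.
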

\begin{proof}
Substitute Eq.~\eqref{eq: invFT} into the definition of $\CL_\omega$ and exchange the absolutely convergent sums to obtain Eq.~\eqref{eq:L-om-decomp}. For the parity claim: applying the change of variables $a\mapsto-a$ to Eq.~\eqref{eq: FT} produces $\widehat{\omega}(\chi_j)= \pm\chi_j(-1)\widehat{\omega}(\chi_j)$, which vanishes unless $\chi_j(-1)=\pm 1$.
\end{proof}

\paragraph{The vector-valued modular resurgent structure.}

\begin{prop}\label{prop:vv-qpoch}
The vectors of asymptotic series $\Psi_N$ and $\Phi_N$, defined in Eq.~\eqref{eq:vf-vg-tilde}, are vector-valued modular resurgent series.
\end{prop}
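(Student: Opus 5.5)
Parts~1 and~2 of Definition~\ref{def:vv-MRS} have already been checked above. Corollaries~3.2 and~3.5 of~\cite{FR25} show that, for every $k\in G_N$, the Borel transforms $\borel[\psi_k]$ and $\borel[\varphi_k]$ have only simple poles at $\eta_m=\frac{2\pi\ri}{N}m$ and $\rho_m=2\pi\ri m$. The Borel transform is linear, so $\borel[\Psi_{N,k}]=\borel[\psi_k]-\borel[\psi_{\underline{k}}]$ has simple poles at the same locations, with Stokes constants $R_{k,m}-R_{\underline{k},m}=2R_{k,m}$; the same holds for $\Phi_{N,k}$, with constants $2S_{k,m}$. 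The overall factor $2$ is irrelevant. It remains to prove part~3: we must exhibit, uniformly in $k\in G_N^+$, a decomposition of $\CL'_k$ and $\CL_k$ from Eq.~\eqref{eq:L-poch} of the form in Eq.~\eqref{eq: def-Lfunct}. By the parity relations in Eq.~\eqref{eq:parity_stokes_m}, the $\pm$ series differ only by signs, so it suffices to treat $m>0$.

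The plan is to apply Lemma~\ref{lem: fourier} to each of the two families of Stokes constants.

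For $R_{k,m}$, we take $\omega_k(a):=\delta_{k,a\,(N)}-\delta_{-k,a\,(N)}$ on $G_N$ and $c(d,m)=d/m$. The only divisors $d$ that contribute are those with $\gcd(d,N)=1$, and these are exactly where $\omega_k$ is supported. The function $\omega_k$ is odd, and $\widehat{\omega_k}(\chi_j)=\overline{\chi_j}(k)-\overline{\chi_j}(-k)=2\overline{\chi_j}(k)$ for odd $\chi_j$, while it vanishes for even $\chi_j$. The inner series factor as a Dirichlet convolution:
\be
\sum_{m\ge1}\frac{1}{m^s}\sum_{d\mid m}\frac{d}{m}\chi_j(d)=\zeta(s+1)\,L(s,\chi_j).
\ee
Summing over $j$ gives
\be
\CL'_k(s)=\frac{2}{\phi(N)}\sum_{\chi_j\ \text{odd}}\overline{\chi_j}(k)\,\zeta(s+1)L(s,\chi_j).
\ee

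For $S_{k,m}$, we first expand $\sin(2\pi kd/N)$. This is a function of $d \bmod N$, but it is not supported on $G_N$, since $d$ ranges over all integers. To handle this, we write $2\ri\sin(2\pi kd/N)=\zeta_N^{kd}-\zeta_N^{-kd}$ and use the Gauss-sum expansion of $\zeta_N^{kd}$ in characters. Equivalently, we split $d=d'e$ with $e\mid N^\infty$, or directly use the additive-character twist. The cleaner route I would try first is the reverse direction from~\cite{FR25}: there the Dirichlet series of $\varphi_k$ is written via $\sum_d \sin(2\pi kd/N)d^{-s-1}$, which is a linear combination of Hurwitz zeta functions $\zeta(s+1,a/N)$. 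Odd Hurwitz combinations $\zeta(s+1,a/N)-\zeta(s+1,1-a/N)$ then decompose into $N^{s+1}$ times Dirichlet $L$-functions of odd characters, including imprimitive characters induced from divisors of $N$. This yields
\be
\CL_k(s)=\zeta(s)\cdot\sum_{\chi\ \text{odd}}\alpha_{k,\chi}\,N^{s+1}L(s+1,\chi)
\ee
with explicit constants $\alpha_{k,\chi}$ built from Gauss sums. When $\gcd(k,N)=1$, these equal $\tau(\overline\chi)\chi(k)/\phi(N)$ up to normalisation.

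The factors $N^{s}$ are exactly what the $\mathrm{diag}(c_j^s)$ in Eq.~\eqref{eq: def-Lfunct} is designed to absorb. The products $\zeta\cdot L(\cdot,\chi)$ are themselves $L$-functions, with Euler products and meromorphic continuation. For primitive $\chi$ they satisfy functional equations. Imprimitive characters differ from primitive ones only by finite Euler factors, and these can be expanded as finite sums of $d^{-s}$ terms, which are again absorbed into $\mathbf{M}$ and the $c_j^s$. Reading off $\mathbf{M}_\pm$ then completes part~3.

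The main obstacle is the $S_{k,m}$ case. There the summand is an additive character, so non-coprime residues $d$ contribute. Lemma~\ref{lem: fourier} as stated applies only to functions on $G_N$, so this case requires the Gauss-sum or Hurwitz route, together with bookkeeping over the divisors of $N$ for the imprimitive characters.
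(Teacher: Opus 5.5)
For parts~1--2 and for $R_{k,m}$ your argument is the paper's. You apply Lemma~\ref{lem: fourier} with $c(d,m)=d/m$, only odd characters survive, and $\CL'_k$ becomes a combination of $\zeta(s+1)L(s,\chi_j)$.

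For $S_{k,m}$ you take a different route, and your objection to the direct approach is correct. The paper's Step~1 declares $\omega_k(a)=2\ri\sin(2\pi ka/N)$ as a function on $G_N$ and extends it by zero. It then applies Lemma~\ref{lem: fourier} with the Gauss sum at modulus $N$. This gives a square, invertible ${\bf M}$ (Remark~\ref{rmk: inv-M}) against $\zeta(s)L(s+1,\overline{\chi_j})$, with $\chi_j$ modulo $N$ and no scale factors. But the identity $S_{k,m}=\sum_{d\mid m}d^{-1}\omega_k(d)$, with $\omega_k$ vanishing off $G_N$, fails whenever some $g=\gcd(d,N)>1$ has $N/g\geq 3$. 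That happens for every composite $N\neq 4$. For $N=6$, $k=1$:
\[
S_{1,2}=2\ri\big(\sin\tfrac{\pi}{3}+\tfrac12\sin\tfrac{2\pi}{3}\big)=\tfrac{3\sqrt3}{2}\ri,
\]
whereas the paper's decomposition gives $\ri\sqrt3\sum_{d\mid 2}\chi_6(d)/d=\ri\sqrt3$. So the paper's Step~1 for $S_{k,m}$ is valid only for $N$ prime or $N=4$, and your extra bookkeeping over divisors of $N$ is genuinely needed. The cost is that the decomposition matrix is no longer square. Definition~\ref{def:vv-MRS} does not require squareness, so the statement survives, but the invertibility used in Proposition~\ref{prop:vv-qpoch-Ljs} is lost.

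Your displayed formula is not right as written, though, so make that step precise.
\begin{itemize}
\item The factor $N^{s+1}$ produced by converting Hurwitz zeta values into $L$-functions cancels against the $N^{-s-1}$ in $\sum_{d\equiv a\,(N)}d^{-s-1}=N^{-s-1}\zeta(s+1,a/N)$. So there is no uniform $N^{s+1}$ factor in the result.
\item If ``imprimitive characters induced from divisors of $N$'' means characters modulo $N$, those still vanish at exactly the non-coprime $d$ you are worried about. What is needed are characters of modulus $N/g$, evaluated at $d/g$ and weighted by $g^{-s-1}$.
\end{itemize}

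The Hurwitz detour is unnecessary. Write $d=gd'$ with $g=\gcd(d,N)$, so that $\gcd(d',N/g)=1$. Then $\sin(2\pi kd/N)=\sin(2\pi kd'/(N/g))$, and $k$ is a unit modulo $N/g$. The paper's Gauss-sum step at modulus $N/g$ therefore gives
\[
\CL_k(s)=\zeta(s)\sum_{\substack{g\mid N\\ N/g\geq 3}}\frac{2\,g^{-s-1}}{\phi(N/g)}\sum_{\substack{\chi \bmod N/g\\ \chi(-1)=-1}}\chi(k)\,\mathscr{G}(\overline{\chi})\,L(s+1,\chi)\,,
\]
with $\mathscr{G}$ the Gauss sum modulo $N/g$. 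This is exactly of the shape~\eqref{eq: def-Lfunct}, with scales $c=g^{-1}$ and odd characters of all moduli $N/g$. It is the same mechanism the paper itself uses for the Eichler integrals through $J^{\rm odd}_{{\rm all},\,2N}$ and the constants $c_j$.
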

\begin{proof}
Parts~1 and~2 of Definition~\ref{def:vv-MRS} hold by the resurgent
structures recalled above. It remains to exhibit the Dirichlet series $\CL_k'$ and $\CL_k$ in Eq.~\eqref{eq:L-poch} as
linear combinations of a common set of $L$-functions.

\emph{Step 1 (divisor-sum form).} Define the arithmetic functions $\omega'_k, \omega_k : G_N \to \CC$ as
\begin{equation}
    \omega'_{k}(a):= \delta_{k,a\,(N)} - \delta_{k,-a\,(N)}
    \,, \quad \omega_{k}(a):=2\ri \sin\left( \tfrac{2 \pi k a}{N} \right)\,.
\end{equation}
By Eqs.~\eqref{eq:Stokes-R-kN} and~\eqref{eq:Stokes-S-kN}, the Stokes constants $R_{k,m}$ and $S_{k,m}$, $m \in \IZ_{\ne 0}$, are the divisor sums
\be
    R_{k,m} =\sum_{d\vert |m|}\frac{d}{m}\omega'_{k}(d)\, , \quad
    S_{k,m} =\sum_{d\vert |m|} \frac{1}{d} \omega_{k}(d) \,.
\ee
Hence, $\CL'_k$ and $\CL_k$ are instances of $\CL_\omega$ in Eq.~\eqref{eq:L-om} with weights $c(d,m)=d/m$ and $c(d,m)=1/d$, respectively, and Lemma~\ref{lem: fourier} applies.

\emph{Step 2 (parity).} Let us partition the index set $J_N$ by parity as\footnote{$\Jodd_N = \emptyset$ in the degenerate cases $N = 1, 2$.}
\be\label{eq:Jeven-Jodd}
\Jeven_N := \{\, j \in J_N : \chi_j(-1) = +1 \,\}\, , \quad \Jodd_N := \{\, j \in J_N : \chi_j(-1) = -1 \,\}\, ,
\ee
so that $J_N = \Jeven_N \sqcup \Jodd_N$.
Since $\omega'_k$ and $\omega_k$ are odd on $G_N$, Lemma~\ref{lem: fourier} restricts the Fourier sum in Eq.~\eqref{eq:L-om-decomp} to $j\in\Jodd_N$, which we assume throughout.

\emph{Step 3 (Fourier coefficients).} Recall that a Dirichlet character $\chi_j$ modulo $N$, with conductor $D_j \mid N$, is induced by a unique primitive Dirichlet character $\chi_{j, \ast}$ modulo $D_j$ and satisfies
\begin{equation} \label{eq: chi-ast}
    \chi_j(\ell)=\begin{cases}
        \chi_{j, \ast}(\ell) & \mbox{if} \;\; \ell \in G_N \,   \\
        0 & \mbox{otherwise} \, 
        \end{cases} \,.
\end{equation}
The character $\chi_j$ is imprimitive when $h_j:= N/D_j>1$. If $h_j=1$, it is primitive and equal to $\chi_{j, \ast}$. 
From Eq.~\eqref{eq: FT}, the finite Fourier transform of $\omega'_{k}$ evaluates directly to
\begin{equation} \label{eq: FT-omega1}
    \begin{aligned}
        \widehat{\omega'_{k}}(\chi_{j})&
        =2 \overline{\chi_{j,\ast}}(k) \,  , \quad j \in \Jodd_N \, .
    \end{aligned}
\end{equation}
The case of $\omega_k$ requires more care. For any $k\in G_N$, $k$ is invertible modulo $N$, so the substitution $a=k^{-1}b$ re-indexes the following sum
\begin{equation}
    T(k):=\sum_{a\in G_N}\chi_j(a)\,\re^{2\pi\ri\frac{ka}{N}}\=\sum_{b\in G_N}\chi_j(k^{-1}b)\,\re^{2\pi\ri\frac{b}{N}}\=\overline{\chi_j}(k)\,\mathscr{G}(\chi_j)\,,
\end{equation}
where $\mathscr{G}(\chi_j):=\sum_{a\in G_N}\chi_j(a)\,\re^{2\pi\ri a/N}$ is the Gauss sum of the character $\chi_j$, and implies 
\begin{equation}
    \widehat{\omega_k}(\overline{\chi_j})\=T(k)-T(-k)\=\overline{\chi_j}(k)\big(1-\overline{\chi_j}(-1)\big)\mathscr{G}(\chi_j)\, .
\end{equation}
Using $\chi_j(-1)=-1$ for $j\in\Jodd_N$ together with $\chi_j(k)=\chi_{j,\ast}(k)$ for $k\in G_N$, we conclude
\begin{equation} \label{eq: FT-omega2}
    \widehat{\omega_k}(\overline{\chi_j})\=2\,\overline{\chi_{j,\ast}}(k)\,\mathscr{G}(\chi_j)\,,\quad j\in\Jodd_N\,.
\end{equation}

\emph{Step 4 (linear decomposition).} Let us introduce the vectors of Dirichlet series 
\be
\boldsymbol{\CL}'(s):=(\CL'_k(s))_{k\in G_N^+}\, , \quad \boldsymbol{\CL}(s):=(\CL_k(s))_{k\in G_N^+}\, .
\ee
Substituting Eqs.~\eqref{eq: FT-omega1} and~\eqref{eq: FT-omega2} into Lemma~\ref{lem: fourier} yields
\begin{equation}\label{eq:decomp_poch1}
    \boldsymbol{\CL}'(s) \= {\bf M}\,\boldsymbol{L}'(s)\,, \quad
    \boldsymbol{\CL}(s) \= {\bf M}\,\mathrm{diag}_{j\in\Jodd_N}\!\big(\mathscr{G}(\chi_j)\big)\,\boldsymbol{L}(s)\,,
\end{equation}
where the square matrix ${\bf M} \in{\rm Mat}_{r \times r}(\CC)$ with $r=\phi(N)/2=|G_N^+|=|\Jodd_N|$ has entries
\be\label{eq:decomp_poch2}
    {\bf M}_{kj}= 2 \frac{\overline{\chi_{j, \ast}}(k)}{\phi(N)} \, , \quad k \in G_N^+ \, , \quad j \in \Jodd_N \, ,
\ee
and the basis vectors are
\be
\boldsymbol{L}'(s):=(L'_j(s))_{j\in\Jodd_N} \, , \quad \boldsymbol{L}(s):=(L_j(s))_{j\in\Jodd_N} \, .
\ee
The building-block Dirichlet series
\begin{equation}\label{eq:calL}
    L_j'(s):=\sum_{m=1}^\infty\frac{1}{m^s}\bigg[\sum_{d|m}\frac{d}{m}\chi_{j}(d)\bigg] \,, \quad L_j(s):=\sum_{m=1}^\infty\frac{1}{m^s}\bigg[\sum_{d|m}\frac{1}{d} \overline{\chi_{j}}(d)\bigg]
\end{equation}
admit the factorizations
\begin{equation}\label{eq:Lj-factors}
    L'_j(s)=L(\chi_j,s)\,\zeta(s+1)\, , \quad
    L_j(s)=L(\overline{\chi_j},s+1)\,\zeta(s)\, ,
\end{equation}
which follow by writing $m=dq$ in Eq.~\eqref{eq:calL} (so that $d/m=1/q$) and decoupling the double sums as
\be
L'_j(s)=\sum_{d,q\ge1}\frac{\chi_j(d)}{d^s\,q^{s+1}}=L(\chi_j,s)\,\zeta(s+1)\, , \quad L_j(s)=\sum_{d,q\ge1}\frac{\overline{\chi_j}(d)}{d^{s+1}\,q^s}=L(\overline{\chi_j},s+1)\,\zeta(s)\, .
\ee
Here, $L(\chi_{j},s)$ is the Dirichlet $L$-function associated with $\chi_{j}$ and $\zeta(s)$ is the Riemann zeta function. The factorization above guarantees absolute convergence in the right half-plane $\{\Re(s)>1\} \subset \CC$, meromorphic continuation throughout the complex $s$-plane, and an Euler product expansion. Therefore, $L_j'$ and $L_j$ are genuine $L$-functions.

We conclude that $\CL'_k$ and $\CL_k$ are $\CC$-linear combinations
of a finite set of Dirichlet $L$-functions, and $\Psi_N$ and $\Phi_N$ satisfy part~3 of
Definition~\ref{def:vv-MRS}.
\end{proof}

\begin{rmk} \label{rmk: inv-M}
    As a consequence of the character orthogonality relations
    \be
        \frac{2}{\phi(N)}\sum_{j \in \Jodd_N} \chi_{j}(k^{-1}) \, {\chi_j}(a) = \delta_{k,a\,(N)} \, , \quad k \in G_N^+ \, ,
    \ee
    the matrix ${\bf M}$ in Eq.~\eqref{eq:decomp_poch2} is invertible with inverse given by ${\bf M}^{-1} = \tfrac{\phi(N)}{2} \, \overline{{\bf M}}^{\mathsf{T}}$.
\end{rmk}

By the factorization in Eq.~\eqref{eq:Lj-factors}, each $L'_j$ and $L_j$ inherits a functional equation from its factors (\emph{i.e.}, the Dirichlet $L$-function $L(\chi_j,s)$ and the Riemann zeta function). We derive this explicitly in Lemma~\ref{lem: funct-eq}. Then, via the linear decomposition in Eq.~\eqref{eq:decomp_poch1}, we translate it into functional equations for $\CL'_k$ and $\CL_k$ in Proposition~\ref{prop:vv-qpoch-Ljs}.

\begin{lemma} \label{lem: funct-eq}
Let $\chi_{j}$ with $j\in\Jodd_N$ be an odd Dirichlet character modulo $N$, with conductor $D_j\mid N$, and let $\chi_{j,\ast}$ be the primitive Dirichlet character modulo $D_j$ that induces it.\footnote{Recall that $D_j=N$ precisely when $\chi_j$ is primitive.}
The $L$-functions $L'_{j}(s)$ and $L_j(s)$, defined in Eq.~\eqref{eq:calL}, obey the combined functional equation
\be\label{eq:functional_eq}
\Lambda'_j(-s) = \frac{\mathscr{G}(\chi_{j,\ast})}{\ri \sqrt{D_j}} \Lambda_j(s) \, , \quad s \in \CC \, ,
\ee
where $\Lambda_j'$ and $\Lambda_j$ are the meromorphic completions of $L_j'$ and $L_{j}$, respectively, and $\mathscr{G}$ is the Gauss sum.
\end{lemma}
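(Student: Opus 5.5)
The plan is to reduce everything to the classical functional equations of $\zeta$ and of the primitive Dirichlet $L$-function $L(\chi_{j,\ast},s)$. The point where care is needed is the imprimitive case, where the extra Euler factors must be built into the completions themselves. By Eq.~\eqref{eq:Lj-factors}, $L'_j(s)=L(\chi_j,s)\zeta(s+1)$ and $L_j(s)=L(\overline{\chi_j},s+1)\zeta(s)$. By Eq.~\eqref{eq: chi-ast}, $\chi_j$ differs from $\chi_{j,\ast}$ only at primes $p\mid N$ with $p\nmid D_j$. Comparing Euler products in $\Re(s)>1$ and then continuing meromorphically gives
\be
L(\chi_j,s)=E_j(s)\,L(\chi_{j,\ast},s)\,,\qquad E_j(s):=\prod_{p\mid N,\ p\nmid D_j}\big(1-\chi_{j,\ast}(p)p^{-s}\big)\,,
\ee
and similarly $L(\overline{\chi_j},s)=\overline{E}_j(s)L(\overline{\chi_{j,\ast}},s)$, where $\overline{E}_j$ is the same product with $\overline{\chi_{j,\ast}}$. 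Both factors are finite products. They are entire, not identically zero, and equal to $1$ when $h_j=1$.

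I would then define the meromorphic completions to include the gamma factors and to remove these finite Euler factors:
\be
\Lambda'_j(s):=\Big(\tfrac{D_j}{\pi}\Big)^{\frac{s+1}{2}}\Gamma\big(\tfrac{s+1}{2}\big)\,\pi^{-\frac{s+1}{2}}\Gamma\big(\tfrac{s+1}{2}\big)\,\frac{L'_j(s)}{E_j(s)}\,,\qquad
\Lambda_j(s):=\Big(\tfrac{D_j}{\pi}\Big)^{\frac{s+2}{2}}\Gamma\big(\tfrac{s+2}{2}\big)\,\pi^{-\frac{s}{2}}\Gamma\big(\tfrac{s}{2}\big)\,\frac{L_j(s)}{\overline{E}_j(s+1)}\,.
\ee
These are genuine completions of $L'_j$ and $L_j$. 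They are obtained by multiplying each $L$-function by an explicit meromorphic factor that is a ratio of gamma functions and finite Euler products. By construction they equal $\Lambda(\chi_{j,\ast},s)\,\xi(s+1)$ and $\Lambda(\overline{\chi_{j,\ast}},s+1)\,\xi(s)$ respectively. Here $\xi(s)=\pi^{-s/2}\Gamma(s/2)\zeta(s)$, and $\Lambda(\chi,s)=(D/\pi)^{(s+1)/2}\Gamma(\tfrac{s+1}{2})L(\chi,s)$ is the standard completion for an odd primitive character of conductor $D$.

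One must divide by the Euler factors rather than keep them. The reason is that $E_j(-s)=\prod(1-\chi_{j,\ast}(p)p^{s})=\prod(-\chi_{j,\ast}(p)p^{s})\,(1-\overline{\chi_{j,\ast}}(p)p^{-s})$ is proportional to $\overline{E}_j(s)$, not to $\overline{E}_j(s+1)$. The ratio $E_j(-s)/\overline{E}_j(s+1)$ is therefore a nonconstant function of $s$ whenever $h_j>1$. So a completion that retained $E_j$ could not satisfy Eq.~\eqref{eq:functional_eq} with a constant root number. This is the main obstacle, and the normalisation above resolves it.

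The functional equation then follows directly from the two classical results. The first is $\xi(1-s)=\xi(s)$. The second is the functional equation for odd primitive characters, $\Lambda(\chi_{j,\ast},s)=\frac{\mathscr{G}(\chi_{j,\ast})}{\ri\sqrt{D_j}}\Lambda(\overline{\chi_{j,\ast}},1-s)$. Evaluating at $-s$ gives
\be
\Lambda'_j(-s)=\Lambda(\chi_{j,\ast},-s)\,\xi(1-s)=\frac{\mathscr{G}(\chi_{j,\ast})}{\ri\sqrt{D_j}}\,\Lambda(\overline{\chi_{j,\ast}},1+s)\,\xi(s)=\frac{\mathscr{G}(\chi_{j,\ast})}{\ri\sqrt{D_j}}\,\Lambda_j(s)\,.
\ee
This is Eq.~\eqref{eq:functional_eq}, valid as an identity of meromorphic functions on all of $\CC$.

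The remaining checks are routine. First, one confirms the sign convention of the root number: the Gauss sum appearing is that of $\chi_{j,\ast}$, not of $\chi_j$, and the oddness $\chi_{j,\ast}(-1)=\chi_j(-1)=-1$ fixes the factor $\ri^{1}$. Second, one notes the poles: those at $s=0$ from $\xi(s+1)$ and $\xi(s)$, together with the gamma poles, are exchanged consistently under $s\mapsto -s$.
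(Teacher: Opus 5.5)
Your proof is correct and gives the same root number $\mathscr{G}(\chi_{j,\ast})/(\ri\sqrt{D_j})$. It differs from the paper only in how the completions are normalised in the imprimitive case.

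You divide out the finite Euler factors, so that $\Lambda'_j(s)=\Lambda(\chi_{j,\ast},s)\,\xi(s+1)$ and $\Lambda_j(s)=\Lambda(\overline{\chi_{j,\ast}},s+1)\,\xi(s)$. The lemma then becomes the primitive functional equation combined with $\xi(1-s)=\xi(s)$.

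The paper instead keeps $L'_j$ and $L_j$ intact and multiplies each by a complementary Euler factor:
\begin{itemize}
\item $L'_j$ is multiplied by $\overline{E}_j(1-s)=\sum_{d\mid N}\mu(d)\,\overline{\chi_{j,\ast}}(d)\,d^{s-1}$;
\item $L_j$ is multiplied by $E_j(-s)=\sum_{d\mid N}\mu(d)\,\chi_{j,\ast}(d)\,d^{s}$.
\end{itemize}
This amounts to $\Lambda'_j(s)=\xi(\chi_j,s)\,\xi(s+1)$ and $\Lambda_j(s)=\xi(\overline{\chi_j},s+1)\,\xi(s)$. Here $\xi(s)$ is your Riemann completion, and $\xi(\chi_j,s):=\xi(\chi_{j,\ast},s)\,E_j(s)\,\overline{E}_j(1-s)$, with $\xi(\chi_{j,\ast},s)$ equal to your $\Lambda(\chi_{j,\ast},s)$ up to a constant. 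This $\xi(\chi_j,s)$ is an imprimitive completion that obeys the same functional equation as the primitive one. The common factor $E_j(-s)\,\overline{E}_j(1+s)$ then appears on both sides and cancels.

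So your claim that one \emph{must} divide by the Euler factors, and that no completion retaining $E_j$ can have a constant root number, is too strong. Your obstruction argument only rules out correcting factors that are purely archimedean (powers and gamma factors). Relatedly, $E_j\equiv 1$ whenever every prime dividing $N$ already divides $D_j$, for example $N=9$, $D_j=3$. So ``nonconstant whenever $h_j>1$'' should read ``whenever some prime divides $N$ but not $D_j$''.

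Each normalisation has its advantage:
\begin{itemize}
\item Yours is more transparent: the completions are products of standard primitive objects, with poles visibly at $s\in\{-1,0\}$ and $s\in\{0,1\}$.
\item The paper's never divides $L'_j$ or $L_j$ by anything; it only multiplies by gamma factors and entire M\"obius polynomials. It also keeps those polynomials explicit in the form reused in Proposition~\ref{prop:vv-qpoch-Ljs}.
\end{itemize}
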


\begin{proof}
    The proof treats the primitive ($D_j=N$) and imprimitive ($D_j<N$) cases uniformly. 
    By definition of $\chi_{j,\ast}$, we can write $\chi_j(n)=\chi_{j,\ast}(n)\,\delta_{\gcd(n,N),1}$ for $n \in \IZ_{>0}$. Applying the M\"obius identity $\sum_{d\mid m}\mu(d)=\delta_{m,1}$, where $\mu$ is the Möbius function, with $m=\gcd(n,N)$ and using the complete multiplicativity of $\chi_{j,\ast}$ yields the factorization
    \begin{equation}\label{eq:L-funct-imp}
    L(\chi_j,s)\=\sum_{n\ge1}\frac{\chi_{j,\ast}(n)}{n^s}\sum_{d\mid\gcd(n,N)}\mu(d)\= L(\chi_{j,\ast},s)\sum_{d|N} \mu(d)\,\frac{\chi_{j,\ast}(d)}{d^s}\,.
    \end{equation}
    For primitive characters, the M\"obius factor reduces to $1$ since $\chi_{j,\ast}(d)=0$ for every $d\mid N$ with $d>1$.
     
    Recall that the classical functional equation of the completed Dirichlet $L$-function of the primitive odd Dirichlet character $\chi_{j,\ast}$ is
    \begin{equation}\label{eq:Lambda-dir}
    \xi(\chi_{j,\ast},s):=D_j^{\frac{s}{2}}\pi^{-\frac{s+1}{2}}\Gamma\left(\tfrac{s+1}{2}\right)L(\chi_{j,\ast},s)\=\frac{\mathscr{G}(\chi_{j,\ast})}{\ri\sqrt{D_j}}\xi(\overline{\chi}_{j,\ast},1-s)\, .
    \end{equation}
    As a result, the completed $L$-function
    \begin{equation}\label{eq:Lambda-imp-chi}
    \xi(\chi_j,s):=\xi(\chi_{j,\ast},s)\Bigg(\sum_{d|N} \mu(d)\tfrac{\chi_{j,\ast}(d)}{d^s}\Bigg)\Bigg(\sum_{d|N} \mu(d)\tfrac{\overline{\chi_{j,\ast}}(d)}{d^{1-s}}\Bigg)
    \end{equation}
    obeys the functional equation
    \begin{equation} \label{eq: feq-step}
        \xi(\chi_j,s)\=\frac{\mathscr{G}(\chi_{j,\ast})}{\ri\sqrt{D_j}}\xi(\overline{\chi}_j,1-s)\,.
    \end{equation}
    Accordingly, the $L$-functions in Eq.~\eqref{eq:Lj-factors} admit the M\"obius-corrected decompositions
    \begin{subequations}\label{eq:Lj-factors-imprim}
    \begin{align}
        L_j'(s)&= L(\chi_{j,\ast},s) \zeta(s+1) \sum_{d|N} \mu(d)\,\tfrac{\chi_{j,\ast}(d)}{d^s} \label{eq:Lj-factors-imprim-1} \, , \\
        L_j(s)&= L(\overline{\chi_{j,\ast}},s+1) \zeta(s) \sum_{d|N} \mu(d)\,\tfrac{\overline{\chi_{j,\ast}}(d)}{d^{s+1}} \label{eq:Lj-factors-imprimm-2}\, , 
    \end{align}
    \end{subequations}
    and we define their completions to be
    \begin{subequations}\label{eq:Lambda-imprim}
    \begin{align}
        \Lambda_j'(s)&:= D_j^{\frac{s}{2}} \pi^{-s-1} \, \Gamma\left(\tfrac{s+1}{2} \right) \Gamma\left( \tfrac{s+1}{2} \right) L_j'(s) \sum_{d|N} \mu(d)\tfrac{\overline{\chi_{j,\ast}}(d)}{d^{1-s}} \label{eq:Lambda-imprim-1} \, , \\
        \Lambda_j(s)&:= D_j^{\frac{s+1}{2}} \pi^{-s-1} \, \Gamma\left(\tfrac{s+2}{2} \right) \Gamma\left( \tfrac{s}{2} \right) L_j(s) \sum_{d|N} \mu(d)\tfrac{\chi_{j,\ast}(d)}{d^{-s}} \label{eq:Lambda-imprim-2}\, .
    \end{align}
    \end{subequations}
    The functional equation in Eq.~\eqref{eq:functional_eq} then follows from Eq.~\eqref{eq: feq-step} together with the well-known meromorphic continuation of the Riemann zeta function.\footnote{Note that $\frac{\mathscr{G}(\overline{\chi_{j,\ast}})}{\ri \sqrt{D_j}}=\frac{\ri \sqrt{D_j}}{\mathscr{G}(\chi_{j,\ast})}$ by the standard property $\mathscr{G}(\chi_{j,\ast})\mathscr{G}(\overline{\chi_{j,\ast}})=-D_j$ for odd primitive Dirichlet characters.}
    For primitive\footnote{The functional equation in the primitive case was originally derived in~\cite[Sec.~3.3]{FR24-1} and~\cite[Sec.~4.2.1]{FR25}.} $\chi_j$, the Möbius sums in Eqs.~\eqref{eq:Lj-factors-imprim} and~\eqref{eq:Lambda-imprim} equal $1$, so that
    Eq.~\eqref{eq:functional_eq} reduces to $\Lambda'_j(-s)=\frac{\mathscr{G}(\chi_j)}{\ri\sqrt N}\Lambda_j(s)$, where 
    \be
        \Lambda_j'(s):= N^{\frac{s}{2}}\pi^{-s-1}\Gamma(\tfrac{s+1}{2})^2L'_j(s) \, , \quad \Lambda_j(s):= N^{\frac{s+1}{2}}\pi^{-s-1}\Gamma(\tfrac{s+2}{2})\Gamma(\tfrac s2)L_j(s)\, .
    \ee
\end{proof}

Combining Lemma~\ref{lem: funct-eq} with the linear decomposition in Eq.~\eqref{eq:decomp_poch1}, we are now ready to translate the building-block functional equations for $L'_j$ and $L_j$ into functional equations for $\CL'_k$ and $\CL_k$.

\begin{prop}\label{prop:vv-qpoch-Ljs}
The Dirichlet series $\CL'_k(s)$ and $\CL_k(s)$, defined in Eq.~\eqref{eq:L-poch}, are absolutely convergent for $s \in \{\Re(s)>1\} \subset \CC$ and admit meromorphic continuation to $s \in \{\Re(s) <0\} \subset \CC$ through each other.
\end{prop}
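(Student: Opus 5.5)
The plan is to work entirely through the linear decomposition in Eq.~\eqref{eq:decomp_poch1} and the building-block functional equation of Lemma~\ref{lem: funct-eq}.

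\emph{Absolute convergence.} I would start from the explicit Stokes constants. From Eq.~\eqref{eq:Stokes-R-kN} we get $|R_{k,m}|\le \sum_{d\mid m} d/m=\sigma_{-1}(m)$. From Eq.~\eqref{eq:Stokes-S-kN} we get $|S_{k,m}|\le 2\sigma_{-1}(m)$. Since $\sigma_{-1}(m)=O(\log\log m)$, and a fortiori $O(m^\epsilon)$, both $\CL'_k$ and $\CL_k$ converge absolutely for $\Re(s)>1$. Equivalently, the factorizations $L'_j=L(\chi_j,s)\zeta(s+1)$ and $L_j=L(\overline{\chi_j},s+1)\zeta(s)$ converge absolutely there, and Eq.~\eqref{eq:decomp_poch1} is a finite linear combination.

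\emph{Continuation through each other.} Eq.~\eqref{eq:decomp_poch1} gives $\boldsymbol{\CL}'(s)={\bf M}\boldsymbol{L}'(s)$. For each $j\in\Jodd_N$, Lemma~\ref{lem: funct-eq} expresses $L'_j(s)$ for $\Re(s)<0$ through $L_j(-s)$, which lies in the region $\Re(-s)>0$. Writing the completions in Eq.~\eqref{eq:Lambda-imprim} as $\Lambda'_j(s)=\gamma'_j(s)L'_j(s)$ and $\Lambda_j(s)=\gamma_j(s)L_j(s)$, where the gamma factors, powers of $D_j$ and $\pi$, and Möbius polynomials are collected into $\gamma'_j$ and $\gamma_j$, we get
\[
L'_j(s)=\frac{\mathscr{G}(\chi_{j,\ast})}{\ri\sqrt{D_j}}\,\frac{\gamma_j(-s)}{\gamma'_j(s)}\,L_j(-s)\,.
\]
I would then invert the second relation in Eq.~\eqref{eq:decomp_poch1}, using Remark~\ref{rmk: inv-M}, to obtain
\[
\boldsymbol{L}(-s)=\mathrm{diag}\big(\mathscr{G}(\chi_j)\big)^{-1}\,\tfrac{\phi(N)}{2}\,\overline{{\bf M}}^{\mathsf T}\,\boldsymbol{\CL}(-s)\,.
\]
The Gauss sums $\mathscr{G}(\chi_j)$ need not be nonzero when $\chi_j$ is imprimitive, so I must check that the inversion is legitimate. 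Where $\mathscr{G}(\chi_j)=0$, I would instead use the second relation only on the invertible block, or equivalently continue $\boldsymbol{\CL}'$ directly via $\boldsymbol{L}(-s)$ written as Dirichlet $L$-function products. Substituting yields
\[
\boldsymbol{\CL}'(s)={\bf M}\,\mathrm{diag}_j\!\Big(\tfrac{\mathscr{G}(\chi_{j,\ast})\gamma_j(-s)}{\ri\sqrt{D_j}\,\gamma'_j(s)\,\mathscr{G}(\chi_j)}\Big)\tfrac{\phi(N)}{2}\overline{{\bf M}}^{\mathsf T}\boldsymbol{\CL}(-s)\,.
\]
This is a matrix of meromorphic functions applied to $\boldsymbol{\CL}(-s)$, and it continues $\boldsymbol{\CL}'$ to $\Re(s)<0$ in terms of the absolutely convergent $\boldsymbol{\CL}$. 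Running the same argument with the roles exchanged gives the symmetric statement for $\boldsymbol{\CL}$.

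\emph{Main obstacle.} I expect the hardest step to be the imprimitive characters. There the Möbius correction factors in the completions may vanish at some $s$, and $\mathscr{G}(\chi_j)$ may vanish. I would handle this by checking that these zeros only introduce meromorphic, not essential, behaviour. If a Gauss sum vanishes, I would avoid inverting ${\bf M}\,\mathrm{diag}(\mathscr{G})$ and state the continuation as holding through the common vector $\boldsymbol{L}$ rather than strictly through $\boldsymbol{\CL}$.
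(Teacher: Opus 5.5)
Your route is the paper's. You decompose $\boldsymbol{\CL}'={\bf M}\boldsymbol{L}'$ and $\boldsymbol{\CL}={\bf M}\,\mathrm{diag}(\mathscr{G}(\chi_j))\boldsymbol{L}$, continue the building blocks with Lemma~\ref{lem: funct-eq}, and transport the result back. Your direct bounds $|R_{k,m}|\le\sigma_{-1}(m)$ and $|S_{k,m}|\le 2\sigma_{-1}(m)$ prove absolute convergence without using the decomposition, which is preferable here.

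The gap is in ``through each other'', and it is the one you flag. Inverting the second relation needs $\mathscr{G}(\chi_j)\neq0$ for every $j\in\Jodd_N$, and this fails. For $N=8$, the odd character modulo $8$ induced by $\chi_{-4}$ has $\mathscr{G}(\chi_j)=\re^{\pi\ri/4}-\re^{3\pi\ri/4}+\re^{5\pi\ri/4}-\re^{7\pi\ri/4}=0$. Here $\chi_{-4},\chi_{-8}$ denote the odd primitive characters of conductor $4$ and $8$. The paper's proof invokes only the invertibility of ${\bf M}$ and passes over this as well.

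Your fallback, continuation through $\boldsymbol{L}$, is weaker than the claim. The problem also cannot be repaired inside Eq.~\eqref{eq:decomp_poch1}. That decomposition of $\boldsymbol{\CL}$ only sees $\omega_k$ on $G_N$, whereas $S_{k,m}$ sums $\sin(2\pi kd/N)$ over all divisors $d$, including those with $\gcd(d,N)>1$, where the sine need not vanish. For $N=8$, $k=1$, the true series is $\CL_1(s)=2\ri\,\zeta(s)\big[\tfrac{\sqrt2}{2}L(\chi_{-8},s+1)+2^{-s-1}L(\chi_{-4},s+1)\big]$. Its second term is exactly the $\chi_{-4}$-component that the vanishing Gauss sum erases. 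Already for $N=6$, where no Gauss sum vanishes, the decomposition gives $\sqrt3\,\ri$ for the coefficient $S_{1,2}=\tfrac{3\sqrt3}{2}\ri$.

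The clean fix is to avoid characters altogether. Writing $m=dq$ as in the paper, and with Hurwitz zeta functions $\zeta(s,a)$, you have $\CL'_k(s)=\zeta(s+1)\,N^{-s}\big[\zeta(s,\tfrac kN)-\zeta(s,1-\tfrac kN)\big]$ and $\CL_k(s)=2\ri\,\zeta(s)\sum_{d\ge1}\sin(\tfrac{2\pi kd}{N})\,d^{-s-1}$. Now use Hurwitz's formula
\[
\zeta(1-s,a)-\zeta(1-s,1-a)=\frac{4\,\Gamma(s)\sin(\pi s/2)}{(2\pi)^s}\sum_{n\ge1}\frac{\sin(2\pi na)}{n^s}
\]
at $a=k/N$ with $s\mapsto s+1$, together with $\zeta(1-s)=2(2\pi)^{-s}\cos(\tfrac{\pi s}{2})\Gamma(s)\zeta(s)$. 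This gives $\CL'_k(-s)=t(s)\,\CL_k(s)$ for every $k\in G_N^+$ and every $N\ge3$, with $t$ exactly as in Eq.~\eqref{eq: funct-eq-odd-ts}. Since $t'(s)\,t(-s)=1$, it also gives $\CL_k(-s)=t'(s)\,\CL'_k(s)$.

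This is the continuation through each other, component-wise and with nothing to invert. The Hurwitz representation also gives meromorphy on all of $\CC$ directly. For prime $N$, where every odd character is primitive, it coincides with what your ${\bf M}$-argument produces.
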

\begin{proof}
Since $\omega'_k$ and $\omega_k$ are odd on $G_N$, only the odd Dirichlet characters modulo $N$ contribute to the decompositions in Eq.~\eqref{eq:decomp_poch1} (see Lemma~\ref{lem: fourier}), so we take $j\in\Jodd_N$ throughout. Unwrapping the functional equation of Lemma~\ref{lem: funct-eq}---that is, substituting the M\"obius-corrected completions of Eq.~\eqref{eq:Lambda-imprim} and the factorizations of Eq.~\eqref{eq:Lj-factors-imprim} and applying the gamma reflection and duplication identities---yields
    \begin{subequations} \label{eq: funct-eq-odd-imp}
    \begin{align}
            L_j'(-s)&= \frac{D_j^s}{N^s} t(s) \, \mathscr{G}(\chi_{j,\ast}) L(\overline{\chi_{j,\ast}},s+1) \zeta(s) \sum_{d|N} \mu(d)\tfrac{\chi_{j,\ast}(d)}{d^{-s}}  \, , \\
            \mathscr{G}(\chi_{j,\ast}) \,L_j(-s)&= \frac{D_j^s}{N^s} t'(s) \, L(\chi_{j,\ast},s) \zeta(s+1)\sum_{d|N} \mu(d)\tfrac{\overline{\chi_{j,\ast}}(d)}{d^{1-s}} \, ,
    \end{align}
    \end{subequations}
    where the meromorphic functions $t,t' : \CC \to \CC$ are independent of $j$ and explicitly given by
    \be \label{eq: funct-eq-odd-ts}
    t(s) = (1+\cos(\pi s)) \, \frac{\Gamma(s) \Gamma(s+1)}{\pi \ri (4 \pi^2/N)^{s}} \, , \quad t'(s) = (1-\cos(\pi s)) \,  \frac{\Gamma(s) \Gamma(s+1)}{\pi \ri (4 \pi^2/N)^{s}} \, .
    \ee
    When $\chi_j$ is primitive ($D_j=N$), the M\"obius sums reduce to $1$ and Eq.~\eqref{eq: funct-eq-odd-imp} collapses to
    \be \label{eq: funct-eq-odd}
    L_j'(-s)= t(s) \, \mathscr{G}(\chi_{j}) L_j(s)  \, , \quad \mathscr{G}(\chi_{j}) \,L_j(-s)= t'(s) \, L'_j(s) \, .
    \ee

Absolute convergence of $\CL'_k(s)$ and $\CL_k(s)$ on $\{\Re(s)>1\}$ is inherited from that of the building blocks $L'_j(s)$ and $L_j(s)$ (established after Eq.~\eqref{eq:Lj-factors}) through the linear decomposition in Eq.~\eqref{eq:decomp_poch1}. For the analytic continuation to $\{\Re(s)<0\}$, the right-hand sides of Eqs.~\eqref{eq: funct-eq-odd} and~\eqref{eq: funct-eq-odd-imp} are products of the entire completed Dirichlet $L$-functions of the primitive characters $\chi_{j,\ast}$, the Riemann zeta function, and finite M\"obius polynomials, hence they are meromorphic on all of $\CC$; thus each $L'_j$ and $L_j$, and therefore each $\CL'_k$ and $\CL_k$ via Eq.~\eqref{eq:decomp_poch1}, continue meromorphically to $s\in\CC$. Since the coefficient relating $L'_j(-s)$ to $L_j(s)$ depends on $j$, this continuation holds \emph{through each other} at the level of the vectors: by the invertibility of the matrix ${\bf M}$ (see Remark~\ref{rmk: inv-M}), $\boldsymbol{\CL}'(-s)$ is a meromorphic combination of the components of $\boldsymbol{\CL}(s)$, and vice versa.
\end{proof}

\subsubsection{The paradigm} \label{sec: paradigm-pochh}

To conclude that the vector-valued MRSs $\Psi_N$ and $\Phi_N$ in Eq.~\eqref{eq:vf-vg-tilde} form a canonical pair connected via the modular resurgence paradigm of Section~\ref{sec:vector-valued-MR}, we must verify one last statement. 
Particularly, for all $k \in G_N^+$, we recall from~\cite[Cor.~3.3 and~3.6]{FR25} that
\begin{subequations}
    \begin{align}
        {\rm disc}_{\frac{\pi}{2}}[\psi_k](\tau) & =\sum_{m=1}^\infty R_{k,m} \, \re^{-\eta_m/\tau}=-\frakg_{N,k}\Big(-\tfrac{1}{N\tau}\Big) \, ,  \label{eq:disc_psi}\\
        {\rm disc}_{\frac{\pi}{2}}[\varphi_k](\tau) & =\sum_{m=1}^\infty S_{k,m} \, \re^{-\rho_m/\tau}=-\frakf_{N,k}\Big(-\tfrac{1}{\tau}\Big)+\frac{\pi\ri}{N}(2k-N) \, , \label{eq:disc_phi}
    \end{align}
\end{subequations}
that is, the discontinuities of $\Psi_N$ and $\Phi_N$ reproduce the vectors of $q$-series ${\bf g}_{N}$ and ${\bf f}_{N}$ in Eq.~\eqref{eq:vf-vg}, up to a change of coordinate and up to an overall constant. 
As discussed in Sections~\ref{sec: primer} and~\ref{sec:vector-valued-MR}, these discontinuities can be suitably interpreted as the inverse Mellin transforms of the Dirichlet series $\CL_k'$ and $\CL_k$ in Eq.~\eqref{eq:L-poch}, respectively.

Therefore, the modular resurgence paradigm holds component-wise, namely, for each antisymmetrized pair $\frakg_{N,k}(\tau)$ and $\frakf_{N,k}(\tau)$ separately, with the meromorphic continuation of $\CL_k'(s)$ and $\CL_k(s)$ dictated by Proposition~\ref{prop:vv-qpoch-Ljs}. See the diagram below.
\begin{equation}\label{diag:vvMR-diagram-poch}
\begin{tikzcd}[column sep=2.2em, row sep=2.9em]
    4\CL'_{k}(s)  
    \arrow[ddrrrrrr,sloped,"\text{mer. continuation via } L_j \text{  and  } L'_j", start anchor={[yshift=-2ex]}, end anchor={[yshift=2ex]}]
    \arrow[rr, "\text{inverse Mellin}","\text{Eq.}~\eqref{eq:disc_psi}" swap] 
    &  & \frakg_{N,k}(\tau) \arrow[rr,"\tau \rightarrow 0","\text{Eq}.~\eqref{eq: expansion-gkN}" swap] &  & -\Phi_{N,k}(\tau)
    \arrow[dd, gray!50, bend left=35,"\text{disc}\, /\, \text{Fricke} \quad \quad", "\text{Eq.}~\eqref{eq:disc_phi} \quad "'{yshift=-0.5ex}, sloped, allow upside down]
    \arrow[rr,"\text{resurgence}","\text{Eq.}~\eqref{eq:Stokes-S-kN}" swap]\arrow[ll,bend right=35,red!50,"\text{Eq.~\eqref{eq:median-pochh}}","\CS^{\rm med}_{\pi/2}" swap] & & \{ -2 S_{k,m} \}\arrow[dd,sloped,"\text{Dirichlet series}","\text{Eq.}~\eqref{eq:L-poch}" swap] \\  \\
    \{ 4 R_{k,m} \} \arrow[uu,sloped,"\text{Dirichlet series}"] & & 2\Psi_{N,k}(\tau)
    \arrow[uu, gray!50, bend left=35,"\text{disc}\, /\, \text{Fricke} \quad \quad", "\text{Eq.}~\eqref{eq:disc_psi} \quad "'{yshift=-0.5ex}, sloped, allow upside down]
    \arrow[ll,"\text{resurgence}","\text{Eq.}~\eqref{eq:Stokes-R-kN}" swap]\arrow[rr,bend right=35,red!50,"\CS^{\rm med}_{\pi/2}" swap]  
    &  & 2\frakf_{N,k}(\tau) \arrow[ll,"\tau \rightarrow 0","\text{Eq.}~\eqref{eq: expansion-fkN}" swap] &  & -2\CL_{k}(s) \arrow[ll,"\text{inverse Mellin}","\text{Eq.}~\eqref{eq:disc_phi}" swap]\arrow[uullllll,sloped,swap,"\text{Prop.}~\ref{prop:vv-qpoch-Ljs}", start anchor={[yshift=2ex]}, end anchor={[yshift=-2ex]}]
\end{tikzcd}
\end{equation}
Note that this corresponds to the vector-valued paradigm in Eq.~\eqref{diag:vvMR-diagram}, here written in terms of the individual components of the vectors. In fact, the $q$-Pochhammer example discussed in this section is a rather special instance of the general paradigm where the vector-valued modular resurgent structures diagonalize into collections of scalar ones.

\begin{rmk}
We consider $j \in \Jodd_N$, as before. Taking the inverse Mellin transform of the $L$-functions\footnote{$\mathscr{G}(\chi_j)\,L_j(s)$ is the natural building block on the $\CL_k$-side of Eq.~\eqref{eq:decomp_poch1}.} $L_j'(s)$ and $\mathscr{G}(\chi_j)L_j(s)$ in Eq.~\eqref{eq:calL} produces the pairs of $q$-series\footnote{In the notation of~\cite{BD_Lambert25}, the $q$-series $\Xi_j(\tau)$ and $\Xi'_j(\tau)$ in Eq.~\eqref{eq: weighted1} correspond to the twisted Lambert series $-\calL_1(\overline{\chi_{j}} ; \, \tau)$ and $-\mathscr{G}(\chi_j)\tilde{\calL}_1(\chi_{j} ; \, \tau)$, respectively.}
\be \label{eq: weighted1}
    \Xi'_j(\tau)\:= \sum_{m=1}^\infty\sum_{d|m}\frac{d}{m}\chi_{j}(d) q^m
    \, , \quad
    \Xi_j(\tau)\:=\mathscr{G}(\chi_j)\sum_{m=1}^\infty \sum_{d|m}\frac{1}{d} \overline{\chi_{j}}(d) q^m \, .
\ee
We collect these into the vectors $\boldsymbol{\Xi}'(\tau):=\big(\Xi'_j(\tau)\big)_{j\in\Jodd_N}$ and $\boldsymbol{\Xi}(\tau):=\big(\Xi_j(\tau)\big)_{j\in\Jodd_N}$, which satisfy
\begin{equation}\label{eq: weighted-vec}
    -\boldsymbol{\Xi}'(\tau) \= {\bf M}^{-1} \, {\bf g}_N(\tau) \, , \quad -\boldsymbol{\Xi}(\tau) \= {\bf M}^{-1} \, {\bf f}_N(\tau) \, ,
\end{equation}
where ${\bf M}^{-1}$ is the matrix of Remark~\ref{rmk: inv-M}.
When the character $\chi_j$ is primitive, the $q$-series
\be
    \sum_{k \in G_N^+} \chi_{j}(k) \frakg_{N,k}(\tau) = -\Xi'_j(\tau)\, , \quad \sum_{k \in G_N^+} \chi_{j}(k) \frakf_{N,k}(\tau)= -\Xi_j(\tau)
\ee
satisfy the original modular resurgence paradigm and conjectures of Section~\ref{sec: primer}~\cite[Sec.~4]{FR25}. 
As a result, after a change of basis corresponding to the twisting by the appropriate Dirichlet characters, the vector-valued MRSs $\Psi_N$ and $\Phi_N$ become simply a collection of scalar MRSs. 
\end{rmk}

\subsubsection{Verification of the conjectures}

In this subsection, we show that the $q$-series vectors ${\bf f}_N$ and ${\bf g}_N$ satisfy Conjectures~\ref{conj:vvMR-conj1} and~\ref{conj:vvMR-conj2}, as inherited component-wise from~\cite{FR25}. 
By Theorem~3.9 in~\cite{FR25}, the median resummation of the asymptotic expansion $\tilde{f}_{N,k}$ in Eq.~\eqref{eq: expansion-fkN} fails to reconstruct the $q$-Pochhammer symbol $f_{N,k}$ by a correction that is symmetric under $k \leftrightarrow \underline{k}$. The analogous statement for $\tilde{g}_{N,k}$ in Eq.~\eqref{eq: expansion-gkN} is Conjecture~5 in~\cite{FR25}.
Therefore, the effectiveness of the median resummation holds component-wise for the vectors ${\bf f}_N$ and ${\bf g}_N$.
\begin{lemma}\label{lem:conj-median-f-g}
   The vectors of $q$-series ${\bf f}_N(\tau)$ and ${\bf g}_N(\tau)$, $\tau \in \IH$, in Eq.~\eqref{eq:vf-vg} are reconstructed via median resummation of their asymptotic expansions $\tilde{\bf f}_N(\tau)$ and $\tilde{\bf g}_N(\tau)$ for $\tau\to0$ with $\Im(\tau)>0$, that is,
        \begin{equation}\label{eq:median-pochh}
        \CS_\frac{\pi}{2}^{\rm med}[\tilde{\bf f}_N](\tau)={\bf f}_N(\tau) \quad \text{ and }\quad \CS_\frac{\pi}{2}^{\rm med}[\tilde{\bf g}_N](\tau)={\bf g}_N(\tau)\,.
    \end{equation}
\end{lemma}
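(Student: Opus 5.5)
The plan is to reduce the vector statement to the scalar results of~\cite{FR25}, component by component, using the antisymmetrization in Eq.~\eqref{eq:component-def}. Median resummation is linear and acts component-wise, so it suffices to prove, for each fixed $k\in G_N^+$, that
\[
\CS^{\rm med}_{\pi/2}[\tilde f_{N,k}-\tilde f_{N,\underline k}]=\frakf_{N,k}
\quad\text{and}\quad
\CS^{\rm med}_{\pi/2}[\tilde g_{N,k}-\tilde g_{N,\underline k}]=\frakg_{N,k}.
\]
Here $\tilde{\bf f}_N$ and $\tilde{\bf g}_N$ are read as the full asymptotic expansions of Eqs.~\eqref{eq: expansion-fkN}--\eqref{eq: expansion-gkN}, antisymmetrized. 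Each of these consists of finitely many explicit leading terms (the $1/\tau$, logarithmic, constant and linear pieces) plus the Gevrey-1 tails $\Psi_{N,k}$, respectively $-\Phi_{N,k}$. Since the leading pieces are convergent, resummation leaves them unchanged. The whole content therefore sits in the resummation of the tails $\psi_k-\psi_{\underline k}$ and $\varphi_k-\varphi_{\underline k}$.

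For ${\bf f}_N$, I will invoke Theorem~3.9 of~\cite{FR25}. It states that
\[
\CS^{\rm med}_{\pi/2}[\tilde f_{N,k}](\tau)=f_{N,k}(\tau)+C_{N,k}(\tau),
\]
where the correction $C_{N,k}$ is invariant under $k\leftrightarrow\underline k$. Subtracting the same identity for $\underline k$ cancels the correction, which gives the first equality in Eq.~\eqref{eq:median-pochh}. The one thing to check carefully is that the symmetric correction is really a function of the pair $\{k,\underline k\}$ with identical normalization, including the branch choices in $\tfrac12\log(1-\zeta_N^k)$ and $\mathrm{Li}_2(\zeta_N^k)$. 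Any residual constant such as $\tfrac{\pi\ri}{N}(2k-N)$, which appears in Eq.~\eqref{eq:disc_phi}, must be absorbed consistently. I would verify this by recomputing the correction from the discontinuity formula via Eq.~\eqref{eq: median2}: write
\[
\CS^{\rm med}_{\pi/2}=s_{\pi/2-\epsilon}+\tfrac12\,\mathrm{disc}_{\pi/2},
\]
with the discontinuity given by Eq.~\eqref{eq:disc_psi}.

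For ${\bf g}_N$, the corresponding scalar statement is only Conjecture~5 of~\cite{FR25}, so I would prove the antisymmetrized version directly. The approach is to represent $\log(q^k;q^N)_\infty-\log(q^{\underline k};q^N)_\infty$ as an inverse Mellin integral of $\CL'_k$-type data, namely the Lambert series $-\sum_m\sum_{d\mid m}\tfrac dm\,\omega'_k(d)\,q^{m}$ rescaled. I would then shift the contour to the left through the poles of $\Gamma(s)\Gamma(s+1)$, $\zeta$ and $L(\chi_j,\cdot)$ with $j\in\Jodd_N$. This reproduces the asymptotic terms of Eq.~\eqref{eq: expansion-gkN}, and the remaining tail integral is identified with the Borel--Laplace sum via the functional equation of Proposition~\ref{prop:vv-qpoch-Ljs}.

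Antisymmetrization removes the even-character contributions, including the trivial character, whose pole structure obstructs the scalar statement. The term $-\tfrac{\pi\ri}{12\tau}$ and $\log\Gamma$-type normalizations then cancel or combine cleanly. The median prescription arises from the principal-value treatment of the poles of $1/(1-\cos\pi s)$-type factors on the imaginary Borel axis. I expect this step, matching the leftover contour integral with the average of lateral Laplace sums, to be the main obstacle. My first attempt there is to use the diagonalization of Eq.~\eqref{eq: weighted-vec}: pass to the twisted combinations $\Xi'_j$, for which~\cite[Sec.~4]{FR25} already establishes median summability when $\chi_j$ is primitive, and then invert with ${\bf M}^{-1}$ from Remark~\ref{rmk: inv-M}. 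Imprimitive odd characters are handled by the Möbius-corrected factorization of Lemma~\ref{lem: funct-eq}, which only multiplies by finite Dirichlet polynomials and so does not affect the argument.
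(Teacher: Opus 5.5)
Your treatment of ${\bf f}_N$ is the paper's argument. Theorem~3.9 of~\cite{FR25} gives the median resummation up to a correction symmetric under $k\leftrightarrow\underline{k}$, and the antisymmetrization cancels it.

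For ${\bf g}_N$ you take a genuinely different route, and it is sound. The paper only cites~\cite[Eqs.~(3.34) and~(3.38)]{FR25}, the per-$k$ statement with a symmetric correction. As the paper itself notes just before the lemma, that $g$-side input is Conjecture~5 there, so the paper's ${\bf g}_N$ half is in effect conditional.

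Your diagonalization avoids this. By Lemma~\ref{lem: fourier} one has ${\bf g}_N=-{\bf M}\,\boldsymbol{\Xi}'$, so you multiply by ${\bf M}$ rather than ${\bf M}^{-1}$. The M\"obius factorization then gives $\Xi'_j(\tau)=\sum_{d\mid N}\mu(d)\,\chi_{j,\ast}(d)\,\Xi'_{j,\ast}(d\tau)$, where $\Xi'_{j,\ast}$ is the twisted Lambert series of the primitive character at level $D_j$. For any Gevrey-1 series $\tilde u$ and real $d>0$, $s_\theta[\tilde u(d\,\cdot)](\tau)=s_\theta[\tilde u](d\tau)$, and the Borel poles move from $\eta_m$ to $\eta_m/d$ on the same ray. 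Hence median resummation commutes with these rescalings and with finite linear combinations, and the primitive results of~\cite[Sec.~4]{FR25} propagate to every component.

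You should state this rescaling mechanism explicitly rather than saying the Dirichlet polynomial ``does not affect the argument'', because it carries real weight. For $N=12$, for example, both odd characters (conductors $3$ and $4$) are imprimitive, so all of ${\bf g}_{12}$ comes through this step.

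What your route buys is independence from Conjecture~5. Applied to $\boldsymbol{\Xi}$, it would even make Theorem~3.9 unnecessary for ${\bf f}_N$. The price is that you need~\cite[Sec.~4]{FR25} to actually \emph{prove}, not merely check, median summability of the primitive twisted $g$-side sums. The paper's route is shorter but inherits the conjectural input.

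The Mellin--Barnes sketch can be dropped, and as written it is slightly off in three places:
\begin{itemize}
\item The Mellin kernel of the $q$-series is a single $\Gamma(s)$; the factors $\Gamma(s)\Gamma(s+1)$ and $1\pm\cos\pi s$ belong to $t$ and $t'$.
\item $L(\chi_j,s)$ is entire for odd $\chi_j$, so it contributes no poles.
\item The even characters obstruct the scalar statement not through poles but because their asymptotic expansions terminate. This leaves exponentially small remainders that are invisible in the Borel plane.
\end{itemize}
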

\begin{proof}
    The statement follows from~\cite[Eqs.~(3.34) and~(3.38)]{FR25}.
\end{proof}

By Theorem~3.10 in~\cite{FR25}, the $q$-Pochhammer symbols $f_{k,N}$ and $g_{k,N}$ are holomorphic quantum modular functions for the subgroup $\Gamma_N \subset {\sf SL}_2(\ZZ)$ generated by 
\be \label{eq: GammaN-gen}
T=\left(\begin{matrix}
    1 & 1 \\
    0 & 1
\end{matrix}\right) \, , \quad \gamma_N=\left(\begin{matrix}
    1 & 0 \\
    N & 1
\end{matrix}\right) \, .
\ee
The same holds for the antisymmetrized components $\frakf_{N,k}$ and $\frakg_{N,k}$. 
\begin{lemma}\label{lem:mod_qpochh}
   The vectors of $q$-series ${\bf f}_N(\tau)$ and ${\bf g}_N(\tau)$, $\tau \in \IH$, in Eq.~\eqref{eq:vf-vg} are vector-valued holomorphic quantum modular functions for $\Gamma_N \subset {\sf SL}_2(\ZZ)$ with trivial multiplier system.
\end{lemma}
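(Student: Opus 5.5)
The plan is to reduce the statement to the scalar result of~\cite[Thm.~3.10]{FR25}, which says that each $q$-Pochhammer symbol $f_{N,k}$ and $g_{N,k}$, $k\in G_N$, is a holomorphic quantum modular function (weight $\omega=0$) for $\Gamma_N=\langle T,\gamma_N\rangle$. We take the rank-$r$ slash operator of Eq.~\eqref{eq: slash} with $\omega=0$ and $\Om\equiv\mathbb{1}_r$. This is trivially a multiplier system, since ${\bf j}(\tau;\gamma)=\mathbb{1}_r$ satisfies the cocycle identity in Eq.~\eqref{eq: cocycle-id}. Because the multiplier is the identity, the cochain in Eq.~\eqref{eq: cocycle} acts componentwise:
\be
h_\gamma[{\bf f}_N](\tau)=\big(h_\gamma[f_{N,k}](\tau)-h_\gamma[f_{N,\underline{k}}](\tau)\big)_{k\in G_N^+}\,,
\ee
and similarly for ${\bf g}_N$. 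The scalar difference functions here are those of Eq.~\eqref{eq: diff-funct} with $\omega=0$. Holomorphy of each component is thus reduced to holomorphy of the scalar difference functions of $f_{N,k}$ and $f_{N,\underline{k}}$, both of which lie in $G_N$.

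The key steps are as follows. First, we use the remark after Definition~\ref{def: vv-QMF}: by the additive cocycle identity in Eq.~\eqref{eq: slash-cocycle}, it suffices to check the generators $T$ and $\gamma_N$ from Eq.~\eqref{eq: GammaN-gen}.

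For $T$, the argument is immediate for ${\bf g}_N$, since $q\mapsto q$ and hence $h_T=0$. For ${\bf f}_N$ we also get $h_T=0$, because $(\zeta_N^k;q)_\infty$ depends on $\tau$ only through $q$ and so is $1$-periodic. There is one subtlety: $\log$ of a $1$-periodic function is periodic only up to $2\pi\ri\ZZ$. We fix the branch as in~\cite{FR25}, by expanding $\log(1-x q^n)$ in power series, so that $f_{N,k}$ is a genuine $q$-series and $h_T=0$. Even if a constant in $2\pi\ri\ZZ$ appeared, it would be entire and would not affect the argument.

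For $\gamma_N$, we invoke~\cite[Thm.~3.10]{FR25}, which gives that $h_{\gamma_N}[f_{N,k}]$ and $h_{\gamma_N}[g_{N,k}]$ extend holomorphically to $\CC_{\gamma_N}=\CC\smallsetminus(-\infty,-1/N]$ for every $k\in G_N$. Applying this to both $k$ and $\underline{k}$ and subtracting gives holomorphy of each component of $h_{\gamma_N}$ on $\CC_{\gamma_N}$. It then follows for all $\gamma\in\Gamma_N$ through the cocycle relation. Since $\Om$ is trivial, slashing by $\gamma'$ just precomposes with $\gamma'\tau$. We also need to check that this preserves holomorphy on the appropriate domain $\CC_{\gamma\gamma'}$; this is the same bookkeeping as in the scalar case and is already contained in the definition of scalar quantum modularity used in~\cite{FR25}.

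I expect the main obstacle to be matching conventions with~\cite{FR25}. Specifically, one must confirm that Theorem~3.10 there is stated for all $k\in G_N$ (in particular, for $\underline{k}$ as well as $k$), and that it covers exactly the generator $\gamma_N$ with the domain $\CC_{\gamma_N}$ used in Definition~\ref{def: vv-QMF}. One must also check that the branch of the logarithm is chosen consistently in the scalar and vector settings. The check that $\Om$ is trivial is itself automatic, because antisymmetrization is a fixed, $\tau$-independent linear map and therefore commutes with the $\Gamma_N$-action on scalar functions.
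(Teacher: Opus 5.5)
Your proposal is correct and is essentially the paper's argument. The paper's proof simply cites \cite[Thm.~3.10]{FR25}, and with trivial multiplier and weight $0$ the cochain of ${\bf f}_N$ (resp.\ ${\bf g}_N$) is componentwise the difference $h_\gamma[f_{N,k}]-h_\gamma[f_{N,\underline{k}}]$ (resp.\ $h_\gamma[g_{N,k}]-h_\gamma[g_{N,\underline{k}}]$) of scalar cochains, exactly as you observe. That theorem already gives scalar quantum modularity for every $\gamma\in\Gamma_N$, so you can apply it directly for all $\gamma$. This skips the reduction to the generators $T,\gamma_N$ and the domain bookkeeping for $\CC_{\gamma\gamma'}$ that you leave somewhat informal.
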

\begin{proof}
    The statement follows from~\cite[Thm.~(3.10)]{FR25}. 
\end{proof}
As mentioned at the beginning of this section, the trivial multiplier system in Lemma~\ref{lem:mod_qpochh} implies that the vector-valued modular resurgent structure of ${\bf f}_N$ and ${\bf g}_N$ factors through the scalar resurgence of their components established in~\cite{FR25} and amounts to a Dirichlet-character repackaging of the latter. The situation differs sharply for the Eichler integrals of unary theta series of Section~\ref{sec:eichler}: there, the vectors of $q$-series carry a \emph{non-trivial} multiplier system over the full modular group, and their quantum modular behaviour is intrinsically vector-valued, with no scalar reduction available.

\section{The Eichler integral examples} \label{sec:eichler}

Eichler integrals of unary theta series~\cite{eichler1957verallgemeinerung, shimura1959integrales, LawrenceZagier99} play a recurring role across mathematics and mathematical physics.
In quantum topology, for instance, they appear as building blocks of the WRT invariants of Seifert-fibered homology spheres~\cite{LawrenceZagier99, HIKAMI_2005} and of the $\widehat{Z}$-invariants of weakly negative-definite plumbed three-manifolds~\cite{Gukov:2017kmk, Cheng:2018vpl, Cheng:2024vou}.
They also arise in quantum knot theory, in the computation of colored Jones polynomials of alternating knots~\cite{Garoufalidis:2011np, BeirneOsburn}, while  
in the representation theory of non-rational vertex operator algebras, they enter the characters of modules of $(1,N)$-singlet vertex algebras~\cite{Adamovic:2007er} and $(N,N')$-singlet vertex algebras~\cite{BringmannMilas15}.

From the resurgent and modular perspectives developed in this work, Eichler integrals of unary theta series provide a family of examples of vector-valued MRSs, satisfying the paradigm illustrated in Eq.~\eqref{diag:vvMR-diagram} and Conjectures~\ref{conj:vvMR-conj1} and~\ref{conj:vvMR-conj2}.
In contrast to the $q$-Pochhammer example of Section~\ref{sec: qPochh-vv}, the modular representation underlying the associated vector-valued quantum modular forms is non-trivial, and no change of basis reduces the vector-valued MRS to a collection of scalar MRSs; this example therefore demonstrates the necessity of the vector-valued framework introduced in Section~\ref{sec:vector-valued-MR}.

\subsection{Setting} \label{sec:eichler-setting}

Let us fix $N\in\IZ$ and $k\in\ZZ/{2N}$.
We consider the collection of functions ${\sf EI}[\theta^{(\nu)}_{N,k}] : \IH \to \CC$ given by
\begin{equation}\label{eq:tilde-th}
    {\sf EI}[\theta^{(\nu)}_{N,k}](\tau):=\sum_{n\in\ZZ}\text{sgn}(n)\,n^{1-\nu}\,\delta_{k,n\,(2N)}\,q^{\frac{n^2}{4N}}\,, \quad  \nu=0,1 \, ,
\end{equation}
where $\delta_{k,n\,(2N)}$ is the congruence delta function and $q=\re^{2 \pi \ri \tau}$, as in Section~\ref{sec: qPochh-vv}. These are Eichler integrals of the unary theta series
\begin{equation}\label{eq:theta-un}
        \theta^{(\nu)}_{N,k}(\tau):=\sum_{n\in\ZZ} n^{\nu}\,\delta_{k,n\,(2N)}\, q^{\frac{n^2}{4N}}\,.
\end{equation}
More precisely, ${\sf EI}[\theta^{(\nu)}_{N,k}]$ admits the integral representations\footnote{For fixed $t$ on the contour, the kernel $(t-\tau)^{-1/2}$ is normalized by $\arg(t-\tau)\to\tfrac{\pi}{2}$ as $t\to\ri\infty$, corresponding to the branch $\arg(t-\tau)\in(-\tfrac{\pi}{2},\tfrac{3\pi}{2})$, so that the cut in $t$ lies along the downward ray $\tau-\ri\,\IR_{\geq0}$, and the kernel is continuous on the contour for every $\tau$ off it. The cut in $\tau$ lies instead along the upward ray $t+\ri\,\IR_{\geq0}$, so the straight-contour integral defines a holomorphic function in $\tau$ on the complement of the vertical ray $L_\gamma:=-d/c+\ri\,\IR_{\geq0}$. We adopt this convention throughout Section~\ref{sec:eichler} and explicitly refer to it in the proof of Proposition~\ref{prop:qm-ei}.}
\begin{subequations}
\begin{align}
    {\sf EI}[\theta^{(0)}_{N,k}](\tau)&\=-\frac{\sqrt{2N\ri}}{\pi}\int_\tau^{\ri\infty}\frac{d\theta^{(0)}_{N,k}}{dt}(t)\,(t-\tau)^{-1/2}\, dt\,, \label{eq:tilde_int_0-setting} \\
    {\sf EI}[\theta_{N,k}^{(1)}](\tau)&\=\frac{1}{\sqrt{2N\ri}}\int_{\tau}^{\ri\infty} {\theta}_{N,k}^{(1)}(t)\, (t-\tau)^{-1/2}\, dt\,. \label{eq:tilde_int_1}
\end{align}
\end{subequations}
In particular, the $q$-series ${\sf EI}[\theta^{(1)}_{N,k}]$ is a \emph{false theta function}, the name deriving from the fact that its $q$-expansion is identical to that of the theta function $\theta^{(0)}_{N,k}$ apart from the sign factors, and is closely related to partial theta functions.

Similarly to Section~\ref{sec: qPochh-vv}, we can package the $q$-series in Eqs.~\eqref{eq:tilde-th} and~\eqref{eq:theta-un} into two pairs of length-$2N$ vectors. Namely,
\begin{equation} \label{eq:EI-vg}
    \bEI^{(\nu)}_{{\rm all},N}(\tau):=\big({\sf EI}[\theta^{(\nu)}_{N,k}](\tau)\big)_{k\in\IZ/2N}\,, \quad
    \boldsymbol\theta^{(\nu)}_{{\rm all},N}(\tau):=\big(\theta^{(\nu)}_{N,k}(\tau)\big)_{k\in\IZ/2N}\,, \quad \nu=0,1~.
\end{equation}
\begin{rmk}\label{rmk: parity-EI}
Under the change of variable $k \mapsto -k$ in Eqs.~\eqref{eq:tilde-th} and~\eqref{eq:theta-un}, the components of $\bEI^{(\nu)}_{{\rm all},N}$ and $\boldsymbol\theta^{(\nu)}_{{\rm all},N}$ satisfy the parity relations
\begin{equation}\label{eq:parity-EI}
{\sf EI}[\theta^{(\nu)}_{N,-k}](\tau) = (-1)^\nu\,{\sf EI}[\theta^{(\nu)}_{N,k}](\tau)\,, \quad \theta^{(\nu)}_{N,-k}(\tau) = (-1)^\nu\,\theta^{(\nu)}_{N,k}(\tau)\,,
\end{equation}
for $k \in \ZZ/2N$ and $\nu = 0,1$. For $\nu=1$, these force the fixed points $k=0$ and $k=N$ of the involution $k \mapsto -k$ to vanish identically: ${\sf EI}[\theta^{(1)}_{N,0}] = {\sf EI}[\theta^{(1)}_{N,N}] = 0$ and $\theta^{(1)}_{N,0} = \theta^{(1)}_{N,N} = 0$. Accordingly, we introduce the folded index set
\begin{equation}\label{eq:G-tilde-folded}
\widetilde{G}^{(\nu)}_{N} := \begin{cases} 
\{0, 1, 2, \ldots, N\} \subset \ZZ/2N & \mbox{if} \;\; \nu = 0 \\ 
\{1, 2, \ldots, N-1\} \subset \ZZ/2N & \mbox{if} \;\;\nu = 1 \end{cases} \, ,
\end{equation}
of cardinality $r_\nu:=|\widetilde{G}^{(\nu)}_{N}| = N + 1 - 2\nu$, and the length-$r_\nu$ folded vectors
\begin{equation}\label{eq:bEI-folded}
\bEI^{(\nu)}_N(\tau) := \big({\sf EI}[\theta^{(\nu)}_{N,k}](\tau)\big)_{k \in \widetilde{G}^{(\nu)}_{N}}\,, \quad
\boldsymbol\theta^{(\nu)}_N(\tau) := \big(\theta^{(\nu)}_{N,k}(\tau)\big)_{k \in \widetilde{G}^{(\nu)}_{N}}\,.
\end{equation}
By the parity relations in Eq.~\eqref{eq:parity-EI}, the folded vectors $\bEI^{(\nu)}_N$ and $\boldsymbol\theta^{(\nu)}_N$ of Eq.~\eqref{eq:bEI-folded} carry the same information as the full vectors $\bEI^{(\nu)}_{{\rm all},N}$ and $\boldsymbol\theta^{(\nu)}_{{\rm all},N}$ of Eq.~\eqref{eq:EI-vg}.
\end{rmk}
In this section, we study the resurgent and number-theoretic properties of the component functions ${\sf EI}[\theta^{(\nu)}_{N,k}]$ for $k\in\widetilde{G}^{(\nu)}_N$ and show that the vectors $\bEI^{(\nu)}_N$ for $\nu=0,1$ give rise to vector-valued MRSs according to Definition~\ref{def:vv-MRS} and satisfy the conjectures and (a variant of) the paradigm of Section~\ref{sec:vector-valued-MR}.

\paragraph{Connection to previous work.}

The interplay between modularity and resurgence for Eichler integrals of unary theta series has been investigated recently from several complementary directions. In particular, the work by Costin, Dunne, Gruen, and Gukov~\cite{Costin:2023kla} and the subsequent works~\cite{adams2025orientationreversalchernsimonsnatural, Costin:Mock26} approach the relation between Eichler integrals and their mock theta companions via the  extension $\hbar\mapsto-\hbar$ of Borel-resummed trans-series of Mordell-type integrals, decomposed into real and imaginary parts, similar in its procedure to the Eichler duality method discussed in~\cite{Cheng:2012qc}.
The quantum modularity of the holomorphic Eichler integrals ${\sf EI}[\theta^{(1)}_{N,k}]$ under the action of congruence subgroups was established by Goswami and Osburn~\cite{GO}, and the case of non-holomorphic Eichler integrals was treated by Bringmann and Rolen~\cite{BR}; the cocycle computation we carry out here extends these results to vector-valued $\mathsf{SL}_2(\IZ)$-modularity, uniformly for $\nu=0,1$.

\subsection{Theta series and their \texorpdfstring{$L$}{L}-functions}

We first recall some properties of unary theta series and their associated Dirichlet series that will be useful later. To begin with, the vectors of unary theta series are modular objects.
\begin{lemma}\label{lem:mod_theta}
    For $\nu=0,1$, the vector of unary theta series $\boldsymbol\theta^{(\nu)}_N(\tau)$, $\tau \in \IH$, defined in Eq.~\eqref{eq:bEI-folded}, is a vector-valued modular form for $\mathsf{SL}_2(\IZ)$ of weight $\omega = 1/2 + \nu$. Namely,
    \be \label{eq:mod_trans}
        \boldsymbol\theta^{(\nu)}_N|_\omega\gamma = \boldsymbol\theta^{(\nu)}_N \, , \quad \forall \gamma\in\mathsf{SL}_2(\ZZ) \, ,
    \ee
    where $|_\omega$ is the weight-$\omega$ slash operator in Eq.~\eqref{eq: slash} with multiplier system $\Om^{(\nu)} : \mathsf{SL}_2(\IZ) \to \mathsf{GL}_{r_\nu}(\CC)$ specified in Remark~\ref{rmk: unary-identities}. Equivalently, 
    \be \label{eq:mod_trans-2}
        \boldsymbol\theta^{(\nu)}_N\left(\tfrac{a\tau+b}{c\tau+d}\right) = (c \tau + d)^\omega \Om^{(\nu)}(\gamma) \boldsymbol\theta^{(\nu)}_N(\tau) \, , \quad \forall \gamma=\left( \begin{smallmatrix}
    a & b\\
    c & d
\end{smallmatrix} \right)\in\mathsf{SL}_2(\IZ) \, .
    \ee
\end{lemma}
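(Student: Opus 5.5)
Since $\mathsf{SL}_2(\IZ)$ is generated by $T=\left(\begin{smallmatrix}1&1\\0&1\end{smallmatrix}\right)$ and $S=\left(\begin{smallmatrix}0&-1\\1&0\end{smallmatrix}\right)$, the plan is to verify Eq.~\eqref{eq:mod_trans-2} for these two generators and then propagate it to the whole group. For the propagation we define $\Om^{(\nu)}$ on words in $S,T$ and use the cocycle identity in Eq.~\eqref{eq: cocycle-id}. This is consistent because $\boldsymbol\theta^{(\nu)}_{{\rm all},N}$ is the standard (odd/even part of the) theta vector of the rank-one lattice $\sqrt{2N}\IZ$. Its transformation law is the Weil representation of $\mathsf{Mp}_2(\IZ)$ attached to the discriminant form $\IZ/2N$ with quadratic form $k^2/4N$. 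Folding by parity restricts this representation to the $(-1)^\nu$-eigenspace of $k\mapsto -k$, which by Remark~\ref{rmk: parity-EI} is spanned by $\widetilde G^{(\nu)}_N$. So the multiplier is genuinely only projective on $\mathsf{SL}_2(\IZ)$, as anticipated in Section~\ref{sec:QMF-review}.

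\emph{Step 1 ($T$).} Substitute $\tau\mapsto\tau+1$ termwise in Eq.~\eqref{eq:theta-un}. Since $n\equiv k\pmod{2N}$ gives $n^2/4N\equiv k^2/4N \pmod 1$, we obtain
\be
\theta^{(\nu)}_{N,k}(\tau+1)=\re^{2\pi\ri k^2/4N}\,\theta^{(\nu)}_{N,k}(\tau)\,.
\ee
Hence $\Om^{(\nu)}(T)=\mathrm{diag}\big(\re^{\pi\ri k^2/2N}\big)_{k\in\widetilde G^{(\nu)}_N}$.

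\emph{Step 2 ($S$), the main computation.} Apply Poisson summation to $x\mapsto x^\nu \re^{\pi\ri (x)^2\tau/2N}$ on the coset $k+2N\IZ$. Equivalently, use the Jacobi transformation of $\sum_{n\in\IZ}(n+\alpha)^\nu \re^{\pi\ri\tau(n+\alpha)^2}$ with $\alpha=k/2N$ and $\tau$ rescaled by $2N$. This gives
\be
\theta^{(\nu)}_{N,k}(-1/\tau)=\frac{(-\ri\tau)^{1/2}}{\sqrt{2N}}\,(\ri\tau)^{\nu}\cdots\sum_{\ell\in\IZ/2N}\re^{-2\pi\ri k\ell/2N}\,\theta^{(\nu)}_{N,\ell}(\tau)\,,
\ee
up to a precise power of $\ri$ that must be tracked. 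Pulling out $\tau^{1/2+\nu}$ with the principal branch yields a constant matrix: $\frac{1}{\sqrt{2N}}\re^{-\pi\ri/4}$ times the discrete Fourier matrix for $\nu=0$, and an extra factor of $-\ri$ for $\nu=1$. We then fold using the parity relations in Eq.~\eqref{eq:parity-EI}, pairing $\ell$ with $-\ell$. The Fourier kernel becomes $\cos(\pi k\ell/N)$ for $\nu=0$, with weights $1/2$ at $\ell=0,N$ to avoid double counting, and $-2\ri\sin(\pi k\ell/N)$ for $\nu=1$. This defines $\Om^{(\nu)}(S)$, the $S$-matrix to be recorded in Remark~\ref{rmk: unary-identities}.

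\emph{Step 3 (extension).} Define $\Om^{(\nu)}$ on all of $\mathsf{SL}_2(\IZ)$ through ${\bf j}(\tau;\gamma):=(c\tau+d)^{-\omega}\boldsymbol\theta^{(\nu)}_N(\gamma\tau)\boldsymbol\theta^{(\nu)}_N(\tau)^{-1}$, understood componentwise via linear independence. More cleanly, check that Steps 1--2 satisfy the Weil-representation relations $S^2=(ST)^3$ and $S^4=1$ up to the sign ambiguity of $(c\tau+d)^\omega$. Then Eq.~\eqref{eq:mod_trans-2} for arbitrary $\gamma$ follows by induction on word length, since both sides obey the cocycle rule in Eq.~\eqref{eq: cocycle-id}, and Eq.~\eqref{eq:mod_trans} is just a rewriting via the slash operator in Eq.~\eqref{eq: slash}.

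The hardest part will be the branch bookkeeping in Steps 2--3. The $\nu=1$ Poisson computation introduces $\tau^{3/2}$ and a factor of $\ri$, and for half-integral $\omega$ the principal branch of $(c\tau+d)^\omega$ can introduce signs when composing generators. I would first settle the $S$ case via the standard Jacobi identity, and treat $\Om^{(\nu)}$ as the projection to $\mathsf{SL}_2(\IZ)$ of the genuine Weil representation on $\mathsf{Mp}_2(\IZ)$, so that the projective signs are accounted for rather than verified ad hoc.
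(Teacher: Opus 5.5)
Your proposal follows the paper's proof: check $T$ directly, obtain $S$ by Poisson summation, then fold by the parity relations. Your Step~3 only makes explicit the extension from the generators that the paper takes for granted; since the components have disjoint $q$-supports, $\Om^{(\nu)}(\gamma)$ is already uniquely determined, so you don't need to check a presentation of $\mathsf{Mp}_2(\IZ)$. When you fix the phase for $\nu=1$, it should come out as $\Om^{(1)}_{kk'}(S)=\frac{-2\ri}{\sqrt{2\ri N}}\sin(\pi kk'/N)$, and you can confirm this against $\Om^{(1)}(S)^2=\ri\,\boldsymbol{1}$, which is forced by $S^2\tau=\tau$.
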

\begin{proof}
It suffices to verify Eq.~\eqref{eq:mod_trans} on the generators $T = \left(\begin{smallmatrix} 1 & 1 \\ 0 & 1 \end{smallmatrix}\right)$ and $S = \left(\begin{smallmatrix} 0 & -1 \\ 1 & 0 \end{smallmatrix}\right)$ of $\mathsf{SL}_2(\IZ)$. The verification is immediate for $\gamma = T$ and follows from Poisson summation for $\gamma = S$. Folding the resulting transformation by the parity relation in Eq.~\eqref{eq:parity-EI} yields the explicit multiplier system in Remark~\ref{rmk: unary-identities}.
\end{proof}

\begin{rmk} \label{rmk: unary-identities}
By definition, the multiplier system $\Om^{(\nu)}: \mathsf{SL}_2(\IZ) \to \mathsf{GL}_{r_\nu}(\CC)$ of Lemma~\ref{lem:mod_theta} acts on the vector $\boldsymbol\theta^{(\nu)}_N$ of length $r_\nu= N + 1 - 2\nu$. 
On the generators of $\mathsf{SL}_2(\IZ)$, its entries are
\be \label{eq: Om}
\Om^{(\nu)}_{kk'}(T) \= \delta_{kk'}\, \re^{2\pi\ri \frac{k^2}{4N}}\,, \quad
\Om^{(\nu)}_{kk'}(S) \= \frac{(-\ri)^\nu \, {|\mathcal{O}_{k'}|}}{\sqrt{2\ri N}} \times
\begin{cases} \cos\!\big(\tfrac{k k' \pi}{N}\big) & \mbox{if} \;\; \nu = 0 \\[2pt] \sin\!\big(\tfrac{k k' \pi}{N}\big) & \mbox{if} \;\; \nu = 1 \end{cases} \, ,
\ee
for $k, k' \in \widetilde{G}^{(\nu)}_N$. Here, $\mathcal{O}_{k'} := \{k', -k'\} \subset \ZZ/2N$ denotes the orbit of $k'$ under the parity involution $k' \mapsto -k'$, so that $|\mathcal{O}_{k'}| = 1$ at the fixed points $k' \in \{0, N\}$ and $|\mathcal{O}_{k'}| = 2$ at the other points. In particular, $|\mathcal{O}_{k'}| = 2$ for all $k' \in \widetilde{G}^{(1)}_N$.

Observe that the unfolded vector $\boldsymbol\theta^{(\nu)}_{{\rm all},N}(\tau)$, $\tau \in \HH$, defined in Eq.~\eqref{eq:EI-vg}, satisfies a modular transformation analogous to Eq.~\eqref{eq:mod_trans}, that is, 
\be \label{eq:mod_trans-all}
\boldsymbol\theta^{(\nu)}_{{\rm all},N}|_\omega\gamma = \boldsymbol\theta^{(\nu)}_{{\rm all},N} \, , \quad  \forall \gamma \in \mathsf{SL}_2(\IZ)\, , 
\ee
with unfolded multiplier system $\widehat\Om^{(\nu)} : \mathsf{SL}_2(\IZ) \to \mathsf{GL}_{2N}(\CC)$. On the generators of $\mathsf{SL}_2(\IZ)$, the latter has entries
\be \label{eq: Om-tilde}
\widehat\Om^{(\nu)}_{kk'}(T) \= \delta_{kk'}\, \re^{2\pi\ri \frac{k^2}{4N}}\,, \quad
\widehat\Om^{(\nu)}_{kk'}(S) \= \frac{(-1)^\nu}{\sqrt{2\ri N}}\, \re^{\pi\ri \frac{k k'}{N}}\,, 
\ee
for $k, k' \in \ZZ/2N$, from which the elements in Eq.~\eqref{eq: Om} are induced simply by folding the sum on the right-hand side of Eq.~\eqref{eq:mod_trans-all} by means of the parity relations in Eq.~\eqref{eq:parity-EI}.

Since $\omega = 1/2 + \nu$ is half-integral, $\Om^{(\nu)}$ is a projective representation of $\mathsf{SL}_2(\IZ)$ and lifts to a genuine homomorphism of the metaplectic double cover $\mathsf{Mp}_2(\IZ)$, while $S, T$ lift to metaplectic generators. In particular, $\Om^{(\nu)}(S)$ has order $8$: a direct computation starting from Eq.~\eqref{eq: Om} gives $\Om^{(\nu)}(S)^2 = (-1)^{\nu+1}\ri\,\boldsymbol{1}$ and hence $\Om^{(\nu)}(S)^4 = -\boldsymbol{1}$, reflecting the fact that the lift of $S$ in $\mathsf{Mp}_2(\IZ)$ has order $8$.
\end{rmk}

The unary theta series $\theta^{(\nu)}_{N,k}$ are naturally associated with the \emph{theta-Mellin Dirichlet series}
\begin{equation}\label{eq:L}
    \CCL^{(\nu)}_k(s):=\sum_{m=1}^\infty\frac{\omega^{(\nu)}_k(m;N)}{m^{2s-\nu}}\,, \quad k \in \IZ/2N \, , \quad \nu =0,1 \, , 
\end{equation}
where $\omega^{(\nu)}_k(a;N)$ with $a\in\ZZ$ is the residue-class arithmetic function
\begin{equation}\label{eq:omega-def}
    \omega^{(\nu)}_k(a;N):=\delta_{k,a\,(2N)}+(-1)^\nu\delta_{k,-a\,(2N)}\,.
\end{equation}
The latter is $2N$-periodic in both $a$ and $k$.
For later convenience, we introduce the \emph{canonical Dirichlet series}
\begin{equation}\label{eq:D-canonical}
    D^{(\nu)}_k(u):=\sum_{m=1}^\infty\frac{\omega^{(\nu)}_k(m;N)}{m^{u}}\,, 
\end{equation}
which is related to $\CCL^{(\nu)}_k(s)$ by the shift identity
\begin{equation}\label{eq:CCL-D-identity}
    \CCL^{(\nu)}_k(s) \= D^{(\nu)}_k(2s-\nu)\,.
\end{equation}
Note that the factor of $2$ in the input $2s-\nu$ on the right-hand side matches the quadratic progression of the exponent of $q^{n^2}$ in the expression for the theta series. This factor of $2$ will also be responsible for the relation between the naive and the natural Borel variables $\xi=\zeta^2$ in the following subsection. 
We further define the \emph{completed Dirichlet series}
\begin{equation}\label{eq:Lambda-def}
    \Lambda^{(\nu)}_k(s) := \left(\frac{2N}{\pi}\right)^s \Gamma(s)\, \CCL^{(\nu)}_k(s) \= \left(\frac{2N}{\pi}\right)^s \Gamma(s)\, D^{(\nu)}_k(2s-\nu)\,,
\end{equation}
which, as we will see in the proof of Lemma~\ref{lem:functionalRel-Lfn-theta}, coincides with the Mellin transform of $\theta^{(\nu)}_{N,k}$ along the imaginary axis.
The parity relation for the theta series in Eq.~\eqref{eq:parity-EI} carries through to the associated Dirichlet series, implying
\be
\CCL^{(\nu)}_{-k}(s) = (-1)^\nu\,\CCL^{(\nu)}_k(s) \, ,
\ee
and analogously for $D^{(\nu)}_k$ and $\Lambda^{(\nu)}_k$. Therefore, similarly to Eq.~\eqref{eq:bEI-folded}, we collect them into the length-$r_\nu$ folded vectors
\begin{equation}\label{eq:Dirichlet-vec}
\pmb{\mathscr{L}}^{(\nu)}_N(s) := \big(\CCL^{(\nu)}_k(s)\big)_{k \in \widetilde{G}^{(\nu)}_N}\,, \quad
\boldsymbol{D}^{(\nu)}_N(u) := \big(D^{(\nu)}_k(u)\big)_{k \in \widetilde{G}^{(\nu)}_N}\,, \quad
\boldsymbol{\Lambda}^{(\nu)}_N(s) := \big(\Lambda^{(\nu)}_k(s)\big)_{k \in \widetilde{G}^{(\nu)}_N}\,,
\end{equation}
which admit meromorphic continuation to the whole complex plane as follows.
\begin{lemma}\label{lem:functionalRel-Lfn-theta}
    For $\nu=0,1$, the vector of completed Dirichlet series $\boldsymbol\Lambda^{(\nu)}_N$, defined in Eq.~\eqref{eq:Dirichlet-vec}, satisfies the functional equation
    \begin{equation}\label{eq:func-L}
        \boldsymbol\Lambda^{(\nu)}_N(s) \= \ri^\omega \, \Om^{(\nu)}(S) \, \boldsymbol\Lambda^{(\nu)}_N(\omega-s) \, , \quad s \in \CC \, ,
    \end{equation}
    where $\omega = 1/2 + \nu$ and $\Om^{(\nu)}(S) \in \mathrm{Mat}_{r_\nu \times r_\nu}(\CC)$ is the $S$-matrix introduced in Lemma~\ref{lem:mod_theta}.
\end{lemma}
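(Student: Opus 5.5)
The plan is the classical Hecke argument: write $\boldsymbol\Lambda^{(\nu)}_N$ as a Mellin transform of $\boldsymbol\theta^{(\nu)}_N$ along the imaginary axis, then use the $S$-transformation of Lemma~\ref{lem:mod_theta} to fold the integral at $t=1$.

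\emph{Step 1 (Mellin representation).} For $\Re(s)$ large, substitute $\tau=\ri t$ in Eq.~\eqref{eq:theta-un}. The term $n=0$ appears only for $\nu=0$ and $k=0$, where it contributes the constant $1$; set it aside. The terms $\pm n$ combine into $\omega^{(\nu)}_k(n;N)\,n^\nu\re^{-\pi n^2 t/(2N)}$. Integrating termwise with $\int_0^\infty t^{s-1}\re^{-\pi n^2t/(2N)}dt=\Gamma(s)(2N/\pi)^s n^{-2s}$ gives
\[
\Lambda^{(\nu)}_k(s)=\int_0^\infty t^{s-1}\big(\theta^{(\nu)}_{N,k}(\ri t)-\delta_{\nu,0}\delta_{k,0}\big)\,dt .
\]
This uses Eq.~\eqref{eq:Lambda-def} and the fact that each residue class $m>0$ is counted once after folding.

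\emph{Step 2 (splitting and modularity).} Split the integral at $t=1$. On $(0,1)$, substitute $t\mapsto 1/t$ and apply Eq.~\eqref{eq:mod_trans-2} with $\gamma=S$ at $\tau=\ri/t$. Since $S(\ri/t)=\ri t$ and $c\tau+d=\ri/t$, we get
\[
\boldsymbol\theta^{(\nu)}_N(\ri t)=(\ri/t)^{\omega}\,\Om^{(\nu)}(S)\,\boldsymbol\theta^{(\nu)}_N(\ri/t),
\]
and inverting gives $\boldsymbol\theta^{(\nu)}_N(\ri/t)=(-\ri t)^{\omega}\,\Om^{(\nu)}(S)^{-1}\boldsymbol\theta^{(\nu)}_N(\ri t)$. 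It is cleaner to apply the transformation directly in the form that produces $\Om^{(\nu)}(S)$ rather than its inverse: write $\boldsymbol\theta(\ri/t)=\boldsymbol\theta(S(\ri t))=(\ri t)^\omega\Om(S)\boldsymbol\theta(\ri t)$ using the principal branch. The $(0,1)$ piece then becomes
\[
\ri^\omega\,\Om^{(\nu)}(S)\int_1^\infty t^{\omega-s-1}\,\boldsymbol\theta^{(\nu)}_N(\ri t)\,dt ,
\]
plus elementary boundary terms coming from the constant term. This yields
\[
\boldsymbol\Lambda^{(\nu)}_N(s)=\int_1^\infty t^{s-1}\boldsymbol\theta^{\circ}(\ri t)\,dt+\ri^\omega\Om^{(\nu)}(S)\int_1^\infty t^{\omega-s-1}\boldsymbol\theta^{\circ}(\ri t)\,dt+(\text{polar terms}).
\]
Both integrals are entire because $\boldsymbol\theta^\circ$, the theta vector with its constant term removed, decays exponentially.

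\emph{Step 3 (symmetry).} Apply the same representation at $\omega-s$ and multiply by $\ri^\omega\Om^{(\nu)}(S)$. The identity $(\ri^\omega\Om^{(\nu)}(S))^2=\boldsymbol 1$ swaps the two integrals. This identity follows from Remark~\ref{rmk: unary-identities}: $\Om^{(\nu)}(S)^2=(-1)^{\nu+1}\ri\,\boldsymbol1$ and $\ri^{2\omega}=\ri^{1+2\nu}=(-1)^\nu\ri$, so their product is $-\ri^2=1$. The symmetry then gives Eq.~\eqref{eq:func-L} on all of $\CC$ by meromorphic continuation.

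\emph{Main obstacle.} For $\nu=1$ there is no constant term and the argument is clean. For $\nu=0$ the polar terms must be checked to match. The $k=0$ component has constant term $1$. The $S$-transformed constant terms form the column $\Om^{(0)}(S)e_0=(2\ri N)^{-1/2}(1,\dots,1)^{\mathsf T}$, which produces poles at $s=\omega$ in every component, while $e_0$ produces a pole at $s=0$. I would verify the bookkeeping by using the same $(\ri^\omega\Om(S))^2=1$ identity on the polar parts. The other delicate point is the branch of $(\ri t)^{\omega}$ and the factor $\ri^\omega$; I expect these to be consistent with the principal-branch convention of Eq.~\eqref{eq: slash}.
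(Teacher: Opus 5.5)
Your proof is correct. It uses the same Hecke--Mellin mechanism as the paper, but you organise it differently.

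\textbf{The paper's route.} The paper does not split the integral. It substitutes $t\mapsto 1/t$ on the whole half-line $(0,\infty)$ and applies $\boldsymbol\theta^{(\nu)}_N(\ri/t)=(\ri t)^{\omega}\,\Om^{(\nu)}(S)\,\boldsymbol\theta^{(\nu)}_N(\ri t)$ once. It then reads off $\ri^\omega\,\Om^{(\nu)}(S)\,\boldsymbol\Lambda^{(\nu)}_N(\omega-s)$ directly, without ever using $(\ri^\omega\Om^{(\nu)}(S))^2=\boldsymbol 1$.

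\textbf{Where that shortcut holds.} For $\nu=1$ it is legitimate. By cuspidality, and by the $S$-transformation near $t\to 0^+$, $\boldsymbol\theta^{(1)}_N(\ri t)$ decays exponentially at both ends, so the Mellin integral is entire and agrees with $\boldsymbol\Lambda^{(1)}_N$ on $\Re(s)>1$.

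For $\nu=0$ it is only formal. The $k=0$ component has constant term $1$, and its Mellin integral diverges for every $s$; the paper itself concedes this in Remark~\ref{rmk: theta-0-rmk}. Because $\Om^{(0)}(S)$ mixes all components, that divergent integral contaminates every entry on the right-hand side.

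\textbf{What your route buys.} Splitting at $t=1$ after removing the constant term is exactly what makes the $\nu=0$ case honest. It gives the continuation to $\CC$ explicitly. It also locates the poles: $-{\bf e}_0/s$ at $s=0$, and $\ri^{1/2}\,\Om^{(0)}(S)\,{\bf e}_0/(s-\tfrac12)=(2N)^{-1/2}\,\boldsymbol 1/(s-\tfrac12)$ at $s=\tfrac12$. These match the residue $\tfrac{1}{2N}$ of $\CCL^{(0)}_k$ at $s=\tfrac12$ used in Lemma~\ref{lem:asympfalsetheta}.

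Your polar bookkeeping closes as you anticipated. Write $A=\ri^\omega\Om^{(0)}(S)$. The map $s\mapsto\omega-s$ followed by multiplication by $A$ sends $\frac{A{\bf e}_0}{s-\omega}-\frac{{\bf e}_0}{s}$ to $-\frac{A^2{\bf e}_0}{s}+\frac{A{\bf e}_0}{s-\omega}$, which is the same expression once $A^2=\boldsymbol 1$.

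\textbf{The price.} Your route needs the involution $(\ri^\omega\Om^{(\nu)}(S))^2=\boldsymbol 1$ as an extra input, which you verify correctly from Remark~\ref{rmk: unary-identities}. The same identity is why the two forms of the $S$-transformation you wrote in Step~2, one with $\Om^{(\nu)}(S)^{-1}$ and one with $\Om^{(\nu)}(S)$, agree. The paper's one-shot substitution needs only a single application of modularity.
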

\begin{proof}
    For $k \in \widetilde{G}^{(\nu)}_N$, the Mellin transform of the theta series $\theta_{N,k}^{(\nu)}$ is
    \begin{equation}
    \begin{aligned}
        \int_0^\infty \theta_{N,k}^{(\nu)}(\ri t)\, t^{s-1}\;dt&= \int_0^\infty \sum_{n\in\ZZ}n^\nu\,\delta_{k,n\,(2N)}\, \re^{-\frac{\pi n^2}{2N} t}\, t^{s-1}\;dt\\
        &=\sum_{n\in\ZZ}n^\nu\,\delta_{k,n\,(2N)} \left( \frac{2N}{\pi n^2} \right)^s \,\Gamma(s)\\
        &=\Lambda^{(\nu)}_{k}(s)\,,
    \end{aligned}
    \end{equation}
    which we write equivalently in vector form as
    \be
    \int_0^\infty \boldsymbol\theta^{(\nu)}_N(\ri t)\, t^{s-1}\;dt = \boldsymbol\Lambda^{(\nu)}_N(s) \, . 
    \ee
    Applying the modular property in Lemma~\ref{lem:mod_theta} and integrating term-by-term yields
    \begin{equation}
        \begin{aligned}
            \int_0^\infty \boldsymbol\theta^{(\nu)}_N(\ri t)\,t^{s-1}\,dt&=\int_0^\infty \boldsymbol\theta^{(\nu)}_N(-1/\ri t)\,t^{-1-s}\,dt\\
            &=\ri^\omega \,\Om^{(\nu)}(S)\, \int_0^\infty \boldsymbol\theta^{(\nu)}_N(\ri t)\,t^{-1+\omega-s}\,dt\\
            &=\ri^\omega \,\Om^{(\nu)}(S)\, \boldsymbol\Lambda^{(\nu)}_N(\omega-s)\,,
        \end{aligned}
    \end{equation}
    where $\omega=1/2 +\nu$, thus proving Eq.~\eqref{eq:func-L}.
\end{proof}

When $(k,2N)=g>1$, it is natural to consider the theta series
$\theta^{(\nu)}_{N/g,k/g}(\tau)=g^{-\nu} \theta^{(\nu)}_{N,k}(\tau/g)$ of \emph{reduced index} $N/g$ and define accordingly 
\begin{equation}\label{eq:L-new}
  \tilde\CCL^{ (\nu)}_k(s):=\sum_{m=1}^\infty\frac{\omega^{(\nu)}_{k/g}(m;N/g)}{m^{2s-\nu}}= g^{2s-\nu} \CCL^{(\nu)}_k(s) \,, \quad \nu = 0,1,
\end{equation}
using that $\omega^{(\nu)}_{k/g}(m;N/g)= \omega^{(\nu)}_{k}(mg;N)$.
Note that the Dirichlet series $\CCL^{(\nu)}_{k}$ and $\tilde\CCL^{(\nu)}_{k}$ in Eqs.~\eqref{eq:L} and~\eqref{eq:L-new} do not satisfy an Euler product expansion because their coefficients are not multiplicative. Thus, they are not $L$-functions in the sense of Definition~\ref{def: Lfunct}. Nonetheless, we prove below that they can be expressed as linear combinations of a (finite) collection of Dirichlet $L$-functions.

\begin{lemma} \label{lem: decomp-ei}
For $\nu=0,1$ and $k \in \IZ/2N$ with $(k,2N)=1$, the Dirichlet series $\CCL^{(\nu)}_{k}$, defined in Eq.~\eqref{eq:L}, is in the $\CC$-span of Dirichlet $L$-functions modulo $2N$. 
More generally, for $k \in \IZ/2N$ satisfying $(k,2N)=g$, the Dirichlet series $\tilde\CCL^{(\nu)}_{k}$, defined in Eq.~\eqref{eq:L-new}, is in the $\CC$-span of Dirichlet $L$-functions modulo $2N/g$. 
\end{lemma}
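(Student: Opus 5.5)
The plan is to reuse the Fourier decomposition of Lemma~\ref{lem: fourier}, now on the group $G_{2N}=(\ZZ/2N)^\times$ instead of $G_N$. We first treat $(k,2N)=1$. Write $\CCL^{(\nu)}_k(s)=D^{(\nu)}_k(2s-\nu)$ by Eq.~\eqref{eq:CCL-D-identity}, so it suffices to decompose $D^{(\nu)}_k(u)=\sum_{m\ge1}\omega^{(\nu)}_k(m;N)\,m^{-u}$.

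The key observation is that $\omega^{(\nu)}_k(m;N)$ vanishes unless $m\equiv\pm k \pmod{2N}$. Since $(k,2N)=1$, this forces $(m,2N)=1$. Hence $\omega^{(\nu)}_k(\,\cdot\,;N)$ is supported on $G_{2N}$ (extended by zero), and we can expand it by the inverse Fourier transform in Eq.~\eqref{eq: invFT}:
\be
\omega^{(\nu)}_k(m;N)=\frac{1}{\phi(2N)}\sum_{\chi \bmod 2N}\widehat{\omega}(\chi)\,\chi(m)\,,
\ee
with $\widehat{\omega}(\chi)=\overline{\chi}(k)+(-1)^\nu\overline{\chi}(-k)=\overline{\chi}(k)\big(1+(-1)^\nu\chi(-1)\big)$. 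This coefficient vanishes unless $\chi(-1)=(-1)^\nu$, which is exactly the parity statement of Lemma~\ref{lem: fourier}. Summing over $m$ (this is the trivial weight $c\equiv1$, with no divisor sum) gives, for $\Re(u)>1$,
\be
D^{(\nu)}_k(u)=\frac{2}{\phi(2N)}\sum_{\chi(-1)=(-1)^\nu}\overline{\chi}(k)\,L(\chi,u)\,.
\ee
Substituting $u=2s-\nu$ exhibits $\CCL^{(\nu)}_k$ as a finite $\CC$-linear combination of $L(\chi,2s-\nu)$, with $\chi$ running over the Dirichlet characters modulo $2N$ of parity $(-1)^\nu$. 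These are Dirichlet $L$-functions modulo $2N$, evaluated at the affinely rescaled argument $2s-\nu$; the rescaling is the same as in the definition of $\CCL$ itself. Absolute convergence on $\Re(s)>(1+\nu)/2$ justifies exchanging the finite and infinite sums. If one insists on primitive $L$-functions, the imprimitive ones can be rewritten by the M\"obius factorization in Eq.~\eqref{eq:L-funct-imp}, but this is not needed here.

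For general $g=(k,2N)$, we use Eq.~\eqref{eq:L-new}. There $\tilde\CCL^{(\nu)}_k(s)=\sum_m \omega^{(\nu)}_{k/g}(m;N/g)\,m^{-(2s-\nu)}$, where now $(k/g,\,2N/g)=1$, so the first case applies verbatim with $(N,k)$ replaced by $(N/g,k/g)$. This gives a decomposition into $L$-functions modulo $2N/g$. One subtlety must be handled in this reduction. Since $g$ divides $2N$ but need not divide $N$, the modulus $2N/g$ may be odd, in which case $N/g$ is not an integer. We then define $\omega^{(\nu)}_{k/g}(m;N/g)$ directly as $\delta_{k/g,m\,(2N/g)}+(-1)^\nu\delta_{k/g,-m\,(2N/g)}$, which is all the argument uses; the identity $\omega^{(\nu)}_{k/g}(m;N/g)=\omega^{(\nu)}_k(mg;N)$ remains valid. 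The degenerate cases also need a check. If $2N/g\in\{1,2\}$, then $k/g\equiv-k/g$, and the sign in the definition of $\omega$ is unconstrained by the involution. In that case $\omega$ is either $0$ or twice the indicator of $G_{2N/g}$, and the formula still holds, giving a multiple of $\zeta$ (or of the trivial character), or zero when $\nu=1$.

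I expect the main obstacle to be none of the analysis, which is routine. It is bookkeeping: confirming that the support of $\omega$ lies inside the unit group, so that no non-coprime residues are lost by the Fourier inversion, and treating the degenerate fixed-point cases of the involution uniformly.
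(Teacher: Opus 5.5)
Your proposal is correct and follows the paper's proof essentially step by step. You Fourier-expand $\omega^{(\nu)}_k(\,\cdot\,;N)$ on $G_{2N}$, use parity to keep only the characters with $\chi(-1)=(-1)^\nu$, sum to write $D^{(\nu)}_k(u)$ as a combination of $L(\chi,u)$ and substitute $u=2s-\nu$, and reduce the case $(k,2N)=g$ to the coprime case at modulus $2N/g$. Your extra checks are handled correctly and fill in points the paper leaves implicit: that $\omega$ is supported on the unit group, that $N/g$ need not be an integer (only the modulus $2N/g$ matters), and that the degenerate moduli $1,2$ behave as claimed.
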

\begin{proof}
We will first consider the case of $k$ coprime to $2N$. Let $\phi$ be Euler's totient function and $\chi_{j}$ with $j \in J_{2N}$ be the Dirichlet characters of modulus~$2N$, which we index analogously to Section~\ref{sec: qPochh-vv}. In particular, $|J_{2N}| = \phi(2N)$ and $J_{2N} = \Jeven_{2N} \sqcup \Jodd_{2N}$ (see Eq.~\eqref{eq:Jeven-Jodd}).
The characters $\chi_j$ provide a Fourier basis of the space of complex-valued functions on $G_{2N}$ and form the character group $\widehat{G}_{2N}$. 
We follow closely the arguments presented in the proof of Proposition~\ref{prop:vv-qpoch} in Section~\ref{sec: qPochh-vv}.

We consider the arithmetic functions $\omega^{(\nu)}_k(\,\cdot\,;N): G_{2N} \to \CC$, defined in Eq.~\eqref{eq:omega-def}, which appear as coefficients of the Dirichlet series in Eq.~\eqref{eq:L}, and express them as inverse Fourier transforms via Eq.~\eqref{eq: invFT}. Namely,
\be
    \omega^{(\nu)}_k(a;N)\=\frac{1}{\phi(2N)}\sum_{j \in J_{2N}}\,\widehat{\omega_{k}}^{(\nu)}(\chi_{j})\,\chi_{j}(a)\,. \label{eq: invFT-omega-Ei}
\ee
The functions $\widehat{\omega_k}^{(\nu)} : \widehat{G}_{2N}\to\CC$ can then be computed via Eq.~\eqref{eq: FT} as
\begin{equation} \label{eq: FT-omegaEI}
    \widehat{\omega_k}^{(\nu)}(\chi_{j})\=\big[1+(-1)^{\nu}\chi_{j,\ast}(-1) \big] \overline{\chi_{j,\ast}}(k) \, , 
\end{equation}
where $\chi_{j, \ast}$ is the primitive Dirichlet character of modulus $D_j$ that induces $\chi_j$.
Note that odd (resp. even) Dirichlet characters do not contribute to the linear combinations in Eq.~\eqref{eq: invFT-omega-Ei} for $\nu=0$ (resp. $\nu=1$). 
As a result, substituting Eq.~\eqref{eq: FT-omegaEI} into Eq.~\eqref{eq: invFT-omega-Ei} yields
\be\label{eq:omega-decomp}
    \omega^{(0)}_k(a;N)\=\sum_{j\in \Jeven_{2N}}\,2\frac{\overline{\chi_{j,\ast}}(k)}{\phi(2N)}\,\chi_{j}(a) \, , \quad
    \omega^{(1)}_k(a;N)\=\sum_{j\in \Jodd_{2N}}\,2\frac{\overline{\chi_{j,\ast}}(k)}{\phi(2N)}\,\chi_{j}(a) \, ,
\ee
which implies, at the level of the canonical Dirichlet series $D^{(\nu)}_k$ in Eq.~\eqref{eq:D-canonical}, the decompositions
\be\label{eq:decomp_ei}
    D^{(0)}_{k}(u)\=\sum_{j\in \Jeven_{2N}}\,2\frac{\overline{\chi_{j,\ast}}(k)}{\phi(2N)}\,L_j(u) \, , \quad
    D^{(1)}_{k}(u)\=\sum_{j\in \Jodd_{2N}}\,2\frac{\overline{\chi_{j,\ast}}(k)}{\phi(2N)}\,L_j(u) \, ,
\ee
where we introduced the Dirichlet $L$-functions
\begin{equation}\label{eq:calL-Ei}
    L_{j}(s):=L(\chi_{j},s)=\sum_{m=1}^\infty\frac{\chi_{j}(m)}{m^s} \, .
\end{equation}
Via the identity in Eq.~\eqref{eq:CCL-D-identity}, this is equivalent to the theta-Mellin Dirichlet series decompositions
\begin{equation}\label{eq:decomp_ei1-new}
    \CCL^{(0)}_{k}(s)=\sum_{j \in \Jeven_{2N}} 2 \frac{\overline{\chi_{j,\ast}}(k)}{\phi(2N)}\, L_j(2s)\,, \quad
    \CCL^{(1)}_{k}(s)=\sum_{j \in \Jodd_{2N}} 2 \frac{\overline{\chi_{j,\ast}}(k)}{\phi(2N)}\, L_j(2s-1)\,,
\end{equation}
in which the factor of $2$ is restored.

In the general case where $(k,2N)=g$, the proof follows identically. Indeed, the Dirichlet series $\tilde\CCL^{(\nu)}_{k}$ in Eq.~\eqref{eq:L-new} correspond to the theta series $\theta^{(\nu)}_{N/g,k/g}$ of reduced index $N/g$, and by construction $(k/g, 2N/g)=1$.
In particular, the $\phi(2N/g)$ values of $k\in \ZZ/2N$ that satisfy $(k,2N)=g$ lead to as many Dirichlet series $\tilde\CCL^{(\nu)}_k$ with $k/g \in G_{2N/g}$. The latter are shown to be in the $\CC$-linear span of the Dirichlet $L$-functions associated with characters $\chi_j \in \widehat{G}_{2N/g}$ by the same arguments above. Explicitly,
\be\label{eq:decomp_ei2}
    \tilde\CCL^{(\nu)}_{k}(s)\=\sum_{j }\,2\frac{\overline{\chi_{j,\ast}}(k/g)}{\phi(2N/g)}\,L_j(2s-\nu) \, ,
\ee
where the sum is over $\Jeven_{2N/g}$ for $\nu=0$ and $\Jodd_{2N/g}$ for $\nu=1$.
\end{proof}

{In the spirit of Definition~\ref{def:vv-MRS}, the linear decomposition in Eq.~\eqref{eq:decomp_ei2} translates, at the level of the canonical Dirichlet series $D^{(\nu)}_k$ in Eq.~\eqref{eq:D-canonical}, into the expressions
\be\label{eq:decomp_ei2-bis}
    D^{(0)}_{k}(u)\= \sum_{j\in J^{\rm even}_{{\rm all},\,2N}}\, {\bf O}_{kj} \, c_j^{-u} \,L_j(u) \, , \quad
    D^{(1)}_{k}(u)\= \sum_{j\in J^{\rm odd}_{{\rm all},\,2N}}\, {\bf O}_{kj} \, c_j^{-u} \,L_j(u) \, ,
\ee
where $c_j=2N/m_j$ for a Dirichlet $L$-function $L_j$ of modulus~$m_j$,
\be
{\bf O}_{kj} = \begin{cases} 2\frac{\overline{\chi_{j,\ast}}(k/c_j)}{\phi(m_j)} & \mbox{if} \;\; k/c_j \in G_{m_j}\\
0 & \mbox{otherwise} \;\;
\end{cases} \, ,
\ee
and the index sets $J^{\rm even}_{{\rm all},\,2N}$ and $J^{\rm odd}_{{\rm all},\,2N}$ collect the Dirichlet characters of all moduli dividing $2N$ by parity, that is, 
\be\label{eq:Jall-defs}
J^{\rm even}_{{\rm all},\,2N} := \bigsqcup_{m\vert2N} \Jeven_m\,, \quad
J^{\rm odd}_{{\rm all},\,2N} := \bigsqcup_{m\vert2N} \Jodd_m\,.
\ee
Equivalently, via the identity in Eq.~\eqref{eq:CCL-D-identity}, the theta-Mellin Dirichlet series read as
\be
\CCL^{(0)}_{k}(s)= \sum_{j\in J^{\rm even}_{{\rm all},\,2N}}\, {\bf O}_{kj} \, c_j^{-2s} \,L_j(2s) \, , \quad
\CCL^{(1)}_{k}(s)= \sum_{j\in J^{\rm odd}_{{\rm all},\,2N}}\, {\bf O}_{kj} \, c_j^{1-2s} \,L_j(2s-1) \, .
\ee
As we will see in the proof of Proposition~\ref{prop:falsetheta-vvmrs}, when linearly combining $\CCL^{(\nu)}_k$ for $k \in \widetilde{G}^{(\nu)}_N$ into the Dirichlet series of the Stokes constants for the asymptotic expansions of the Eichler integrals, which we compute in Eq.~\eqref{eq:L-funct-Stheta}, the expression above leads precisely to the decomposition required by part~3 of Definition~\ref{def:vv-MRS}.

\subsection{Modular resurgence}\label{subsec:modularresugence}

In order to prove that the vectors of Eichler integrals $\bEI^{(\nu)}_N$ for $\nu=0,1$ are vector-valued modular resurgent $q$-series, we begin by computing their asymptotic expansions.

\subsubsection{Asymptotic expansion}

\begin{lemma}\label{lem:asympfalsetheta}
    Asymptotically expanding the vectors of Eichler integrals $\bEI^{(0)}_N(\tau)-\frac{\ri}{\pi\tau}\,\boldsymbol{1}$ and $\bEI^{(1)}_N(\tau)$, defined in Eq.~\eqref{eq:bEI-folded}, for $\tau\to 0$ with $\Im(\tau)>0$ produces the vectors of Gevrey-1 asymptotic series
    \begin{equation} \label{eq: tilde-Phi}
        \tbEI^{(\nu)}_N(\tau)\=\sum_{\ell=0}^\infty {\bf c}_{\ell}^{(\nu)} \, \tau^\ell \, ,
    \end{equation}
    where the coefficient vectors ${\bf c}_{\ell}^{(\nu)} \in \CC^{r_\nu}$, $\ell \in \ZZ_{\ge 0}$, are given explicitly by
    \begin{equation} \label{eq: cj-coeffs}
        {\bf c}_{\ell}^{(\nu)}\=\frac{(-1)^\nu}{\pi} \left(\frac{2N}{\pi \ri}\right)^{3/2-\nu+\ell}\Gamma\left( \tfrac{3}{2}-\nu+\ell \right) \, \Om^{(\nu)}(S) \, \pmb{\mathscr{L}}^{(\nu)}_N(1+\ell)\,,
    \end{equation}
    with $\Om^{(\nu)}(S)$ the $S$-matrix from Lemma~\ref{lem:mod_theta} and $\pmb{\mathscr{L}}^{(\nu)}_N$ the vector of Dirichlet series in Eq.~\eqref{eq:Dirichlet-vec}.
\end{lemma}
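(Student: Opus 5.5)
We will use the integral representations \eqref{eq:tilde_int_0-setting}--\eqref{eq:tilde_int_1} together with the modular transformation of the theta vector in Lemma~\ref{lem:mod_theta}. This trades the small-$\tau$ behaviour of $\bEI^{(\nu)}_N(\tau)$ for the large-imaginary-part behaviour of $\boldsymbol\theta^{(\nu)}_N$, where the $q$-expansion converges rapidly. The plan is to treat $\nu=1$ first, where the integrand is $\boldsymbol\theta^{(1)}_N(t)$ itself, and then $\nu=0$, where one first integrates by parts or works directly with $\frac{d}{dt}\boldsymbol\theta^{(0)}_N$.

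\emph{Step 1 (S-transformation).} We rewrite $\boldsymbol\theta^{(\nu)}_N(t)=(-1/t)^{\omega}\,\ri^{\omega}\cdots\Om^{(\nu)}(S)\,\boldsymbol\theta^{(\nu)}_N(-1/t)$ by applying Eq.~\eqref{eq:mod_trans-2} with $\gamma=S$ (inverted). We then substitute the $q$-expansion of $\boldsymbol\theta^{(\nu)}_N(-1/t)$, i.e.\ a sum over $n$ of $n^\nu\,\delta_{k,n\,(2N)}\,\re^{-2\pi\ri n^2/(4Nt)}$. Each term gives an integral of the shape $\int_\tau^{\ri\infty} t^{-\omega}\re^{-\pi\ri n^2/(2Nt)}(t-\tau)^{-1/2}dt$. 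Substituting $t=\tau/(1-x)$, or better $u=1/t$ so that the integral runs from $0$ to $1/\tau$, turns each term into an integral over a segment near $u=0$ of $u^{\omega-3/2}(1-\tau u)^{-1/2}\re^{-\pi\ri n^2 u/(2N)}$, up to explicit powers of $\tau$.

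\emph{Step 2 (expansion).} We expand $(1-\tau u)^{-1/2}=\sum_\ell \binom{-1/2}{\ell}(-\tau u)^\ell$ and integrate term by term. This extends the $u$-integral to $\infty$ along the direction where $\re^{-\pi\ri n^2u/(2N)}$ decays, and the error is exponentially small, controlled by $\re^{-c/|\tau|}$. This produces $\Gamma(\omega-1/2+\ell)\,(2N/(\pi\ri n^2))^{\omega-1/2+\ell}$. Summing over $n$ with weights $n^\nu$ yields $D^{(\nu)}_k(2(\omega-1/2+\ell)-\nu)$, which by Eq.~\eqref{eq:CCL-D-identity} equals $\pmb{\mathscr{L}}^{(\nu)}_N(1+\ell)$, since $\omega-1/2=\nu$. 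The binomial coefficient combines with the gamma function via $\binom{-1/2}{\ell}(-1)^\ell\Gamma(\nu+\ell)\cdots$; we check that, after the normalisations $1/\sqrt{2N\ri}$ or $\sqrt{2N\ri}/\pi$ and the $\ri^\omega$ from the multiplier, this reorganises into $\frac{(-1)^\nu}{\pi}(2N/\pi\ri)^{3/2-\nu+\ell}\Gamma(3/2-\nu+\ell)$. The Gevrey-1 property then follows, since $\Gamma(3/2-\nu+\ell)\sim\ell!$ and $\pmb{\mathscr{L}}^{(\nu)}_N(1+\ell)$ is bounded.

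\emph{Step 3 ($\nu=0$ and the $n=0$ term).} For $\nu=0$, the term $n=0$ of $\theta^{(0)}_{N,0}(-1/t)$ is constant. After the $S$-transformation it contributes a non-decaying piece $\propto t^{-1/2}$ times a column of $\Om^{(0)}(S)$ at $k'=0$, which has entries $|\mathcal O_0|/\sqrt{2\ri N}$ for all $k$. Because $\frac{d}{dt}$ acts, this piece gives $\frac{d}{dt}t^{-1/2}$, and integrating it against $(t-\tau)^{-1/2}$ produces an exact term in $1/\tau$. We expect this to be precisely $\frac{\ri}{\pi\tau}\boldsymbol 1$, which explains the subtraction in the statement. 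The main obstacle is careful bookkeeping of branches and constants, namely $\sqrt{2N\ri}$, $\ri^\omega$, and the branch of $(t-\tau)^{-1/2}$ fixed in the footnote, together with justifying the rotation of the $u$-contour. For the latter we would first check the case $\tau\in\ri\IR_{>0}$, where everything is real-exponential, and then extend by analytic continuation within sectors of $\IH$.
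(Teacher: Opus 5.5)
Your proposal is correct, and it reaches the formula by a genuinely different route from the paper. The paper does not use the integral representations here. It Mellin-transforms $\bEI^{(\nu)}_N(\ri t)$ into $(2N/\pi)^s\Gamma(s)\,\pmb{\mathscr{L}}^{(\nu)}_N(\nu-\tfrac12+s)$ and shifts the inverse Mellin contour to the left, so that the residues at $s=-\ell$ give the coefficients as values $\pmb{\mathscr{L}}^{(\nu)}_N(\nu-\tfrac12-\ell)$ at negative arguments (via \cite[Prop.~3.4]{FR24-2}). Only then does it invoke modularity, through the functional equation of Lemma~\ref{lem:functionalRel-Lfn-theta}, to rewrite these values as $\Om^{(\nu)}(S)\,\pmb{\mathscr{L}}^{(\nu)}_N(1+\ell)$; the term $\frac{\ri}{\pi\tau}\boldsymbol 1$ is the extra residue at $s=1$ coming from the pole of $\CCL^{(0)}_k$ at $s=\tfrac12$. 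You instead apply the $S$-transformation of $\boldsymbol\theta^{(\nu)}_N$ inside the period integral and use Watson's lemma. The values at positive integers then appear directly, no functional equation is needed, and the pole term is traced to the constant term of $\theta^{(0)}_{N,0}$. Carried through, your normalisations reproduce Eq.~\eqref{eq: cj-coeffs} for both values of $\nu$ and give exactly $\frac{\ri}{\pi\tau}\boldsymbol 1$. Your computation is essentially the one the paper performs later in the proof of Proposition~\ref{prop:median}, stopped at the level of formal series. So it also identifies the exponentially small remainder as a multiple of $\tau^{\nu-3/2}\,\Om^{(\nu)}(S)\,\bEI^{(\nu)}_N(-1/\tau)$, anticipating Lemma~\ref{lem:disc}. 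The price is branch bookkeeping and a termwise analysis, which needs three adjustments. First, apply Watson's lemma on the rescaled finite segment ($u=v/\tau$, $v\in[0,1]$). This gives remainders $O(|\tau|^{L}m^{-2L-2})$ uniformly for $\arg\tau\in[\delta,\pi-\delta]$, summable in $m$ for $L\ge 1$. Do not extend exactly along $-\ri\IR_{\ge0}$: for $\arg\tau=\pi/2$ the branch point $u=1/\tau$ lies on that ray, which is precisely the Stokes ray. Second, for $\nu=1$ and $\ell=0$, the sum over $m$ and its interchange with the integral are only conditionally convergent. They need the mean-zero property of $\omega^{(1)}_k(\,\cdot\,;N)$ (summation by parts), a point the Mellin route avoids. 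Third, the general-$\nu$ formulas of your Step~2, namely $\Gamma(\omega-\tfrac12+\ell)$ and $D^{(\nu)}_k(2\ell+\nu)$, hold only for $\nu=1$. For $\nu=0$, the derivative in Eq.~\eqref{eq:tilde_int_0-setting} produces, by the product rule, two Watson integrals whose coefficients combine as $\big(\ell+\tfrac12\big)\big(\tfrac12\big)_\ell=\big(\tfrac12\big)_{\ell+1}$. This yields $\Gamma(\tfrac32+\ell)$ and $D^{(0)}_k(2\ell+2)=\CCL^{(0)}_k(1+\ell)$, as required.
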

\begin{proof}
    By construction, the Eichler integral ${\sf EI}[\theta^{(\nu)}_{N,k}]$ in Eq.~\eqref{eq:tilde-th} can be interpreted as the generating $q$-series of the coefficients of the Dirichlet series $\CCL^{(\nu)}_{k}$ in Eq.~\eqref{eq:L}.
    More precisely, the Mellin transform of ${\sf EI}[\theta^{(\nu)}_{N,k}]$ for $k \in \widetilde{G}^{(\nu)}_{N}$ gives
     \begin{equation}\label{eq:mellin_eichler}
    \begin{aligned}
        \int_0^\infty {\sf EI}[\theta_{N,k}^{(\nu)}](\ri t)\, t^{s-1}\;dt&= \int_0^\infty \sum_{n\in\ZZ}\text{sgn}(n)\,n^{1-\nu}\,\delta_{k,n\,(2N)}\, \re^{-\frac{\pi n^2}{2N} t}\, t^{s-1}\;dt\\
        &=\sum_{n\in\ZZ}\text{sgn}(n)\,n^{1-\nu}\,\delta_{k,n\,(2N)}  \left( \frac{2N}{\pi n^2} \right)^s \,\Gamma(s)\\
        &= \left( \frac{2N}{\pi} \right)^s \,\Gamma(s) \, \sum_{n=1}^\infty n^{1-\nu-2s}\,\omega^{(\nu)}_k(n;N) \\
        &=\left( \frac{2N}{\pi} \right)^s \,\Gamma(s)\,\CCL^{(\nu)}_{k}\left(\nu-\tfrac{1}{2}+s\right)
        \,.
    \end{aligned}
    \end{equation}
    For $\nu=0$, the Mellin inversion of Eq.~\eqref{eq:mellin_eichler} picks up, besides the poles of $\Gamma(s)$, a simple pole of its integrand at $s=1$: the Dirichlet series $\CCL^{(0)}_k(s)$ has a simple pole at $s=\tfrac12$ with residue $\tfrac{1}{2N}$, hence the residue of $\big(\tfrac{2N}{\pi}\big)^{s}\Gamma(s)\,\CCL^{(0)}_k\big(s-\tfrac12\big)\,t^{-s}$ at $s=1$ equals $\tfrac{1}{\pi t}=\tfrac{\ri}{\pi\tau}$. For $\nu=1$, no such pole occurs.
    
    Expressing the coefficients of the asymptotic expansion of the generating $q$-series ${\sf EI}[\theta^{(\nu)}_{N,k}]$ in terms of the values of the corresponding Dirichlet series (see~\cite[Prop.~3.4]{FR24-2}), we obtain that
    \begin{equation} \label{eq: Phi-step1}
            \tbEI^{(\nu)}_N(\tau) \=\sum_{\ell=0}^\infty\frac{1}{\ell!} \pmb{\mathscr{L}}^{(\nu)}_N \left(\nu-\tfrac{1}{2}-\ell\right) \left(\frac{2\pi\ri \tau }{4 N} \right)^\ell \,.
    \end{equation}
    From the functional equation in Eq.~\eqref{eq:func-L}, we find that
    \begin{equation} \label{eq: Phi-step2}
            \pmb{\mathscr{L}}^{(\nu)}_N\left(\nu-\tfrac{1}{2}-\ell\right) \=\ri^{\omega} \left(\frac{2N}{\pi} \right)^{3/2-\nu+2\ell}\frac{\Gamma(\ell+1)}{\Gamma\left(\nu-\tfrac{1}{2}-\ell\right) } \, \Om^{(\nu)}(S)\, \pmb{\mathscr{L}}^{(\nu)}_N(1+\ell) \, ,
    \end{equation}
    where $\omega=1/2+\nu$. Substituting Eq.~\eqref{eq: Phi-step2} into Eq.~\eqref{eq: Phi-step1} and applying the reflection identity for the gamma function gives the desired expression for $\tbEI^{(\nu)}_N$.
\end{proof}

The coefficients of the asymptotic series $\tEI[\theta_{N,k}^{(\nu)}] \in \CC [\![\tau]\!]$ can be written in terms of trigonometric functions using the expression for the $S$-matrix $\Om^{(\nu)}(S)$ in Eq.~\eqref{eq: Om}. More precisely, 
\begin{subequations}
\begin{align}
    c_{k,\ell}^{(0)}&\=\frac{2}{\pi\sqrt{2\ri N}}\left(\frac{2N}{\pi \ri}\right)^{3/2+\ell} \Gamma\big(\tfrac{3}{2}+\ell\big)\sum_{m=1}^\infty\frac{\cos(\frac{\pi k m}{N})}{m^{2(\ell+1)}}\, , \label{eq:c0-LZ} \\ 
    c_{k,\ell}^{(1)}&\=\frac{2\ri}{\pi\sqrt{2\ri N}}\left(\frac{2N}{\pi \ri}\right)^{1/2+\ell} \Gamma\big(\tfrac{1}{2}+\ell\big)\sum_{m=1}^\infty\frac{\sin(\frac{\pi k m}{N})}{m^{2\ell+1}}\, . \label{eq:c1-LZ}
    \end{align}
\end{subequations}

\paragraph{Alternative derivation via Lawrence\texorpdfstring{--}{-}Zagier: asymptotics.}

For $\nu=1$, the asymptotic series in Eq.~\eqref{eq: tilde-Phi} can equivalently be derived via the $L$-series approach of Lawrence and Zagier~\cite{LawrenceZagier99}, which we now review. 

\begin{lemma}[Prop.~on~page~98~in~\cite{LawrenceZagier99}]\label{lem:LZ}
Let $C:\mathbb{Z}\to \mathbb{C}$ be a periodic function with mean value zero. The associated $L$-series
\begin{equation}
    L(s,C) := \sum_{n=1}^\infty \frac{C(n)}{n^s} \, , \quad \Re(s)>1 \, ,
\end{equation}
extends holomorphically to $\mathbb{C}$, and the function $\sum_{n=1}^\infty C(n) \re^{-n^2t}$, defined for $t >0$, has the asymptotic expansion
\begin{equation}
    \sum_{n=1}^\infty C(n) \re^{-n^2 t}  \sim \sum_{r=0}^\infty L(-2r,C) \frac{(-t)^r}{r!}  \, ,
\end{equation}
as $t\to 0^+$. The number $L(-r,C)$ is given explicitly by
\begin{equation}\label{Lmr-def}
    L(-r,C) = - \frac{M^r}{r+1} \sum_{n=1}^M C(n) B_{r+1}\left( \tfrac{n}{M} \right) \, , \quad r\in\mathbb{Z}_{\geq 0} \, ,
\end{equation}
where $B_k(x)$ is the $k$-th Bernoulli polynomial and $M$ is any period of $C$.
\end{lemma}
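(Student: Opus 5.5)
The plan is to reduce all three assertions to the Hurwitz zeta function, using the periodicity of $C$. Fix a period $M$ of $C$ and split the sum over $n$ into residue classes modulo $M$:
\begin{equation*}
L(s,C)=\sum_{a=1}^{M} C(a)\sum_{q\ge 0}(a+qM)^{-s}=M^{-s}\sum_{a=1}^{M}C(a)\,\zeta\!\left(s,\tfrac{a}{M}\right),
\end{equation*}
valid for $\Re(s)>1$. The Hurwitz zeta function $\zeta(s,x)$ continues meromorphically to $\CC$, with a single simple pole at $s=1$ of residue $1$ independent of $x$. The residue of $L(s,C)$ at $s=1$ is therefore $M^{-1}\sum_{a=1}^M C(a)$, which vanishes by the mean-zero hypothesis, so $L(s,C)$ is entire. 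The explicit values then follow from the classical identity $\zeta(-r,x)=-B_{r+1}(x)/(r+1)$ for $r\in\ZZ_{\ge0}$: substituting it gives $L(-r,C)=-\tfrac{M^r}{r+1}\sum_{n=1}^M C(n)B_{r+1}(n/M)$. This value is independent of the chosen period, since any two periods have a common multiple and $L(s,C)$ itself does not depend on $M$.

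For the asymptotic expansion, I would use the Mellin representation
\begin{equation*}
F(t):=\sum_{n\ge1}C(n)\re^{-n^2t}=\frac{1}{2\pi\ri}\int_{c-\ri\infty}^{c+\ri\infty}\Gamma(s)\,L(2s,C)\,t^{-s}\,ds,\qquad c>\tfrac12 .
\end{equation*}
The plan is to shift the contour to $\Re(s)=-R-\tfrac12$. Since $L(2s,C)$ is entire, the only poles crossed are those of $\Gamma(s)$ at $s=-r$, with residues $\tfrac{(-1)^r}{r!}L(-2r,C)\,t^{r}$. This produces exactly the claimed terms for $r=0,\dots,R$. The remaining integral is $O(t^{R+1/2})$, which is smaller than the last term kept, so the full series is the asymptotic expansion as $t\to0^+$.

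The main obstacle is justifying the contour shift, which requires polynomial growth of $L(\sigma+\ri y,C)$ in vertical strips so that the exponential decay of $\Gamma$ dominates. I would obtain this from the standard convexity/Phragmén--Lindelöf bounds for $\zeta(s,x)$, uniform in $x\in(0,1]$, which come from its functional equation (Hurwitz's formula) or from Euler--Maclaurin continuation. Alternatively, I would bypass Mellin inversion entirely and apply Euler--Maclaurin summation directly to each residue class. Writing $n=a+qM$, the function $g(u)=\re^{-(a+uM)^2t}$ yields
\begin{equation*}
\sum_{q\ge0}g(q)=\int_0^\infty g\,du-\sum_{j\ge1}\frac{B_j(a/M)\ldots}{j!}
\end{equation*}
in Zagier's form $\sum_{q}f((q+x)\epsilon)\sim \epsilon^{-1}\int_0^\infty f-\sum_{j}\tfrac{B_{j+1}(x)}{(j+1)!}f^{(j)}(0)\epsilon^j$, with $f(u)=\re^{-u^2}$ and $\epsilon=M\sqrt t$. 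In this route, the integral term $\epsilon^{-1}\int_0^\infty f$ cancels after summing against $C(a)$ by the mean-zero condition. Only even derivatives of $f$ survive, namely $f^{(2r)}(0)=(-1)^r(2r)!/r!$. Combining this with the explicit formula for $L(-2r,C)$ recovers the stated series, with the error terms controlled by the standard Euler--Maclaurin remainder bound.
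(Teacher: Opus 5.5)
Your proof is correct. The Hurwitz decomposition $L(s,C)=M^{-s}\sum_{a=1}^M C(a)\,\zeta(s,a/M)$ gives three things at once: the continuation, the cancellation of the only possible pole (its residue is $M^{-1}\sum_a C(a)=0$), and the Bernoulli values. Either of your two routes then yields the expansion with the right coefficients. The identities $f^{(2r)}(0)=(-1)^r(2r)!/r!$ and $\epsilon^j M^{-j}=t^{j/2}$ are exactly what is needed to match $L(-2r,C)\,(-t)^r/r!$.

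The paper itself gives no proof of this lemma; it is quoted from \cite{LawrenceZagier99}, so there is no argument to compare line by line. The nearest in-paper argument is the Mellin-inversion step in the proof of Lemma~\ref{lem:asympfalsetheta} (via \cite[Prop.~3.4]{FR24-2}), which is your contour-shift route. Your setup also shows why the paper has to treat $\nu=0$ separately. There the underlying periodic function $\omega^{(0)}_k(\cdot\,;N)$ has mean $1/N$, so the corresponding Dirichlet series keeps its pole. Mellin inversion then picks up the extra residue that becomes the subtracted term $\tfrac{\ri}{\pi\tau}$.

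A few small remarks:
\begin{itemize}
\item Of your two routes, Euler--Maclaurin is the more economical. It needs no growth information on $L(s,C)$ in vertical strips, and mean zero enters only to kill the $\epsilon^{-1}\int_0^\infty f$ term. The Mellin route does need those growth bounds.
\item For the Mellin route, the convexity bounds for $\zeta(s,x)$ are not uniform in $x\in(0,1]$ when $\Re(s)>0$, because of the term $x^{-s}$. This is harmless here, since only the finitely many points $x=a/M$ occur.
\item Your first Euler--Maclaurin display, the one with the ellipsis, should be replaced by the Zagier form you state right after it, which is the correct statement.
\end{itemize}
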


\begin{prop} \label{prop: EI1-asympt-new}
    For $k \in \IZ/2N$, the Eichler integral ${\sf EI}[\theta^{(1)}_{N,k}](\tau)$, defined in Eq.~\eqref{eq:tilde-th}, has the asymptotic expansion
    \begin{equation}\label{A:perturbativeexpansionB}
       {\sf EI}[\theta^{(1)}_{N,k}](\tau)\sim \tEI[\theta_{N,k}^{(1)}](\tau)=\frac{2}{\pi\sqrt{\pi }}\sum_{\ell=0}^\infty
       \left(\frac{2N}{\pi \ri}\right)^{\ell}\Gamma\big(\tfrac{1}{2}+\ell\big)\sum_{m=1}^\infty\frac{\sin(\frac{\pi k m}{N})}{m^{2\ell+1}} \tau^\ell \, ,
    \end{equation}
    as $\tau\to0^+$.
\end{prop}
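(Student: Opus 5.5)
The plan is to apply Lemma~\ref{lem:LZ} directly to the false theta function and then rewrite the resulting Bernoulli-polynomial values as sine sums. For $\nu=1$, combine the terms $n$ and $-n$ in Eq.~\eqref{eq:tilde-th}, using $\mathrm{sgn}(n)\,n^{0}=\pm1$. This gives
\[
{\sf EI}[\theta^{(1)}_{N,k}](\tau)=\sum_{n=1}^\infty \omega^{(1)}_k(n;N)\,\re^{-n^2 t}\,,\qquad t:=-\frac{2\pi\ri\tau}{4N}=-\frac{\pi\ri\tau}{2N}\,,
\]
where $\omega^{(1)}_k(n;N)$ is the function of Eq.~\eqref{eq:omega-def}. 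Set $C(n):=\omega^{(1)}_k(n;N)$. It is $2N$-periodic and odd in $n$, so its mean value over a period is zero, and Lemma~\ref{lem:LZ} applies with $M=2N$. Taking $\tau\to0^+$ along the positive imaginary direction corresponds to $t\to0^+$. The extension to a sector, needed to treat $\tau$ as a complex variable, is standard: the Mellin--Barnes argument behind Lemma~\ref{lem:LZ} works for $\Re(t)>0$. The lemma then gives
\[
{\sf EI}[\theta^{(1)}_{N,k}](\tau)\sim\sum_{r=0}^\infty L(-2r,C)\,\frac{1}{r!}\Big(\frac{\pi\ri\tau}{2N}\Big)^{r}\,.
\]

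Next, evaluate $L(-2r,C)$ with Eq.~\eqref{Lmr-def}. The cases $k\in\{0,N\}$ are trivial, since both sides vanish. For $1\le k\le N-1$ (and, by parity, for the remaining $k$), the only nonzero values in one period are $C(k)=1$ and $C(2N-k)=-1$. Using $B_{2r+1}(1-x)=-B_{2r+1}(x)$, the period sum collapses to $2B_{2r+1}(k/2N)$, so
\[
L(-2r,C)=-\frac{2(2N)^{2r}}{2r+1}\,B_{2r+1}\big(\tfrac{k}{2N}\big)\,.
\]
Then insert the Fourier expansion of the odd Bernoulli polynomials, valid on $0<x<1$:
\[
B_{2r+1}(x)=(-1)^{r+1}\frac{2(2r+1)!}{(2\pi)^{2r+1}}\sum_{m\ge1}\frac{\sin(2\pi m x)}{m^{2r+1}}\,.
\]
At $x=k/2N$ the sine becomes $\sin(\pi k m/N)$, which is exactly the sum appearing in Eq.~\eqref{A:perturbativeexpansionB}. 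For $r=0$ the series converges only conditionally, but the identity still holds pointwise for $0<x<1$.

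The last step is bookkeeping of constants, which is the only place where errors are likely. The coefficient of $\tau^r$ becomes
\[
4(-1)^r\,\frac{(2r)!}{r!}\,\frac{(\pi\ri)^r(2N)^r}{(2\pi)^{2r+1}}\sum_{m\ge1}\frac{\sin(\pi k m/N)}{m^{2r+1}}\,.
\]
Use the duplication identity $(2r)!/r!=4^r\,\Gamma(\tfrac12+r)/\sqrt\pi$ together with $(-1)^r(\pi\ri)^r/\pi^{2r}=(\pi\ri)^{-r}$. This turns the prefactor into $\frac{2}{\pi\sqrt\pi}\big(\frac{2N}{\pi\ri}\big)^r\Gamma(\tfrac12+r)$, which is the claimed formula. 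As a consistency check, I would compare the result with Eq.~\eqref{eq:c1-LZ} from Lemma~\ref{lem:asympfalsetheta}; the two should agree up to the normalisation of $\sqrt{2\ri N}$ and the $(2N/\pi\ri)^{1/2}$ factor. Checking that agreement, and getting the branch of $\sqrt{\ri}$ right, is the step I expect to need the most care.
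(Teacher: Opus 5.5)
Your proposal is correct and follows the paper's proof essentially step for step. Like the paper, you rewrite the false theta function as $\sum_{n\ge1}\omega^{(1)}_k(n;N)\,\re^{-n^2t}$, apply Lemma~\ref{lem:LZ} with period $2N$, collapse the Bernoulli period sum via $B_{2\ell+1}(1-x)=-B_{2\ell+1}(x)$, and substitute the Fourier series of $B_{2\ell+1}$. Your explicit constant bookkeeping through the duplication formula, and your separate handling of the trivial cases $k\in\{0,N\}$, fill in details the paper leaves implicit.
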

\begin{proof}
    Recall that the Bernoulli polynomials satisfy the identities $B_\ell(1-x) = (-1)^\ell B_\ell(x)$ and
    \begin{equation}\label{Bernoulli-B2np1}
        B_{2\ell+1}(x) = (-1)^{\ell+1} \frac{2(2\ell+1)!}{(2\pi)^{2\ell+1}} \sum_{m=1}^\infty \frac{\sin(2\pi m x)}{m^{2\ell+1}}  \, , \quad \ell \in \IZ_{\ge 0} \,.
    \end{equation}
    Rewriting Eq.~\eqref{eq:tilde-th} for $\nu = 1$ by splitting the sum over $n \in \ZZ$ into positive and negative parts gives
    \be
        {\sf EI}[\theta^{(1)}_{N,k}](\tau) \= \sum_{n=1}^\infty \omega^{(1)}_k(n;N)\,\re^{\pi \ri n^2 \tau/(2N)} \, , 
    \ee
    with $\omega^{(1)}_k(n;N)$ defined in Eq.~\eqref{eq:omega-def}. The arithmetic function $\omega^{(1)}_k(\,\cdot\,;N) : \ZZ \to \CC$ is $2N$-periodic with mean value zero, so we can apply Lemma~\ref{lem:LZ} to ${\sf EI}[\theta^{(1)}_{N,k}]$. Its asymptotic expansion is
\begin{equation}\label{A:perturbativeexpansion}
    \tEI[\theta_{N,k}^{(1)}](\tau)=\sum_{\ell=0}^\infty a_{k,\ell} \, \tau^{\ell} \, , \quad a_{k,\ell}= \frac{L\big(-2\ell,\, \omega^{(1)}_k(\,\cdot\,;N)\big)}{\ell!} \left( \frac{\pi \ri}{2N} \right)^\ell \,,
\end{equation}
where the number $L(-2\ell,\,\omega^{(1)}_k(\,\cdot\,;N))$ is computed explicitly as
\begin{equation}
    L\big(-2\ell,\,\omega^{(1)}_k(\,\cdot\,;N)\big) = -\frac{(2N)^{2\ell}}{2\ell+1}
    \left( B_{2\ell+1} \left( \tfrac{k}{2N} \right) -
    B_{2\ell+1} \left(1- \tfrac{k}{2N} \right) \right) \, , \quad \ell \in \mathbb{Z}_{\ge 0} \, .
\end{equation}
Substituting the identity in Eq.~\eqref{Bernoulli-B2np1} yields
\begin{equation}
    L\big(-2\ell,\,\omega^{(1)}_k(\,\cdot\,;N)\big) = (-1)^\ell \frac{2 (2\ell)!}{\pi} \left(\frac{N}{\pi}\right)^{2\ell} \sum_{m=1}^\infty \frac{\sin\left( \tfrac{\pi k m}{N}\right)}{m^{2\ell+1}} \, ,
\end{equation}
and Eq.~\eqref{A:perturbativeexpansionB} then follows.
\end{proof}

As shown in the computation leading to Eq.~\eqref{eq:c1-LZ}, the asymptotic series $\tEI[\theta_{N,k}^{(1)}]$ admits two equivalent expressions---the Mellin\,/\,$\CCL^{(\nu)}$ form of Eq.~\eqref{eq: tilde-Phi} and the Lawrence--Zagier form of Eq.~\eqref{A:perturbativeexpansionB}---which coincide as formal power series.
Note that we cannot apply the strategy of Lemma~\ref{lem:LZ} to the other family of Eichler integrals ${\sf EI}[\theta_{N,k}^{(0)}]$ with $k \in \ZZ/2N$, since the coefficients in their $q$-series expansion in Eq.~\eqref{eq:tilde-th} are not mean-zero. Explicitly, their $q$-series expansion in Eq.~\eqref{eq:tilde-th} for $\nu=0$ can be rewritten as
\be
    {\sf EI}[\theta^{(0)}_{N,k}](\tau) \= \sum_{n=1}^\infty n \, \omega^{(0)}_k(n;N)\,\re^{\pi \ri n^2 \tau/(2N)}\,,
\ee
with $\omega^{(0)}_k(n;N)$ defined in Eq.~\eqref{eq:omega-def}. The arithmetic function $\omega^{(0)}_k(\,\cdot\,;N) : \ZZ \to \CC$ is $2N$-periodic with mean value $\tfrac{1}{N}$, and hence Lemma~\ref{lem:LZ} does not apply. However, following the steps leading to the formula in Eq.~\eqref{eq:c0-LZ} shows that the Eichler integral ${\sf EI}[\theta^{(0)}_{N,k}](\tau)$ has the asymptotic expansion
\begin{equation}\label{A:perturbativeexpansionB-0}
       {\sf EI}[\theta^{(0)}_{N,k}](\tau)-\frac{\ri}{\pi\tau} \sim \tEI[\theta_{N,k}^{(0)}](\tau)=-\frac{4N}{\pi^2\sqrt{\pi}}\sum_{\ell=0}^\infty\left(\frac{2N}{\pi \ri}\right)^{\ell} \Gamma\big(\tfrac{3}{2}+\ell\big)\sum_{m=1}^\infty\frac{\cos(\frac{\pi k m}{N})}{m^{2(\ell+1)}}\, \tau^\ell \, ,
    \end{equation}
as $\tau\to0^+$. The pole term is unavoidable here: the Eichler integral ${\sf EI}[\theta^{(0)}_{N,k}](\ri t)$ grows like $\tfrac{1}{\pi t}$ as $t\to0^+$, whereas the right-hand side above is bounded. This is the same subtraction performed in Lemma~\ref{lem:asympfalsetheta} and later in Proposition~\ref{prop:median}, and it has no counterpart for $\nu=1$.

\medskip

In the rest of this section, we will resume our (modular) resurgent analysis of the asymptotic expansions of ${\sf EI}[\theta_{N,k}^{(\nu)}]$ for $\nu =0,1$ using the expressions computed in Lemma~\ref{lem:asympfalsetheta}.

\subsubsection{Borel transform and Stokes constants}\label{sec:MRS-EI}

The resurgent structures of the formal power series $\tEI[\theta_{N,k}^{(\nu)}]$ in Eq.~\eqref{eq: tilde-Phi} can be computed exactly and shown to satisfy the properties in parts~1 and~2 of Definition~\ref{def:vv-MRS}. Together with Lemma~\ref{lem: decomp-ei}, we prove that they are components of two vector-valued MRSs.

\begin{prop}\label{prop:falsetheta-vvmrs}
    For $\nu=0,1$, the vector of asymptotic series $\tbEI^{(\nu)}_N$, defined in Eq.~\eqref{eq: tilde-Phi}, is a vector-valued modular resurgent series.
\end{prop}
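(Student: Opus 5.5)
We prove the three parts of Definition~\ref{def:vv-MRS} directly from the explicit coefficients of Lemma~\ref{lem:asympfalsetheta}. The plan is to expand the Dirichlet series $\pmb{\mathscr{L}}^{(\nu)}_N(1+\ell)$ as a sum over $m\geq1$ and compute the Borel transform term by term, summing the geometric-type series in closed form. Concretely, by Eqs.~\eqref{eq:c0-LZ}--\eqref{eq:c1-LZ}, the $k$-th component of $\tbEI^{(\nu)}_N$ has coefficients
\[
c^{(\nu)}_{k,\ell}\propto\Big(\tfrac{2N}{\pi\ri}\Big)^{\ell}\,\Gamma\big(\tfrac32-\nu+\ell\big)\sum_{m\ge1}\frac{\sigma_\nu(\pi k m/N)}{m^{2\ell+2-\nu}},
\]
with $\sigma_0=\cos$ and $\sigma_1=\sin$. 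Because of the half-integer shift in the gamma factor, we do not use the naive Borel transform. Instead, we use the modified Borel transform dividing by $\Gamma(\tfrac32-\nu+\ell)$, or equivalently work in the natural variable $\xi=\zeta^2$ announced after Eq.~\eqref{eq:CCL-D-identity}. For each fixed $m$, the sum over $\ell$ then becomes a geometric series $\sum_\ell \big(\tfrac{2N\zeta}{\pi\ri m^2}\big)^\ell$. This series sums to a simple pole at $\zeta=\pi\ri m^2/(2N)$, multiplied by $\sigma_\nu(\pi k m/N)/m^{2-\nu}$.

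The first step is to reorganize the poles into a single tower. The singularities sit at $\pi\ri m^2/(2N)$, which are multiples of $\CA=\pi\ri/(2N)$ indexed by squares only; we regard the tower $\rho_n=\CA n$ as one whose Stokes constants vanish unless $n$ is a square. Up to the normalization constant and the Jacobian of the change of variable, the Stokes constants are
\[
S_{k,n}\propto \delta_{n=m^2}\,\sigma_\nu(\pi km/N)\,m^{\nu-1}.
\]
For the negative half of the tower we use the $\Im(\tau)<0$ expansion, or the symmetrization of Eq.~\eqref{eq: def-tfrakg}. By the parity relations of Remark~\ref{rmk: parity-EI}, this gives $S_{k,-n}=\pm S_{k,n}$. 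Since the residue at each pole is a constant, the resurgent series there is constant, which verifies part~2. We also check that $\Om^{(\nu)}(S)$ packages exactly these trigonometric weights, and that the Laplace transform converges, using $|\sigma_\nu|\le1$ and the decay in $m$.

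For part~3 we form $\boldsymbol{\calL}_\pm(s)=\sum_n \boldsymbol S_n n^{-s}=\sum_m \boldsymbol S_{m^2}\,m^{-2s}$. The weights are $\cos(\pi km/N)$ or $\sin(\pi km/N)$, which are $2N$-periodic in $m$. Writing them via $\Om^{(\nu)}(S)$, they are linear combinations over $k'$ of $\omega^{(\nu)}_{k'}(m;N)$, so
\[
\boldsymbol{\calL}_\pm(s)\propto \Om^{(\nu)}(S)\,\boldsymbol D^{(\nu)}_N(2s+1-\nu)=\Om^{(\nu)}(S)\,\pmb{\mathscr{L}}^{(\nu)}_N\big(s+\tfrac12\big).
\]
Applying the decomposition of Eq.~\eqref{eq:decomp_ei2-bis} then writes each component as $\sum_j {\bf O}_{kj}\,c_j^{-u}L_j(u)$ in Dirichlet $L$-functions. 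This gives ${\bf M}_\pm=\Om^{(\nu)}(S)\,{\bf O}$, and the $c$-factors are absorbed into the $\mathrm{diag}(c_j^s)$ of Eq.~\eqref{eq: def-Lfunct}, with the argument shift $u=2s+1-\nu$ absorbed into the definition of $\boldsymbol L_\pm$. These functions remain $L$-functions, since $L(\chi,2s+c)$ keeps an Euler product and has meromorphic continuation.

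The main obstacle is the bookkeeping of the Borel variable. One must choose between the naive variable $\zeta$, where the half-integer gamma factors produce branch points of type $(\zeta-\rho)^{-3/2+\nu}$ rather than simple poles, and the natural variable, where the poles are simple but the tower is indexed by squares. The first thing to try is to fix the variable $\xi=\zeta^2$ as the Borel variable for this example, as the paper foreshadows. In that variable one verifies carefully that the residues are pure constants. The remaining, routine work is tracking the factors $m^{\nu-1}$ and the normalizations so that they match the shift $\pmb{\mathscr{L}}^{(\nu)}_N(s+\tfrac12)$.
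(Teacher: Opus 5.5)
Your part~3 argument is essentially the paper's: you rewrite the trigonometric weights as $\Om^{(\nu)}(S)\,\boldsymbol\omega^{(\nu)}(m;N)$ and then invoke Eq.~\eqref{eq:decomp_ei2-bis}. Part~1, however, fails as you have set it up. Definition~\ref{def:vv-MRS} asks for a tower at $\rho_m=\CA m$ for all $m\in\ZZ_{\neq0}$. You chose the Borel variable conjugate to $\tau$, i.e.\ the paper's $\xi$. In that variable the Borel transform of $\tau^{3/2-\nu}\tbEI^{(\nu)}_N$ has simple poles only at $\xi=\pi\ri m^2/(2N)$, $m\geq1$, all on the positive imaginary axis. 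This is a sparse, one-sided tower, with nothing at negative multiples of $\pi\ri/(2N)$.

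Your way of supplying the negative half does not work. The Eichler integrals are defined only on $\IH$, because the $q$-series diverges for $|q|>1$, so there is no $\Im(\tau)<0$ expansion to use. Symmetrizing as in Eq.~\eqref{eq: def-tfrakg} would replace $\tbEI^{(\nu)}_N$ of Eq.~\eqref{eq: tilde-Phi}, which is the object in the statement, by a different series. The parity relations~\eqref{eq:parity-EI} relate the components $k$ and $-k$; they say nothing about the Stokes constants at $\pm n$ of a fixed component. So $S_{k,-n}=\pm S_{k,n}$ is unjustified, and in your variable it is in fact false, since those Stokes constants vanish. This sparse one-sided tower is exactly the ``weak sense'' that Step~3 of Section~\ref{sec:beyond-unary} flags as weaker than what holds for unary theta series.

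The missing idea is a third option that your dichotomy of Borel variables overlooks: do the computation in $\xi$, but read the tower in $\zeta$ with $\xi=\zeta^2$. With $\CA=\sqrt{\pi\ri/(2N)}$, one has $\xi-\tfrac{\pi\ri m^2}{2N}=(\zeta-\CA m)(\zeta+\CA m)$. So each $\xi$-pole unfolds into two simple poles at $\rho_{\pm m}$, giving the equally spaced two-sided tower of Eq.~\eqref{eq:poles-1}. For $\nu=1$ this is the closed form $\tfrac{1}{\zeta\sqrt{\pi}}{\bm K}_{N,k}(2\CA\zeta)$ of Eq.~\eqref{eq:sihn_xi}. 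The relation between the residues at $\rho_m$ and $\rho_{-m}$ then comes from the $\zeta\mapsto-\zeta$ symmetry of the unfolded transform, not from $k\mapsto-k$.

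Label the Stokes vectors by $m$ rather than by $n=m^2$. The Stokes--Dirichlet vector is then $2\ri(-1)^\nu\Om^{(\nu)}(S)\,\boldsymbol{D}^{(\nu)}_N(s-1+\nu)$, as in Eq.~\eqref{eq:L-funct-Stheta}, and your part~3 argument applies verbatim.

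One smaller slip: with your modified transform in the $\xi$-plane, the residue at $\pi\ri m^2/(2N)$ is proportional to $m^{\nu}\sigma_\nu(\pi k m/N)$, not $m^{\nu-1}$. Your exponent mixes a $\zeta$-plane residue with $\xi$-plane pole locations. This only shifts the argument of $\boldsymbol{D}^{(\nu)}_N$ and does not affect part~3.
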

\begin{proof}
    Let us introduce the auxiliary vector $\boldsymbol\omega^{(\nu)}(m;N) := \big(\omega^{(\nu)}_k(m;N)\big)_{k\in\widetilde{G}^{(\nu)}_N}$, $m \in \IZ_{>0}$, so that Eq.~\eqref{eq:L} becomes
    \be
    \pmb{\mathscr{L}}^{(\nu)}_N(s) = \sum_{m=1}^\infty \frac{\boldsymbol\omega^{(\nu)}(m;N)}{m^{2s-\nu}} \, ,
    \ee
    using the folding of Eq.~\eqref{eq:Dirichlet-vec}. 
    The Borel transform of the Gevrey-1 asymptotic vector $\tau^{3/2-\nu}\tbEI^{(\nu)}_N(\tau)$ can be computed from Eq.~\eqref{eq: cj-coeffs} as
    \begin{equation} \label{eq: borel-ei-steps}
        \begin{aligned}
            \mathcal{B}[\tau^{3/2-\nu}\tbEI^{(\nu)}_N](\xi)
            &\= \frac{(-1)^\nu}{\pi} \left(\frac{2N}{\pi \ri}\right)^{3/2-\nu} \xi^{1/2-\nu}\,\Om^{(\nu)}(S)\sum_{\ell=0}^\infty \pmb{\mathscr{L}}^{(\nu)}_N(1+\ell)\left(\frac{2N \xi}{\pi \ri}\right)^{\ell}\\
            &\= \frac{(-1)^\nu}{\pi} \left(\frac{2N}{\pi \ri}\right)^{3/2-\nu} \xi^{1/2-\nu}\,\Om^{(\nu)}(S)\sum_{\ell=0}^\infty \sum_{m=1}^\infty \frac{\boldsymbol\omega^{(\nu)}(m;N)}{m^{2-\nu}}\left(\frac{2N \xi}{\pi \ri m^2}\right)^{\ell} \, ,
        \end{aligned}
    \end{equation}
    where $\xi$ is the Borel variable conjugate to $\tau$, corresponding to $\xi = \zeta^2$ in Definition~\ref{def:vv-MRS}.

    Anticipating the Stokes data emerging from the pole structure below, let us define the \emph{Stokes vectors}
    \begin{equation}\label{eq:StokesS-tot}
        {\bf St}^{(\nu)}_m := 2 \ri (-1)^\nu m^{1-\nu}\,\Om^{(\nu)}(S)\,\boldsymbol\omega^{(\nu)}(m;N)\,, \quad m \in \ZZ_{>0}\,.
    \end{equation}
    Exploiting the convergence of the series in the right-hand side of Eq.~\eqref{eq: borel-ei-steps}, we permute the order of summation, and the Borel transform becomes
    \begin{equation}\label{eq:Borel-nu}
        \mathcal{B}[\tau^{3/2-\nu}\tbEI^{(\nu)}_N](\xi) \= -\frac{1}{2 \pi\ri } \, \left(\frac{2N \xi}{\pi\ri}\right)^{1/2-\nu} \, \sum_{m=1}^\infty \frac{m^{2\nu-1} \,{\bf St}^{(\nu)}_m}{\xi-\tfrac{\pi\ri m^2}{2N}}\,,
    \end{equation}
    which makes the simple-pole structure with Stokes constants ${\bf St}^{(\nu)}_m$ immediately manifest. In the Borel variable $\zeta$ of Definition~\ref{def:vv-MRS} with $\xi=\zeta^2$, the Borel transform displays a tower of simple poles at\footnote{Under the folding $\xi=\zeta^2$, the two singularities at $\rho_{\pm m}$ collapse onto the single point $\eta_m=\CA^2 m^2=\tfrac{\pi\ri}{2N}m^2$, $m\in\ZZ_{>0}$, in the $\xi$-plane.}
    \begin{equation}\label{eq:poles-1}
        \rho_m\=\CA\,m\,,\quad \CA\=\sqrt{\frac{\pi\ri}{2N}}\,,\quad m\in\ZZ_{\neq 0} \, ,
    \end{equation}
    in agreement with part~1 of Definition~\ref{def:vv-MRS}.
    
    The Stokes vectors in Eq.~\eqref{eq:StokesS-tot} assemble into the vector of Dirichlet series
    \begin{equation}\label{eq:L-funct-Stheta}
        \boldsymbol\CL^{(\nu)}_N(s):=\sum_{m=1}^\infty \frac{{\bf St}^{(\nu)}_m}{m^s} \= 2 \ri (-1)^\nu\Om^{(\nu)}(S)\,\boldsymbol{D}^{(\nu)}_N\big(s-1+\nu\big)\,,
    \end{equation}
    where $\boldsymbol{D}^{(\nu)}_N$ is the canonical Dirichlet vector in Eq.~\eqref{eq:Dirichlet-vec} and the argument shift by $-(1-\nu)$ originates from the factor $m^{1-\nu}$ carried by ${\bf St}^{(\nu)}_m$ in Eq.~\eqref{eq:StokesS-tot}. 
    We stress that Eq.~\eqref{eq:L-funct-Stheta} is the Dirichlet series of the \emph{rescaled} residues ${\bf St}^{(\nu)}_m$, \emph{i.e.}, the Stokes constants in the sense of Eq.~\eqref{eq: Stokes0}, which follows the normalisation we use throughout. 
    By Lemma~\ref{lem: decomp-ei}, the \emph{Stokes--Dirichlet vector} $\boldsymbol\CL^{(\nu)}_N$ can be expressed as a linear combination of a set of Dirichlet $L$-functions, thus satisfying in part~3 of Definition~\ref{def:vv-MRS}. Explicitly, let us introduce the notation $J^{(\nu)} := J^{\rm even}_{{\rm all},\,2N}$ for $\nu = 0$ and $J^{(\nu)} := J^{\rm odd}_{{\rm all},\,2N}$ for $\nu = 1$ for simplicity. Applying Eq.~\eqref{eq:decomp_ei2-bis} in vector form yields
    \be
        \boldsymbol\CL^{(\nu)}_N(s) \= {\bf M}^{(\nu)}\,({\bf C}^ {(\nu)})^{1-\nu-s}\,\boldsymbol{L}^{(\nu)}\big(s-1+\nu\big)\, .
    \ee
    Here, using the notation introduced in the proof of Lemma~\ref{lem: decomp-ei}, $\boldsymbol{L}^{(\nu)}(s) := (L_j(s))_{j\in J^{(\nu)}}$ is the vector of Dirichlet $L$-functions $L_j$ of any modulus $m_j$ diving $2N$ with fixed parity, ${\bf C}^{(\nu)}:= \mathrm{diag}_{j\in J^{(\nu)}}(c_j)$ is the diagonal matrix of constants $c_j=2N/m_j$, and
    \be\label{eq:M-mat-Stheta}
        {\bf M}^{(\nu)} := 2 \ri (-1)^\nu \Om^{(\nu)}(S)\,{\bf O}^{(\nu)}
    \ee
    is the decomposition matrix of size $|\widetilde{G}^{(\nu)}_N|\times|J^{(\nu)}|$ analogous to the matrix ${\bf M}$ of Section~\ref{sec: qPochh-vv}, where
    \be
        {\bf O}^{(\nu)} := ({\bf O}_{kj})_{k \in \widetilde{G}^{(\nu)}_N\,,\; j \in J^{(\nu)}}\,.
    \ee

    Finally, a direct count shows that the matrix ${\bf M}^{(\nu)}$ in Eq.~\eqref{eq:M-mat-Stheta} is square, as is the corresponding matrix in Section~\ref{sec: qPochh-vv}.
    Recall that there are $\phi(m)$ Dirichlet characters modulo $m$. The identity $\sum_{m\vert2N} \phi(m) = 2N$ counts all Dirichlet characters whose moduli divide $2N$, partitioned by modulus. For each $m \geq 3$, the even and odd characters split equally, while the unique characters of moduli $1$ and $2$ are both even. 
    As a consequence, 
    \be
    |J^{\rm even}_{{\rm all},\,2N}| = 2 + (2N - 2)/2 = N + 1 \quad \text{ and } \quad |J^{\rm odd}_{{\rm all},\,2N}| = (2N - 2)/2 = N - 1  \, ,
    \ee
    matching the cardinalities $r_0=|\widetilde{G}^{(0)}_N|$ and $r_1=|\widetilde{G}^{(1)}_N|$ from Eq.~\eqref{eq:G-tilde-folded}, respectively. 
\end{proof}

\paragraph{Alternative derivation via Lawrence\texorpdfstring{--}{-}Zagier: Borel transform.}

Applying the well-known property $\Gamma\big(\ell+\frac{1}{2}\big) = \frac{\sqrt{\pi}}{2^{2\ell}} \frac{(2\ell)!}{\ell!}$, $\ell \in \ZZ_{\ge 0}$, the Borel transform of the asymptotic series in Eq.~\eqref{A:perturbativeexpansion}, expressed in the Borel variable $\zeta$ of Definition~\ref{def:vv-MRS} via $\xi = \zeta^2$, becomes
\begin{equation} \label{eq: borel-alternative}
    \mathcal{B}\big[\sqrt{\tau}\, \tEI[\theta_{N,k}^{(1)}](\tau)\big](\zeta)
     \= \sum_{\ell=0}^\infty \frac{a_{k,\ell}}{\Gamma(\ell+\tfrac{1}{2})}\, \zeta^{2\ell-1}
     \=\frac{1}{\zeta\sqrt{\pi}} \sum_{\ell=0}^\infty \frac{L\big(-2\ell,\,\omega^{(1)}_k(\,\cdot\,;N)\big)}{(2\ell)!}\, \big(2\CA\zeta\big)^{2\ell} \, ,
\end{equation}
with $\CA=\sqrt{\pi\ri/(2N)}$ defined in Eq.~\eqref{eq:poles-1}.
This power series admits a closed form, controlled by the meromorphic kernel
\begin{equation}\label{dfn:kernel}
    {\bm K}_{N,k}(x) := \frac{ \sinh\big( (N-k)x \big) }{ \sinh\big( Nx \big) }\,, \quad 0<k<2N\,,
\end{equation}
whose analytic structure we record first. The kernel is regular at $x=0$, decays as $|\Re(x)|\to\infty$, and is holomorphic away from the zeros at $x=\tfrac{\pi\ri m}{N}$, $m\in\ZZ_{\neq0}$, of the denominator, where it has \emph{at most} simple poles, with residues
\be\label{eq:kernel-residues}
    \frac{\sinh\big((N-k)\pi\ri m/N\big)}{N\cosh(\pi\ri m)} \= -\frac{\ri}{N}\,\sin\!\Big(\frac{\pi k m}{N}\Big) \, .
\ee
The pole at a given $m$ is present precisely when this residue is non-zero, that is, when $N \nmid km$. For all remaining values of $m$, the numerator of ${\bm K}_{N,k}$ vanishes as well, and the singularity is removable. In particular, ${\bm K}_{N,N}\equiv0$ has no poles, while for $k$ such that $g=(k,N)>1$ the poles of ${\bm K}_{N,k}$ are absent at $m\in\tfrac{N}{g}\ZZ$. Since $\sin\big(\tfrac{\pi k m}{N}\big)$ corresponds to the Stokes datum of Eq.~\eqref{eq:StokesS-tot} at index $m$ (up to a non-vanishing prefactor), this is the same condition that makes the corresponding Stokes constants zero, and it is harmless in that all sums below are over the residues in Eq.~\eqref{eq:kernel-residues}, which we allow to vanish.
The kernel in Eq.~\eqref{dfn:kernel} therefore equals its symmetric Mittag--Leffler pole expansion
\begin{equation}\label{eq:mittag-leffler}
    {\bm K}_{N,k}(\pi y)
     \=  -\frac{\ri}{N\pi} \lim_{m_\ast\to\infty} \sum_{m=-m_\ast}^{m_\ast} \frac{\sin\left( \tfrac{\pi km}{N} \right)}{y-\tfrac{\ri m}{N}} \, ,
\end{equation}
or its manifestly even form
\begin{equation}\label{eq:ML-paired}
    {\bm K}_{N,k}(x) \= \frac{2\pi}{N^2}\sum_{m=1}^{\infty} \frac{m\,\sin\!\big(\tfrac{\pi k m}{N}\big)}{x^2+\tfrac{\pi^2 m^2}{N^2}}\,, 
\end{equation}
where we have paired the terms for $m$ and $-m$ using the oddness of the sine function.
We claim that the Borel transform in Eq.~\eqref{eq: borel-alternative} admits the closed form~\cite{Gukov:2016njj, Cheng:2018vpl}
\begin{equation}\label{eq:sihn_xi}
     \mathcal{B}\big[\sqrt{\tau}\, \tEI[\theta_{N,k}^{(1)}](\tau)\big](\zeta) \= \frac{1}{\zeta\sqrt{\pi}}\, {\bm K}_{N,k}(2\CA\zeta)\,, \quad 0<k<2N \, ,
\end{equation}
which we prove in the following two equivalent ways.
 
\emph{1. From the poles (resurgent structure).} Under the change of variable $x=2\CA\zeta$, the poles at $x=\tfrac{\pi\ri}{N} m$ with $m \in \ZZ_{\ne 0}$ are sent to $\zeta=\CA m$ via $\CA^2=\tfrac{\pi\ri}{2N}$, and the kernel pole expansion in Eq.~\eqref{eq:ML-paired} reproduces term by term the Stokes series in Eq.~\eqref{eq:Borel-nu}: the residues of ${\bm K}_{N,k}$ coincide with the Stokes constants of Eq.~\eqref{eq:StokesS-tot}. Since both sides of Eq.~\eqref{eq:sihn_xi} are given by the same convergent pole sum, this proves our claim. As expected, the Borel transform has a two-sided tower of equally spaced simple poles at the points in Eq.~\eqref{eq:poles-1}.

\emph{2. From the coefficients (functional equation).} Expanding each summand of Eq.~\eqref{eq:ML-paired} geometrically for $|\zeta|<\CA$---the expected radius of convergence---and swapping the absolutely convergent sums yields
\be \label{eq: bracket-eq}
    {\bm K}_{N,k}(x) \= \sum_{\ell=0}^\infty\left[\frac{2(-1)^\ell N^{2\ell}}{\pi^{2\ell+1}}\sum_{m=1}^\infty\frac{\sin\big(\tfrac{\pi k m}{N}\big)}{m^{2\ell+1}}\right]x^{2\ell}\,.
\ee
At the same time, comparing Eqs.~\eqref{eq: borel-alternative} and~\eqref{eq:sihn_xi}, the claimed closed form of the Borel transform is equivalent to the generating-function identity
\be
    \sum_{\ell=0}^\infty \frac{L\big(-2\ell,\,\omega^{(1)}_k(\,\cdot\,;N)\big)}{(2\ell)!}\, x^{2\ell} \= {\bm K}_{N,k}(x)\,, \quad x = 2\CA\zeta\,,
\ee
which by Eq.~\eqref{eq: bracket-eq} equals the statement
\be
L\big(-2\ell,\,\omega^{(1)}_k(\,\cdot\,;N)\big) =\frac{2(-1)^\ell N^{2\ell} (2\ell)!}{\pi^{2\ell+1}}\sum_{m=1}^\infty\frac{\sin\big(\tfrac{\pi k m}{N}\big)}{m^{2\ell+1}}\, , \quad \ell \in \ZZ_{>0} \, .
\ee
This is precisely the functional equation of the $L$-series of the odd, period-$2N$, mean-zero function $\omega^{(1)}_k(\,\cdot\,;N)$, evaluated at the negative even integers $-2 \ell$, and follows independently by comparing the two coefficient formulae established for the same asymptotic expansion in Eqs.~\eqref{A:perturbativeexpansion} and~\eqref{eq:c1-LZ} (see proof of Proposition~\ref{prop: EI1-asympt-new}). In this sense, the closed form in Eq.~\eqref{eq:sihn_xi} is the Borel-plane incarnation of the underlying functional equation.

We note that the kernel ${\bm K}_{N,k}$ is the key object governing all structures that follow: its poles carry the Stokes data, its Laplace transforms along rotated rays are the lateral Borel--Laplace sums entering the median resummation of Proposition~\ref{prop:median}, and its Gaussian-integral avatar is the Eichler-type identity in Eq.~\eqref{Eichler-type-identity-1-new} of Section~\ref{sec:bimodular-new}, where the rotation of the integration contour across the poles of ${\bm K}_{N,k}$ gives the quantum modular behaviour of the $q$-series vector $\bEI^{(1)}_N$ a geometric origin.

\subsubsection{The paradigm}\label{subsec:paradigm_theta}

Here, we show that a variant of the modular resurgence paradigm applies to Eichler integrals of unary theta series. To do so, we compute the discontinuities of the formal power series $\tEI[\theta_{N,k}^{(\nu)}]$ in Eq.~\eqref{eq: tilde-Phi}.\footnote{A result similar to Lemma~\ref{lem:disc} is stated in Proposition~3.6 in~\cite{2025arXiv250500799M}.}

\begin{lemma}\label{lem:disc}
    For $\nu=0,1$, the discontinuity of the vector of asymptotic series $\tbEI^{(\nu)}_N$, defined in Eq.~\eqref{eq: tilde-Phi}, is given by
    \begin{equation}\label{eq:disc_Phi}
        {\rm disc}_{\frac{\pi}{2}}\big[\tau^{3/2-\nu}\,\tbEI^{(\nu)}_N\big](\tau) \= (-1)^\nu 2 \ri \, \Om^{(\nu)}(S) \, \bEI^{(\nu)}_N\Big(-\tfrac{1}{\tau}\Big)\,,
    \end{equation}
    with $\Om^{(\nu)}(S)$ the $S$-matrix in Lemma~\ref{lem:mod_theta} and $\bEI^{(\nu)}_N$ the vector of Eichler integrals in Eq.~\eqref{eq:bEI-folded}.
    Moreover, 
    \be \label{eq:disc_Phi-slash}
        \tfrac{1}{2} {\rm disc}_{\frac{\pi}{2}}\big[\tau^{\eta}\,\tbEI^{(\nu)}_N\big](\tau) \= \tau^{\eta}\,\big(\bEI^{(\nu)}_N|_\eta S\big)(\tau) \, , \quad \eta = 3/2 - \nu \, ,
    \ee
    where $|_\eta$ is the weight-$\eta$ slash operator in Eq.~\eqref{eq: slash} with multiplier system $\Om^{(\nu)}$.
\end{lemma}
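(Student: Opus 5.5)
Starting from the closed-form Borel transform in Eq.~\eqref{eq:Borel-nu}, I will compute the discontinuity directly. In the $\xi$-plane, $\mathcal{B}[\tau^{3/2-\nu}\tbEI^{(\nu)}_N](\xi)$ has simple poles at $\eta_m=\tfrac{\pi\ri m^2}{2N}$, all on the ray $\arg\xi=\tfrac{\pi}{2}$. The prefactor $\xi^{1/2-\nu}$ has a branch point only at the origin, so it is single-valued near each $\eta_m$. The residue at $\xi=\eta_m$ is
\[
-\tfrac{1}{2\pi\ri}\big(\tfrac{2N\eta_m}{\pi\ri}\big)^{1/2-\nu}m^{2\nu-1}\,{\bf St}^{(\nu)}_m=-\tfrac{1}{2\pi\ri}\,{\bf St}^{(\nu)}_m ,
\]
because $\tfrac{2N\eta_m}{\pi\ri}=m^2$ and $(m^2)^{1/2-\nu}m^{2\nu-1}=1$. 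By the simple-pole discussion around Eq.~\eqref{eq: Stokes1-poles}, the discontinuity should therefore be $\sum_{m\ge1}{\bf St}^{(\nu)}_m\,\re^{-\eta_m/\tau}$.

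Before using this, I need to check that the Laplace transform in the $\xi$ variable conjugate to $\tau$ is the one used for the $\tau^{3/2-\nu}$-shifted series. I also need to confirm the sign conventions: $s_{\theta_+}-s_{\theta_-}$ is the clockwise contour around the poles, and it yields $+S_\rho\re^{-\rho/\tau}$ exactly when the local form is $-S_\rho/(2\pi\ri(\xi-\rho))$. Both orientation conventions are fixed by Eq.~\eqref{eq: Stokes0}. To exchange the sum over poles with the contour deformation, I will use that the tail of the pole sum decays uniformly: the terms behave like $m^{1-\nu}/(\xi-\eta_m)$ with $\eta_m\sim m^2$, which gives absolute convergence. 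This justifies summing residues term by term for $\tau$ in the region $\Re(\re^{-\ri\pi/2}\tau)>0$.

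Next I will substitute Eq.~\eqref{eq:StokesS-tot}:
\[
\sum_{m\ge1}{\bf St}^{(\nu)}_m\re^{-\eta_m/\tau}=2\ri(-1)^\nu\,\Om^{(\nu)}(S)\sum_{m\ge1}m^{1-\nu}\boldsymbol\omega^{(\nu)}(m;N)\,\re^{-\pi\ri m^2/(2N\tau)} .
\]
The $q$-series rewriting in the proof of Proposition~\ref{prop: EI1-asympt-new}, which is valid for both $\nu=0,1$ by splitting $n\in\IZ$ into $\pm n$, identifies the inner sum with ${\sf EI}[\theta^{(\nu)}_{N,k}](\tau')$, where $\re^{\pi\ri m^2\tau'/(2N)}$ is evaluated at $\tau'=-1/\tau$. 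This proves Eq.~\eqref{eq:disc_Phi}. For $\tau$ in the stated region, $-1/\tau\in\IH$, so the series converges.

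For Eq.~\eqref{eq:disc_Phi-slash}, I will unwind the slash operator in Eq.~\eqref{eq: slash} with $\gamma=S$ ($c=1$, $d=0$):
\[
(\bEI^{(\nu)}_N|_\eta S)(\tau)=\tau^{-\eta}\,\Om^{(\nu)}(S)^{-1}\bEI^{(\nu)}_N(-1/\tau).
\]
So it suffices to show that $(-1)^\nu\ri\,\Om^{(\nu)}(S)=\Om^{(\nu)}(S)^{-1}$, i.e.\ $\Om^{(\nu)}(S)^2=(-1)^\nu(-\ri)=(-1)^{\nu+1}\ri$. This is exactly the identity recorded in Remark~\ref{rmk: unary-identities}.

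I expect the main obstacle to be bookkeeping rather than analysis. The first issue is that the branch of $\xi^{1/2-\nu}$ must be consistent with the principal branch of $\tau^{\eta}$ in the slash operator. The second is that, for $\nu=0$, the subtracted pole term $\tfrac{\ri}{\pi\tau}$ and the $m$ with vanishing Stokes constants cause no trouble, since they contribute nothing to the discontinuity. If the sign of $\Om^{(\nu)}(S)^2$ disagrees with the weight-$\eta$ multiplier, I would recheck it directly via Gauss-sum orthogonality from Eq.~\eqref{eq: Om}.
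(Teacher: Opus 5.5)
Your argument is the paper's proof with more detail. The paper likewise reads off the simple poles and Stokes vectors ${\bf St}^{(\nu)}_m$ from Eq.~\eqref{eq:Borel-nu}, sums the contributions as in Eq.~\eqref{eq: Stokes1-poles}, and substitutes Eq.~\eqref{eq:StokesS-tot} to recognize $\bEI^{(\nu)}_N(-1/\tau)$. It then obtains the slash form from $\Om^{(\nu)}(S)^2=(-1)^{\nu+1}\ri\,\boldsymbol 1$. Your residue computation, orientation check and branch remarks are all correct.

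One justification you add is wrong as stated: the absolute convergence of the pole sum. The $m$-th summand of Eq.~\eqref{eq:Borel-nu} is $m^{2\nu-1}{\bf St}^{(\nu)}_m/(\xi-\eta_m)\propto m^{\nu}\,\Om^{(\nu)}(S)\boldsymbol\omega^{(\nu)}(m;N)/(\xi-\eta_m)$; you dropped the prefactor $m^{2\nu-1}$. So the summand is $O(m^{-2})$ for $\nu=0$ but only $O(m^{-1})$ for $\nu=1$, and the Borel series is not absolutely convergent when $\nu=1$. Your own estimate $m^{1-\nu}/m^{2}$ would not give absolute convergence for $\nu=0$ either.

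For $\nu=1$ you can justify the term-by-term residue sum by grouping $m$ into blocks of length $2N$:
\begin{itemize}
\item The components of $\Om^{(1)}(S)\boldsymbol\omega^{(1)}(m;N)$ are proportional to $\sin(\pi k m/N)$, which sums to zero over a period.
\item On the lateral rays $\arg\xi=\tfrac{\pi}{2}\pm\epsilon$ one has $|\xi-\eta_m|\ge\sin\epsilon\,\max(|\xi|,|\eta_m|)$.
\item Together these make the block sums $O(m^{-2})$ uniformly on the rays, so dominated convergence applies to both lateral Laplace integrals.
\end{itemize}
This is the same device the paper invokes in the footnote to the proof of Proposition~\ref{prop:median}. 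The paper's proof of this lemma does not address the interchange at all, so this is a repair of your extra rigor rather than a departure from the paper's route.
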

\begin{proof}
    As shown in the proof of Proposition~\ref{prop:falsetheta-vvmrs}, the Borel transform $\mathcal{B}[\tau^{3/2-\nu}\tbEI^{(\nu)}_N](\xi)$ in Eq.~\eqref{eq:Borel-nu} has only simple poles at $\xi=\tfrac{\pi\ri}{2N}m^2$, $m \in \IZ_{>0}$, with Stokes constants collected in the vectors ${\bf St}^{(\nu)}_m$ of Eq.~\eqref{eq:StokesS-tot}. Summing the corresponding exponentially small contributions, as in Eq.~\eqref{eq: Stokes1-poles}, gives the discontinuity directly in the folded vector form
    \begin{equation} \label{eq:disc_Phi-steps}
       \begin{aligned}
            {\rm disc}_{\frac{\pi}{2}}\big[\tau^{3/2-\nu}\,\tbEI^{(\nu)}_N\big](\tau)
            &\= \sum_{m=1}^\infty {\bf St}^{(\nu)}_m\, \re^{-\frac{\pi\ri m^2}{2N\tau}} \\
            &\= (-1)^\nu 2 \ri\,\Om^{(\nu)}(S)\sum_{m=1}^\infty m^{1-\nu}\,\boldsymbol\omega^{(\nu)}(m;N)\, \re^{-\frac{\pi\ri m^2}{2N\tau}}\\
            &\=(-1)^\nu 2 \ri\,\Om^{(\nu)}(S)\,\bEI^{(\nu)}_N\Big(-\tfrac{1}{\tau} \Big)\,,
       \end{aligned}
    \end{equation}
    where the second and third lines use the definitions of ${\bf St}^{(\nu)}_m$ and $\bEI^{(\nu)}_N$ in Eqs.~\eqref{eq:StokesS-tot} and~\eqref{eq:bEI-folded}, respectively.
    
    Finally, note that the Fricke-type transformation of Eq.~\eqref{eq: disc-f} here combines with the $S$-matrix to produce the modular $S$-transformation in Eq.~\eqref{eq: slash} for the Eichler integral functions. As a result, using $(\Om^{(\nu)}(S))^2 = (-1)^{\nu+1}\ri\,\boldsymbol 1$, Eq.~\eqref{eq:disc_Phi} can be equivalently expressed in the form of Eq.~\eqref{eq:disc_Phi-slash}.
\end{proof}

Therefore, for $\nu=0,1$, the $q$-series vector $\bEI_{N}^{(\nu)}$ in Eq.~\eqref{eq:bEI-folded} satisfies the following variation of the modular resurgence paradigm of Section~\ref{sec:paradigm}. 
In the diagram below, we omit the arguments of the functions to improve readability and adopt the vector notation. In particular, we use the vector of Dirichlet series $\pmb{\mathscr{L}}^{(\nu)}_N$ in Eq.~\eqref{eq:Dirichlet-vec} and the vectors of Stokes constants ${\bf St}^{(\nu)}_{m}$, $m \in \ZZ_{>0}$, in Eq.~\eqref{eq:StokesS-tot}.
\begin{equation}\label{diag:theta}
\begin{tikzcd}[column sep=2.2em, row sep=2.9em]
    \pmb{\mathscr{L}}^{(\nu)}_N  
    \arrow[ddrrrrr,sloped,"\quad \quad \text{mer. continuation via modularity}", start anchor={[yshift=-1ex]}, end anchor={[yshift=1ex]}]
    \arrow[rr, "\text{inverse Mellin}","\text{Eq.}~\eqref{eq:disc_Phi}" swap]
    &  & \bEI_{N}^{(\nu)} \arrow[r,"\tau \rightarrow 0","\text{Eq}.~\eqref{eq: tilde-Phi}" swap] &  \tbEI_{N}^{(\nu)} \arrow[dd, gray!50, bend left=35, sloped, "\text{disc}\, /\, \text{Fricke} \quad", "\text{Eq.}~\eqref{eq:disc_Phi} \quad"'{yshift=-0.5ex}]
    \arrow[rr,"\text{resurgence}","\text{Eq.}~\eqref{eq:StokesS-tot}" swap]\arrow[l,bend right=35,red!50,"\text{Eq.~\eqref{eq:med-theta}}","\CS^{\rm med}_{\pi/2}" swap]  & & \{{\bf St}_{m}^{(\nu)}\} \arrow[dd,sloped,"\text{Dirichlet series}","\text{Eq.}~\eqref{eq:L-funct-Stheta}" swap] \\  \\
    \{ {\Om}^{(\nu)}(S) \, 
    {\bf St}_{m}^{(\nu)} \} \arrow[uu,blue!50, "\text{Dirichlet series}", "{\ri^{2\omega}(\Om^{(\nu)}(S))^{2}\,=\,\boldsymbol{1}}"',sloped] & &  {\Om}^{(\nu)}(S) \, 
    \tbEI_{N}^{(\nu)} \arrow[uu,gray!50, bend left=35, sloped,
    "\text{disc} \, /\, \text{Fricke} \quad \quad"] \arrow[r,bend right=35,red!50,"\CS^{\rm med}_{\pi/2}", swap] \arrow[ll,blue!50,"\text{resurgence}"] & {\Om}^{(\nu)}(S) \, 
    \bEI_{N}^{(\nu)}  \arrow[l,blue!50,"\tau \rightarrow 0"] & & \Om^{(\nu)}(S)\,\pmb{\mathscr{L}}^{(\nu)}_N
    \arrow[uulllll,sloped, start anchor={[yshift=1ex]}, end anchor={[yshift=-1ex]}]\arrow[ll,blue!50, "\text{inverse}"', "\text{Mellin}"] 
    \arrow[uulllll,sloped,swap,"\quad \quad  \text{Lem.}~\ref{lem:functionalRel-Lfn-theta}", start anchor={[yshift=1ex]}, end anchor={[yshift=-1ex]}]
\end{tikzcd}
\end{equation}
It is worth emphasising that the appearance of the $S$-matrix $\Om^{(\nu)}(S)$ in the diagonal arrow of the diagram above is not an accident: it is the resurgent imprint of the classical modularity of the underlying unary theta series. The chain of implications runs as follows. The vector of theta series $\boldsymbol\theta^{(\nu)}_N$ is a vector-valued modular form for the full modular group $\mathsf{SL}_2(\IZ)$ with $S$-multiplier $\Om^{(\nu)}(S)$ (Lemma~\ref{lem:mod_theta}).
By the classical Hecke--Mellin correspondence, its modular transformation is equivalent to the functional equation of the associated completed Dirichlet vector $\boldsymbol\Lambda^{(\nu)}_N$, namely $\boldsymbol\Lambda^{(\nu)}_N(s) = \ri^\omega\,\Om^{(\nu)}(S)\,\boldsymbol\Lambda^{(\nu)}_N(\omega-s)$ with $\omega=1/2+\nu$ (Lemma~\ref{lem:functionalRel-Lfn-theta}). This can be re-expressed as the functional equation for the theta-Mellin Dirichlet series $\pmb{\mathscr{L}}^{(\nu)}_N$ in Eq.~\eqref{eq: Phi-step2}, that is, 
\begin{equation}
        \pmb{\mathscr{L}}^{(\nu)}_N\left(1-s\right) \=\ri^{\omega} \left(\frac{2N}{\pi} \right)^{-3/2+\nu+2s}\frac{\Gamma\left(s-\tfrac{1}{2}+\nu\right)}{\Gamma\left(1-s\right) } \, \Om^{(\nu)}(S)\, \pmb{\mathscr{L}}^{(\nu)}_N\left(s-\tfrac{1}{2}+\nu\right) \, .
\end{equation}
At the same time, up to the pre-factor $(-1)^\nu 2\ri$, the vector of Dirichlet series of the Stokes constants ${\boldsymbol \CL}_N^{(\nu)}(2s)$},
computed explicitly in Eq.~\eqref{eq:L-funct-Stheta}, is precisely the image $\Om^{(\nu)}(S)\,\pmb{\mathscr{L}}^{(\nu)}_N\left(s-\tfrac{1}{2}+\nu\right)$ appearing on the right-hand side of this functional equation.
The bottom row of the diagram is therefore the $S$-transform of the top row by structural necessity: it does not introduce a genuinely new vector of Dirichlet series, but reproduces the same data viewed through the modular action of $S$.
The fact that the diagram closes is then a consequence of the involution property of the $S$-matrix together with its weight phase: a direct computation shows that 
\be \label{eq: Om-square-id}
\big(\Om^{(\nu)}(S)\big)^2 = (-1)^{\nu+1}\,\ri\,\boldsymbol{1} \, , 
\ee
so that restoring the weight factor $\ri^{2\omega}=(-1)^\nu\ri$ with $\omega=\tfrac12+\nu$, as in Eq.~\eqref{eq:func-L}, returns the loop to the Dirichlet vector $\pmb{\mathscr{L}}^{(\nu)}_N$. Explicitly,
\be
\ri^{2\omega}\big(\Om^{(\nu)}(S)\big)^2\,\pmb{\mathscr{L}}^{(\nu)}_N = \pmb{\mathscr{L}}^{(\nu)}_N \, . 
\ee
Note that $\Om^{(\nu)}(S)$ being of order $8$, equivalently $\big(\Om^{(\nu)}(S)\big)^4 = -\boldsymbol{1}$, is a metaplectic signature: the multiplier represents the double cover $\mathsf{Mp}_2(\IZ)$ (Remark~\ref{rmk: unary-identities}), and restoring the weight-phase $\ri^{2\omega}$ realizes the order-two central element $S^2 = -I$ acting trivially on $\pmb{\mathscr{L}}^{(\nu)}_N$ and closes the loop.

\begin{rmk}
Let $\chi_j$ be a Dirichlet character of modulus~$2N$.
Taking the inverse Mellin transform of its Dirichlet $L$-function $L_j(s)$, up to the elementary kernel $\Gamma(s)\,\big(\tfrac{2N}{\pi}\big)^{s}$ and the substitution $s\mapsto 2s-1+\nu$ as in Eq.~\eqref{eq:mellin_eichler}, produces the $q$-series
\be \label{eq: rmk-ei-dir}
    {\sf EI}[\Theta^{(\nu)}_{N,j}](\tau)= \sum_{n\in \IZ} \mathrm{sgn}(n)\, n^{1-\nu}\,\chi_{j}(n)\, q^{\frac{n^2}{4N}} \,,
\ee
where $\Theta^{(\nu)}_{N,j}(\tau)$ is the Dirichlet theta series of $\chi_j$ and ${\sf EI}[\Theta^{(\nu)}_{N,j}](\tau)$ its Eichler integral.
Under $n \mapsto -n$, the summand picks up a factor $(-1)^{\nu}\chi_j(-1)$, so the series in Eq.~\eqref{eq: rmk-ei-dir} vanishes identically unless $j \in \Jeven_{2N}$ for $\nu=0$ or $j \in \Jodd_{2N}$ for $\nu=1$.
In the nontrivial cases, it reduces to the one-sided series
\be \label{eq: rmk-ei-dir2}
    {\sf EI}[\Theta^{(\nu)}_{N,j}](\tau)= \begin{cases}
        2\sum\limits_{n=1}^\infty n\,\chi_{j}(n)\, q^{\frac{n^2}{4N}} &\mbox{if} \;\; \nu=0 \;\; \mbox{and} \;\; j \in \Jeven_{2N}
        \\ \\
        2\sum\limits_{n=1}^\infty \chi_{j}(n)\, q^{\frac{n^2}{4N}} &\mbox{if} \;\; \nu=1 \;\; \mbox{and} \;\; j \in \Jodd_{2N}
    \end{cases} \, .
\ee
The modular behaviour of these $q$-series is clearest through their finite Fourier dual. Since $\chi_j(n)$ depends only on the congruence class of $n$ modulo $2N$, the Dirichlet theta series $\Theta^{(\nu)}_{N,j}$ is the character-basis counterpart of the unary theta series $\theta^{(\nu)}_{N,k}$ in Eq.~\eqref{eq:theta-un}. Concretely,
\begin{equation}
\Theta^{(\nu)}_{N,j}=\sum_{k\in\ZZ/2N}\chi_j(k)\,\theta^{(\nu)}_{N,k} \, ,
\end{equation} 
whose modular $S$-transformation goes via the Fourier matrix $\widehat\Om^{(\nu)}_{kk'}(S)$ in Eq.~\eqref{eq: Om-tilde}, so that
\be
    \Theta^{(\nu)}_{N,j}\big|_\omega S = \frac{(-1)^\nu}{\sqrt{2\ri N}} \sum_{k'\in\ZZ/2N}\Big(\sum_{k\in\ZZ/2N}\chi_j(k)\,\re^{2\pi\ri \frac{k k'}{2N}}\Big)\theta^{(\nu)}_{N,k'}\,, \quad \omega=1/2+\nu \, .
\ee

Assume now that $\chi_j$ is \emph{primitive}. Then, its Gauss sum is separable at every $k'\in\ZZ/2N$, that is,
\be \label{eq: gauss-id}
    \sum_{k\in\ZZ/2N}\chi_j(k)\,\re^{2\pi\ri \frac{k k'}{2N}}\=\overline{\chi_j}(k')\,\mathscr{G}(\chi_j)\,,
\ee
a property\footnote{For $k'$ coprime to the modulus, the identity in Eq.~\eqref{eq: gauss-id} holds for any $\chi_j$ and is the one used in Eq.~\eqref{eq: FT-omega2}.} that in fact characterizes primitive characters~\cite[Thm.~8.19]{Apostol}. 
The $S$-image under the slash operator is therefore 
\be
    \Theta^{(\nu)}_{N,j}\big|_\omega S = \mathscr{G}(\chi_j) \frac{(-1)^\nu}{\sqrt{2\ri N}} \sum_{k'\in\ZZ/2N}\overline{\chi_j}(k')\, \theta^{(\nu)}_{N,k'} = \mathscr{G}(\chi_j)\,\Theta^{(\nu)}_{N,\bar\jmath} \,,
\ee
where $\bar\jmath$ labels the complex conjugate of the character labelled by $j$. Hence, the $S$-action closes on the family of primitive characters, and each such $\Theta^{(\nu)}_{N,j}$ is a modular form of weight $\omega=1/2+\nu$~\cite{Shimura73, SerreStark}.

When $\chi_j$ is \emph{imprimitive}, separability of the Gauss sum fails precisely on the classes $k' \in G_{2N}$ with $\gcd(k',2N)>1$, and the $S$-image acquires components along the unary theta series $\theta^{(\nu)}_{N,k'}$ supported on the non-invertible residue classes $k'$, which no linear combination of the Dirichlet theta series $\Theta^{(\nu)}_{N,j}$ can reach: the family does not close under $S$ at level $2N$. The $q$-series $\Theta^{(\nu)}_{N,j}$ remains, nevertheless, a weight-$\omega$ modular form on a congruence subgroup of $\mathsf{SL}_2(\ZZ)$ by virtue of the decomposition
\be
    \Theta^{(\nu)}_{N,j}(\tau)\=\sum_{\ell\,\mid\,h_j} \ell^\nu \,\mu(\ell)\,\chi_{j,\ast}(\ell)\,\sum_{n=1}^\infty n^\nu \, \chi_{j,\ast}(n)\,q^{\frac{\ell^2 n^2}{4N}}
\ee
into rescaled theta series of the primitive inducing character $\chi_{j,\ast}$ of modulus $D_j \vert 2N$, with $h_j = 2N/D_j$ and $\mu$ the M\"obius function. What is lost in the imprimitive case is not modularity itself but membership in a vector-valued family at level $2N$.
Restricting to the primitive even (resp.\ odd) characters modulo $2N$, the $q$-series in Eq.~\eqref{eq: rmk-ei-dir2} package into two holomorphic vector-valued quantum modular forms of weight $\eta=3/2-\nu$, whose asymptotic expansions are vector-valued MRSs by the arguments of Section~\ref{sec:MRS-EI}.
\end{rmk}

\subsection{Verification of the conjectures}

We show that Conjectures~\ref{conj:vvMR-conj1} and~\ref{conj:vvMR-conj2} of Section~\ref{sec:conjectures} hold for the $q$-series vectors $\bEI^{(\nu)}_N$ in Eq.~\eqref{eq:bEI-folded} for $\nu=0,1$. Conjecture~\ref{conj:vvMR-conj2} is established via the cocycle computation of Proposition~\ref{prop:qm-ei}, while Conjecture~\ref{conj:vvMR-conj1} follows from the Dirichlet series decomposition in Lemma~\ref{lem: decomp-ei}, which supplies its hypothesis, together with the resummation computation in Proposition~\ref{prop:median}, which supplies its conclusion.

\subsubsection{Quantum modularity}\label{sec:quantum-modularity-EI}

Taking Eichler integrals of modular forms breaks modular invariance, resulting in the construction of quantum modular forms.\footnote{Bringmann and Rolen proved the same statement of Proposition~\ref{prop:qm-ei} for non-holomorphic Eichler integrals and $\tau\in\QQ$~\cite{BR}; Goswami and Osburn also proved the quantum modularity of $\theta^{(1)}_{N,k}$ under the action of congruence subgroups~\cite{GO}.} Here, we prove the (vector-valued) quantum modular behaviour of Eichler integrals of unary theta series.

\begin{prop}\label{prop:qm-ei}
     For $\nu=0,1$, the vector of $q$-series $\bEI^{(\nu)}_N(\tau)$, $\tau \in \IH$, defined in Eq.~\eqref{eq:bEI-folded}, is a vector-valued quantum modular form for $\mathsf{SL}_2(\ZZ)$ of weight $\eta=3/2-\nu$. More precisely, for every $\gamma=\left(\begin{smallmatrix}
        a & b\\
        c & d
    \end{smallmatrix}\right)\in\mathsf{SL}_2(\ZZ)$, the slashed cochain
    \begin{equation}\label{eq:slash-cochain-ei}
        h_\gamma[\bEI^{(\nu)}_N]:=\bEI^{(\nu)}_N|_\eta\gamma-\bEI^{(\nu)}_N \, ,
    \end{equation}
    formed with the weight-$\eta$ slash operator of Eq.~\eqref{eq: slash} and the same multiplier system $\Om^{(\nu)}$ of Eq.~\eqref{eq:mod_trans}, extends holomorphically to the cut plane $\CC_\gamma$ in Eq.~\eqref{eq: C_gamma}.
\end{prop}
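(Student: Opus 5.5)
The natural route is the classical Eichler-integral cocycle argument, run in vector form with the integral representations in Eqs.~\eqref{eq:tilde_int_0-setting} and~\eqref{eq:tilde_int_1}. By the additive cocycle identity in Eq.~\eqref{eq: slash-cocycle}, it suffices to check holomorphic extension on the generators $T$ and $S$ of $\mathsf{SL}_2(\IZ)$. The general $\gamma$ then follows by writing $\gamma$ as a word in $T,S$. One must check that the successive slashes of $h_S$, $h_T$ remain holomorphic on $\CC_\gamma$, and I would argue this directly from the integral formula below rather than from the cocycle bookkeeping.

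For $T$ the statement is immediate. Each component of $\bEI^{(\nu)}_N$ is a sum over $n\equiv k \pmod{2N}$ of $q^{n^2/4N}$, and $n^2/4N\equiv k^2/4N \pmod 1$. Hence $\bEI^{(\nu)}_N(\tau+1)=\Om^{(\nu)}(T)\bEI^{(\nu)}_N(\tau)$ and $h_T=0$.

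The substantive step is the general formula. For every $\gamma$ I would derive
\[
h_\gamma[\bEI^{(\nu)}_N](\tau)=\kappa_\nu\int_{\gamma^{-1}(\ri\infty)}^{\ri\infty}\boldsymbol\theta^{(\nu)}_N{}'(t)\,(t-\tau)^{\eta-2}\,dt
\]
for $\nu=0$, with $\boldsymbol\theta^{(0)}_N{}'$ the derivative of the theta vector. For $\nu=1$ the integrand is $\boldsymbol\theta^{(1)}_N(t)(t-\tau)^{-1/2}$, with the corresponding constant. The derivation substitutes $t\mapsto\gamma t$ in the integral representation of $\bEI^{(\nu)}_N(\gamma\tau)$. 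One then uses the modularity $\boldsymbol\theta^{(\nu)}_N(\gamma t)=(ct+d)^{\omega}\Om^{(\nu)}(\gamma)\boldsymbol\theta^{(\nu)}_N(t)$ from Lemma~\ref{lem:mod_theta} and the identity $\gamma t-\gamma\tau=(t-\tau)/((ct+d)(c\tau+d))$. The automorphy factors then combine so that the weight $2-\omega=\eta$ appears on the $\tau$ side. The same multiplier $\Om^{(\nu)}$ survives because $\omega+\eta=2$, and the $\Om^{(\nu)}(\gamma)^{-1}$ in the slash operator cancels it. For $\nu=0$ I would handle the derivative by differentiating the modular identity, or equivalently by an integration by parts turning $\theta'$ into $\theta$ against $(t-\tau)^{-3/2}$.

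The integration endpoint is the cusp $\gamma^{-1}(\ri\infty)=-d/c$. Since the vectors are cuspidal for $\nu=1$, the integral converges there and at $\ri\infty$. For $\nu=0$ the constant term of $\theta^{(0)}_{N,0}$ produces exactly the non-cuspidal piece. I expect it to contribute either a rational function of $\tau$ or to require the same pole subtraction as in Lemma~\ref{lem:asympfalsetheta}; either way the result is holomorphic on $\CC_\gamma$.

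Holomorphy then follows by taking the contour as the vertical ray from $-d/c$ to $-d/c+\ri\infty$. The integrand depends on $\tau$ only through $(t-\tau)^{\eta-2}$, which is holomorphic for $\tau$ off the ray $L_\gamma$. By the branch convention in the footnote to Eq.~\eqref{eq:tilde_int_0-setting}, the cut lies upward from $t$, so the integral is holomorphic away from $L_\gamma$. Rotating the path slightly about the endpoint $-d/c$, which leaves the integral unchanged, lets the cut be moved to any ray from $-d/c$ avoiding $\CC_\gamma$. This gives the extension to $\CC_\gamma$, and I would choose the half-line $(-\infty,-d/c]$ or $[-d/c,+\infty)$ according to the sign of $c$.

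I expect the main obstacle to be the branch bookkeeping for half-integral powers. One has to check that $(ct+d)^{\omega}(c\tau+d)^{\eta}$ and $(\gamma t-\gamma\tau)^{-1/2}$ factor with the principal branches without stray signs. These signs are precisely what make $\Om^{(\nu)}$ a projective, metaplectic multiplier. I would first verify the branches on $\gamma=S$, then propagate them through the cocycle. For $\nu=0$, the second difficulty is the non-cuspidal constant term, which must be controlled explicitly.
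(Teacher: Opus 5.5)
For $\nu=1$ your plan is the paper's argument: a convergent period integral $\int_{-d/c}^{\ri\infty}\boldsymbol\theta^{(1)}_N(t)(t-\tau)^{-1/2}dt$, followed by swinging the contour about $-d/c$. Treating general $\gamma$ directly rather than through generators also matches the paper.

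The gap is at $\nu=0$. Your displayed formula $h_\gamma\propto\int_{-d/c}^{\ri\infty}\tfrac{d}{dt}\boldsymbol\theta^{(0)}_N(t)\,(t-\tau)^{-1/2}dt$ diverges at the lower endpoint.

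\emph{Why it diverges.} Modularity gives $\boldsymbol\theta^{(0)}_N(t)=(ct+d)^{-1/2}\Om^{(0)}(\gamma)^{-1}\boldsymbol\theta^{(0)}_N(\gamma t)$, and $\gamma t\to\ri\infty$ as $t\to-d/c$. So the constant term $\mathbf{e}_0$ of $\theta^{(0)}_{N,0}$ reappears at the cusp as $(ct+d)^{-1/2}\Om^{(0)}(\gamma)^{-1}\mathbf{e}_0$, up to exponentially small corrections. This vector is nonzero; for $\gamma=S$ every component equals $\ri/\sqrt{2\ri N}$. Hence the derivative grows like $|t+d/c|^{-3/2}$, which is not integrable.

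\emph{Why differentiating the modular identity does not help.} $\tfrac{d}{dt}\boldsymbol\theta^{(0)}_N$ is only quasi-modular. Differentiating produces an extra term $\tfrac{c}{2}(ct+d)^{3/2}\Om^{(0)}(\gamma)\boldsymbol\theta^{(0)}_N(t)$, which after substitution yields a boundary term at $-d/c$ that is itself infinite. It cancels the divergence of your integral only in a joint limit with a cutoff at the cusp.

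\emph{Your guesses for the non-cuspidal piece.} Neither applies. No rational correction in $\tau$ arises. The $\tfrac{\ri}{\pi\tau}$ subtraction of Lemma~\ref{lem:asympfalsetheta} concerns the $\tau\to0$ asymptotics and plays no role in the cocycle.

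\emph{The fix.} Use your integration-by-parts alternative, but apply it before the modular substitution, which is what the paper does.
\begin{itemize}
\item Start from the regularized representation \eqref{eq:tilde_int_0}, which pairs $\boldsymbol\theta^{(0)}_N$ itself with the kernel $(t-\tau)^{-3/2}$.
\item Evaluate it at $\gamma\tau$ with regularization point $\gamma x$, and substitute $u=\gamma t$. Because $\boldsymbol\theta^{(0)}_N$ is genuinely modular of weight $\tfrac12$, the factors $(ct+d)^{1/2}(ct+d)^{3/2}(ct+d)^{-2}$ cancel identically. The slash operator then removes $(c\tau+d)^{3/2}\Om^{(0)}(\gamma)$.
\item The two boundary terms combine into $\tfrac{2c}{c\tau+d}\boldsymbol\theta^{(0)}_N(x)(x-\tau)^{1/2}$, which tends to $0$ as $x\to\tau$. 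This gives \eqref{eq:slash-cocycle-ei-0}.
\item That integral converges with no subtraction. At $\ri\infty$ the constant term is integrable against $|t-\tau|^{-3/2}$. At $-d/c$ the weight-$\tfrac12$ growth $|t+d/c|^{-1/2}$ is integrable against the bounded kernel.
\end{itemize}
With this formula in place of yours, your contour-deformation argument gives the holomorphic extension to $\CC_\gamma$ for $\nu=0$ as well.
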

\begin{proof}
We discuss the two cases of $\nu=0$ and $\nu=1$ separately.
\begin{itemize}
\item[(1)] The $q$-series ${\sf EI}[\theta^{(0)}_{N,k}](\tau)$ has the following integral representation. Since the kernel $(t-\tau)^{-3/2}$ is not integrable at $t=\tau$, the integral cannot be based at $\tau$ itself; we introduce instead an auxiliary regularization point $x=\tau+\ri\epsilon$, $\epsilon>0$, on the vertical ray above $\tau$, and set
\begin{equation}\label{eq:tilde_int_0}
    {\sf EI}[\theta^{(0)}_{N,k}](\tau)\=\frac{\sqrt{2N\ri}}{2\pi}\,\lim_{x\to\tau}\left[\,2\,\theta^{(0)}_{N,k}(x)\,(x-\tau)^{-1/2}-\int_x^{\ri\infty}\theta^{(0)}_{N,k}(t)\,(t-\tau)^{-3/2}\, dt\,\right],
\end{equation}
where the boundary term subtracts the divergence of the integral as $x \to \tau$ vertically, and the bracket approaches its limit at the rate $O\big((x-\tau)^{1/2}\big)$. Equivalently, integrating by parts, we recover the expression in Eq.~\eqref{eq:tilde_int_0-setting}, that is,
\begin{equation}\label{eq:tilde_int_0-ibp}
    {\sf EI}[\theta^{(0)}_{N,k}](\tau)\=-\frac{\sqrt{2N\ri}}{\pi}\int_\tau^{\ri\infty}\frac{d\theta^{(0)}_{N,k}}{dt}(t)\,(t-\tau)^{-1/2}\, dt\,,
\end{equation}
whose right-hand side converges absolutely and reproduces the $q$-series term-wise.\footnote{Parametrize the vertical ray by $t=\tau+\ri s$ and fix $z_n = \pi n^2/(2N)$ with $n\in \ZZ_{\ne 0}$, so that $\re^{2 \pi \ri n^2 t/(4N)}=q^{n^2/(4N)} \, \re^{-z_n s}$. The right-hand side of Eq.~\eqref{eq:tilde_int_0-ibp} is evaluated using the gamma integral $\int_0^\infty \re^{-z_n s}\,s^{-1/2}\,ds = \Gamma(1/2)\,z_n^{-1/2} = \sqrt{\pi/ z_n}$.
}

By the definition of the slash operator and Lemma~\ref{lem:mod_theta}, the $k$-th component of the cochain for $\gamma=\left(\begin{smallmatrix}
    a & b\\
    c & d
\end{smallmatrix}\right)\in\mathsf{SL}_2(\ZZ)$ is
\begin{equation}\label{eq:cocycle_vvqm-0}
    h_\gamma[\bEI^{(0)}_N]_k(\tau) \= (c\tau+d)^{-3/2}\sum_{k'\in\widetilde{G}^{(0)}_N} (\Om^{(0)}(\gamma))^{-1}_{kk'}\,{\sf EI}[\theta^{(0)}_{N,k'}](\gamma\tau) - {\sf EI}[\theta^{(0)}_{N,k}](\tau)\,.
\end{equation}
Substituting Eq.~\eqref{eq:tilde_int_0} evaluated at $\gamma\tau$ (with regularization point $\gamma(x)$) and at $\tau$ (with regularization point $x$), and applying the modular transformation for $\theta^{(0)}_{N,k}$ in Eq.~\eqref{eq:mod_trans-2}, the multiplier sum collapses via $(\Om^{(0)}(\gamma))^{-1}\Om^{(0)}(\gamma)={\bf 1}$.  The boundary contributions collect into the term $\tfrac{2c}{c\tau+d}\,\theta^{(0)}_{N,k}(x)\,(x-\tau)^{1/2}$, which vanishes in the limit $x\to\tau$, while the integral terms, after the change of variable $t=\gamma(s)$ and using $\gamma^{-1}(\ri\infty)=-d/c$ for $c>0$, combine into the right-hand side of Eq.~\eqref{eq:slash-cocycle-ei-0}.

\item[(2)] Using the integral representation of the $q$-series ${\sf EI}[\theta_{N,k}^{(1)}](\tau)$ in Eq.~\eqref{eq:tilde_int_1}, the cochain
\begin{equation}\label{eq:cocycle_vvqm-1}
    h_\gamma[\bEI^{(1)}_N]_k(\tau) \= (c\tau+d)^{-1/2}\sum_{k'\in\widetilde{G}^{(1)}_N}(\Om^{(1)}(\gamma))^{-1}_{kk'}\,{\sf EI}[\theta^{(1)}_{N,k'}](\gamma\tau) - {\sf EI}[\theta^{(1)}_{N,k}](\tau)
\end{equation}
can be computed analogously to the previous case. Substituting Eq.~\eqref{eq:tilde_int_1} at $\gamma\tau$ and at $\tau$, applying the modular transformation for $\theta^{(1)}_{N,k}$ in Eq.~\eqref{eq:mod_trans-2}, and performing the same change of variable $t=\gamma(s)$ yields the right-hand side of Eq.~\eqref{eq:slash-cocycle-ei-1}.
\end{itemize}

In both cases, the multiplier matrix cancels against its inverse and the resulting expression is manifestly free of $\Om^{(\nu)}(\gamma)$. For $k \in \widetilde{G}^{(\nu)}_N$,
\begin{subequations}\label{eq:slash-cocycle-ei}
\begin{align}
    h_\gamma[\bEI^{(0)}_N]_k(\tau)&=\frac{\sqrt{2N\ri}}{2\pi}\int_{-d/c}^{\ri\infty}\theta^{(0)}_{N,k}(t)\,(t-\tau)^{-3/2}\, dt\,, \label{eq:slash-cocycle-ei-0}\\ 
    h_\gamma[\bEI^{(1)}_N]_k(\tau)&=-\frac{1}{\sqrt{2N\ri}}\int_{-d/c}^{\ri\infty}\theta^{(1)}_{N,k}(t)\,(t-\tau)^{-1/2}\, dt\, \label{eq:slash-cocycle-ei-1}
\end{align}
\end{subequations}
are precisely the components of the cochain $h_\gamma[\bEI^{(\nu)}_N]$ in  Eq.~\eqref{eq:slash-cochain-ei}.

To justify a holomorphic extension to the domain $\CC_\gamma$ of Definition~\ref{def: vv-QMF}, choose $\gamma=\bigl(\begin{smallmatrix}a&b\\c&d\end{smallmatrix}\bigr)\in\mathsf{SL}_2(\IZ)$ with $c>0$. For fixed $t$ on the contour of integration, the kernel $(t-\tau)^{-\eta}$ is defined by continuity along the contour, normalized at the upper end by $\arg(t-\tau)\to\tfrac{\pi}{2}$ as $t\to\ri\infty$, corresponding to the branch $\arg(t-\tau)\in(-\tfrac{\pi}{2},\tfrac{3\pi}{2})$, so that the cut in the variable $t$ lies along the downward ray $\tau-\ri\,\IR_{\geq0}$, and the kernel is then continuous on the contour for every $\tau$ off it. As a function of $\tau$, the branch cut of $(t-\tau)^{-\eta}$ in the convention above is instead the upward ray $t+\ri\,\IR_{\geq0}$, so the straight-contour integral in Eq.~\eqref{eq:slash-cocycle-ei} defines a holomorphic function in $\tau$ on the complement of the vertical ray $L_\gamma:=-d/c+\ri\,\IR_{\geq0}$ (the union of the $\tau$-cuts $t+\ri\,\IR_{\geq0}$ over $t$ on the contour), and it represents the cochain for $\tau$ to the right of $L_\gamma$. 
To continue the integral across the ray, we deform the contour for a fixed $\tau$: since $\boldsymbol\theta^{(\nu)}_N$ is holomorphic on $\IH$ and the integral converges at both endpoints (see below), Cauchy's theorem allows deforming the vertical contour to any path $C\subset\overline\IH$ from $-d/c$ to $\ri\infty$ avoiding $\tau$, with the branch of $(t-\tau)^{-\eta}$ transported along $C$ from the principal value at the upper end; the integral depends only on the position of $\tau$ relative to $C$. Defining $h_\gamma[\bEI^{(\nu)}_N]_k(\tau)$ by the integral over any admissible $C$ leaving $\tau$ to its right yields a single-valued function on $\CC_\gamma=\CC\smallsetminus\RR_{\leq-d/c}$: for $\tau$ in a compact subset thereof, an admissible $C$ can be chosen locally uniformly, and holomorphy follows from the majorants below by Morera's theorem. 
The continuation is obstructed precisely when $\tau$ approaches $\RR_{\le-d/c}$, where every admissible contour is pinched between $\tau$ and the fixed endpoint $-d/c$.

It remains to verify convergence. Near the upper endpoint, as $t \to \ri\infty$, all components of $\boldsymbol\theta^{(\nu)}_N$ with nonzero exponents have Gaussian decay. The only possible constant term occurs in $\theta^{(0)}_{N,0}$; its contribution to the integrand decays as $|t-\tau|^{-3/2}$ and is therefore integrable. Near the lower endpoint, as $t\to-d/c$, Lemma~\ref{lem:mod_theta} gives $\boldsymbol\theta^{(\nu)}_N(t)=O\big(|t+d/c|^{-\omega}\big)$ with $\omega=1/2+\nu$, improved to exponential decay for $\nu=1$ by cuspidality. In either case, the integrand is integrable against the bounded kernel, uniformly for $\tau$ in compacts of $\CC_\gamma$ away from $C$. As admissible contours agree with the vertical contour near the two endpoints, this proves convergence for every admissible contour.

Finally, the case of $c<0$ follows by the same argument, with contour deformations in the opposite direction, yielding a holomorphic extension to $\CC_\gamma=\CC\smallsetminus\RR_{\ge-d/c}$.
\end{proof}

\subsubsection{Median resummation} \label{sed:EffectivenessMedianResum}

By Lemma~\ref{lem: decomp-ei}, the Dirichlet series $\CCL^{(\nu)}_k$ in Eq.~\eqref{eq:L}, that is, the Mellin transforms of the Eichler integrals ${\sf EI}[\theta_{N,k}^{(\nu)}]$, are appropriate linear combinations of a collection of $L$-functions, so the hypothesis of Conjecture~\ref{conj:vvMR-conj1} is satisfied. Here, we prove the effectiveness of the median resummation in reconstructing the $q$-series vectors $\bEI^{(\nu)}_N$ for $\nu=0,1$. Notice that the statement of Conjecture~\ref{conj:vvMR-conj1} applies component-wise.

\begin{prop}\label{prop:median}
For $\nu=0,1$, the vector of Eichler integrals $\bEI^{(\nu)}_N(\tau)$, defined in Eq.~\eqref{eq:bEI-folded}, is reconstructed via median resummation of its asymptotic vector $\tbEI^{(\nu)}_N(\tau)$ as $\tau\to 0$ with $\Im(\tau)>0$. Namely,
\begin{equation}\label{eq:med-theta}
    \mathcal{S}^{\rm med}_{\frac{\pi}{2}}\big[\tbEI^{(\nu)}_N\big](\tau)\= \bEI^{(\nu)}_N(\tau)-\delta_{\nu,0}\,\frac{\ri}{\pi\tau}\,\boldsymbol{1} \, , \quad \tau \in \HH \, ,
\end{equation}
which is intended to hold component-wise for each ${\sf EI}[\theta_{N,k}^{(\nu)}]$ with $k \in \widetilde{G}^{(\nu)}_N$.
\end{prop}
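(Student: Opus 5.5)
We work component-wise for fixed $k\in\widetilde{G}^{(\nu)}_N$ and with the rescaled series $\tau^{\eta}\,\tbEI^{(\nu)}_N$, $\eta=3/2-\nu$, whose Borel transform in the variable $\xi$ is the explicit pole sum of Eq.~\eqref{eq:Borel-nu}. Since multiplication by $\tau^\eta$ commutes with lateral Laplace transforms once the Laplace transform is adapted to the fractional power, it suffices to show
\be
\mathcal{S}^{\rm med}_{\frac{\pi}{2}}\big[\tau^{\eta}\tbEI^{(\nu)}_N\big](\tau)=\tau^\eta\Big(\bEI^{(\nu)}_N(\tau)-\delta_{\nu,0}\tfrac{\ri}{\pi\tau}\boldsymbol 1\Big).
\ee

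\emph{Step 1: lateral sums as an explicit integral.} Substitute Eq.~\eqref{eq:Borel-nu} into the Laplace integral along $\arg\xi=\tfrac{\pi}{2}\pm\epsilon$. Use the representation ${\bf St}^{(\nu)}_m=2\ri(-1)^\nu m^{1-\nu}\Om^{(\nu)}(S)\boldsymbol\omega^{(\nu)}(m;N)$ and exchange sum and integral; uniform convergence on the rotated rays follows from the $m^{-2}$ decay of the summand away from the poles. Each term gives an elementary integral $\int \re^{-\xi/\tau}\xi^{1/2-\nu}(\xi-\eta_m)^{-1}d\xi$, expressible through incomplete gamma / error-type functions. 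The median average kills half the residue at each pole, so the median sum equals the principal-value integral along $\arg\xi=\pi/2$.

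\emph{Step 2: identify the principal-value integral with the Eichler integral.} Rather than resumming termwise, we would compare with the integral representations Eqs.~\eqref{eq:tilde_int_0-setting}–\eqref{eq:tilde_int_1}. We rotate the $\tau$-integration ray $[\tau,\ri\infty)$ and apply the modular transformation of $\boldsymbol\theta^{(\nu)}_N$ under $S$ (Lemma~\ref{lem:mod_theta}). This rewrites ${\sf EI}$ as a Laplace-type integral of $\Om^{(\nu)}(S)\boldsymbol\theta^{(\nu)}_N(-1/t)$. Expanding the theta series of $-1/t$ in exponentials $\re^{-\pi\ri m^2/(2Nt)}$ then produces exactly the terms obtained in Step 1.

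An equivalent and cleaner route, at least for $\nu=1$, uses the closed form Eq.~\eqref{eq:sihn_xi}. Along $\arg\zeta=0$ in the natural variable $\zeta$, the Laplace transform of $\zeta^{-1}{\bm K}_{N,k}(2\CA\zeta)$ against a Gaussian kernel $\re^{-\zeta^2/\tau}$ is a Mordell-type integral. Expanding $\sinh((N-k)x)/\sinh(Nx)$ as a geometric series in $\re^{-2Nx}$ and integrating term by term reproduces $\sum_n \omega^{(1)}_k(n;N)q^{n^2/4N}$. The lateral rotations differ from this by half-residues that cancel in the median, by the Stokes data of Lemma~\ref{lem:disc}.

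\emph{Step 3: the $\nu=0$ correction.} For $\nu=0$ the same computation yields the Mellin-inversion pole at $s=1$ found in the proof of Lemma~\ref{lem:asympfalsetheta}. This pole is the constant $\frac{1}{\pi t}=\frac{\ri}{\pi\tau}$, which was removed before expanding, and it accounts for the $-\delta_{\nu,0}\tfrac{\ri}{\pi\tau}\boldsymbol 1$ term.

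The main obstacle will be Step 2: justifying the interchange of the infinite pole sum with the Laplace integral on the Stokes ray itself. There the poles lie on the contour and the principal value must be controlled uniformly in $m$. We would handle this by working on $\arg\xi=\tfrac\pi2\pm\epsilon$, where Step 1 is rigorous, identifying $s_{\pm}$ directly with $\tau^\eta\bEI(\tau)\mp\tfrac12\,{\rm disc}$ via Lemma~\ref{lem:disc}, and then averaging using Eq.~\eqref{eq: median2}. The identification for a single lateral sum is a contour-shift argument in which the accumulated residues form the convergent series $\bEI^{(\nu)}_N(-1/\tau)$. Finally, both sides are analytic in $\tau\in\HH$ and agree on the imaginary axis, so they agree on all of $\HH$.
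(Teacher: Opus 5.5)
Your Step~2 starts in the right place: inserting the $S$-transformation of $\boldsymbol\theta^{(\nu)}_N$ into the integral representation and substituting $w=-1/t$ turns $\tau^{\eta}\bEI^{(\nu)}_N(\tau)$ into a constant multiple of $\Om^{(\nu)}(S)\int_{-1/\tau}^{0}\boldsymbol\theta^{(\nu)}_N(w)\,(w+1/\tau)^{-\eta}\,dw$. This is also the paper's starting point. However, the step that carries the proposition, namely that this integral equals the \emph{median} Borel--Laplace sum, is only asserted (``produces exactly the terms obtained in Step~1''). Both concrete mechanisms you offer for it fail.

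\emph{The Gaussian route is wrong as stated.} The ray $\arg\zeta=0$ is $\arg\xi=0$, which lies strictly on one side of the Stokes ray $\arg\xi=\pi/2$. It therefore computes the lateral sum $s_{\frac{\pi}{2}-\epsilon}$, so there is nothing from which ``lateral rotations differ by half-residues''. Moreover, integrating the geometric expansion of ${\bm K}_{N,k}$ term by term gives
$\sqrt{\tau}\sum_{n>0}\omega^{(1)}_k(n;N)\,q^{n^2/4N}\,\mathrm{erfc}(n\CA\sqrt{\tau})$, not the bare $q$-series. Had it produced $\sqrt{\tau}\,{\sf EI}[\theta^{(1)}_{N,k}](\tau)$, the median would be off by $\tfrac12{\rm disc}$. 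What is actually needed is that the $\mathrm{erf}$-remainder equals $\tfrac12{\rm disc}$, which is a Mordell-type transformation equivalent to the proposition itself.

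\emph{Your fallback in the last paragraph is circular.} Contour shifts in the Borel plane together with Lemma~\ref{lem:disc} determine only $s_+-s_-$. They cannot express either lateral sum in terms of $\bEI^{(\nu)}_N(\tau)$; for that you need an independent evaluation of one lateral sum.

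The missing idea is the split $\int_{-1/\tau}^{0}=\int_{-1/\tau}^{\ri\infty}-\int_0^{\ri\infty}$, which is the $S$-cocycle identity Eq.~\eqref{eq:median-proof1-slash}.
\begin{itemize}
\item The first piece is $\bEI^{(\nu)}_N(-1/\tau)$, which by Lemma~\ref{lem:disc} is exactly \emph{half} the discontinuity. This is where the $\tfrac12$ of the median comes from.
\item The second piece, the period integral, is a lateral sum. Write $(t+1/\tau)^{-1/2}=\pi^{-1/2}\int_0^\infty v^{-1/2}\re^{-(t+1/\tau)v}\,dv$ and use $\int_0^{\ri\infty}\re^{(\eta_m-v)t}\,dt=(v-\eta_m)^{-1}$. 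This turns it into the Laplace integral along $\IR_{\geq0}$ of precisely the partial-fraction Borel transform Eq.~\eqref{eq:Borel-nu}. That contour can be rotated to $\arg v=\arg\tau\in(0,\pi/2)$ without crossing poles.
\end{itemize}
Applied to a single exponential, the same split proves your asserted termwise identity: each term of $\boldsymbol\theta^{(\nu)}_N(-1/t)$ gives, up to a constant, the principal-value Laplace integral of the corresponding pole. So your termwise route can be repaired exactly this way.

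Two smaller points also need fixing.
\begin{itemize}
\item For $\nu=1$ the summands of Eq.~\eqref{eq:Borel-nu} behave like $m\,\boldsymbol\omega^{(1)}(m;N)/\eta_m\sim 1/m$, not $m^{-2}$. Every interchange of sum and integral, on both the Borel side and the theta side near $t\to\ri\infty$, is therefore only conditionally justified. It needs grouping over periods of length $2N$ using the mean-zero property.
\item For $\nu=0$ the term $\tfrac{\ri}{\pi\tau}\boldsymbol 1$ must be produced by the resummation itself. It comes from the constant term ${\bf e}_0$ of $\boldsymbol\theta^{(0)}_N$, whose transform $\int_0^{\ri\infty}\re^{-vt}\,dt=v^{-1}$ sits at the origin of the Borel plane. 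It yields $\tfrac{\ri}{\pi\tau}\boldsymbol 1$ after using $\Om^{(0)}(S){\bf e}_0=(2\ri N)^{-1/2}\boldsymbol 1$. The Mellin pole at $s=1$ only explains why that term was subtracted from the asymptotic series, not why the resummation reproduces it.
\end{itemize}
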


\begin{proof}
Let us begin with $\nu=1$ and take $\theta=\arg(\tau)\in(0,\pi/2)$. The case of $\arg(\tau)\in(\pi/2,\pi)$ is treated analogously at the end.
From the cochain formula in Eq.~\eqref{eq:cocycle_vvqm-1} with $\gamma=S$, evaluated at $-1/\tau$, together with its integral expression in Eq.~\eqref{eq:slash-cocycle-ei-1} and the metaplectic relation $\ri\,(\Om^{(1)}(S))^{2}=-\boldsymbol 1$, we obtain the vector identity\footnote{Note the branch subtlety in relating the period integral to the cocycle: for $\arg(\tau)\in(0,\pi/2)$, the point $-1/\tau$ lies in the second quadrant, where the principal branch of the kernel is continuous along the contour, and the identification acquires a sign relative to the na\"ive substitution $\tau\mapsto-1/\tau$ in Eq.~\eqref{eq:slash-cocycle-ei-1}. This sign change across the ray $\arg(\tau)=\pi/2$ is itself a manifestation of the Stokes phenomenon established below.}
\begin{equation}\label{eq:median-proof1-slash}
    \ri\,\tau^{1/2}\,\bEI^{(1)}_N(\tau)\=\Om^{(1)}(S)\Big[\bEI^{(1)}_N(-1/\tau)-\frac{1}{\sqrt{2N\ri}}\int_{0}^{\ri\infty}\boldsymbol\theta_{N}^{(1)}(t)\,(t+1/\tau)^{-1/2}\, dt\Big]\,,
\end{equation}
where the integral of the vector $\boldsymbol\theta^{(1)}_N$ is understood component-wise. 
For the first term, Lemma~\ref{lem:disc} with weight $\eta=1/2$, written in the explicit form of Eq.~\eqref{eq:disc_Phi}, gives
\begin{equation}\label{eq:median-proof2-slash}
    \Om^{(1)}(S)\,\bEI^{(1)}_N(-1/\tau)\=\frac{\ri}{2}\,{\rm disc}_{\frac{\pi}{2}}\big[\tau^{1/2}\,\tbEI^{(1)}_N\big](\tau)\,.
\end{equation}
For the second term, we use the Gamma-function representation of the kernel
\begin{equation}
    (t+1/\tau)^{-1/2}\=\frac{1}{\sqrt{\pi}}\int_0^\infty v^{-1/2}\,\re^{-(t+1/\tau)v}\,dv\,,
\end{equation}
which converges absolutely along the contour since $\Re(t+1/\tau)=\Re(1/\tau)>0$ for $t\in[0,\ri\infty)$. Exchanging the two integrations\footnote{The resulting sum over $m$ in Eq.~\eqref{eq:median-gamma-kernel} converges conditionally, by the periodicity and mean-zero property of the function $\boldsymbol\omega^{(1)}(\,\cdot\,;N)$. The interchange of integrations is justified by grouping the terms over periods.} and evaluating the inner integral term-wise in the $q$-series expansion of the vector $\boldsymbol\theta^{(1)}_{N}$ via
\begin{equation}
    \int_0^{\ri\infty}\re^{\frac{\pi\ri m^2 t}{2N}}\,\re^{-vt}\,dt\=\frac{1}{v-\eta_m}\,,\quad \eta_m=\frac{\pi\ri m^2}{2N}\,, 
\end{equation}
we obtain that
\begin{equation}\label{eq:median-gamma-kernel}
    \int_0^{\ri\infty}\boldsymbol\theta_{N}^{(1)}(t)\,(t+1/\tau)^{-1/2}\,dt\=\frac{1}{\sqrt{\pi}}\int_0^{\infty}\frac{\re^{-v/\tau}}{\sqrt{v}}\,\sum_{m=1}^\infty\frac{m\,\boldsymbol\omega^{(1)}(m;N)}{v-\eta_m}\,dv\,,
\end{equation}
whose poles sit precisely at the Borel singularities in Eq.~\eqref{eq:poles-1}. The action of the multiplier matrix $\Om^{(1)}(S)$ is now immediate. Recall from Eqs.~\eqref{eq:StokesS-tot} and~\eqref{eq:Borel-nu} that 
\begin{equation}\label{eq:median-borel-id}
    \frac{1}{\sqrt{2N\ri}}\,\frac{1}{\sqrt{\pi v}}\sum_{m=1}^\infty\frac{m \, \Om^{(1)}(S)\,\boldsymbol\omega^{(1)}(m;N)}{v-\eta_m}\=-\ri\;\mathcal{B}\big[\tau^{1/2}\,\tbEI^{(1)}_N\big](v)
\end{equation}
is precisely the partial-fraction expansion of the Borel transform. 
Since the poles at $v=\eta_m$ lie on the positive imaginary axis, the contour $\IR_{\ge0}$ in Eq.~\eqref{eq:median-gamma-kernel} can be rotated to $\re^{\ri\theta}\IR_{\ge0}$ without crossing any singularity, identifying the $v$-integral with the Laplace integral of Eq.~\eqref{eq: Laplace}:
\begin{equation}\label{eq:median-laplace-id}
    \frac{1}{\sqrt{2N\ri}}\,\Om^{(1)}(S)\int_0^{\ri\infty}\boldsymbol\theta_{N}^{(1)}(t)\,(t+1/\tau)^{-1/2}\,dt\=-\ri\,s_\theta\big[\tau^{1/2}\,\tbEI^{(1)}_N\big](\tau)\,,
\end{equation}
where $s_\theta$ acts component-wise. Substituting Eqs.~\eqref{eq:median-proof2-slash} and~\eqref{eq:median-laplace-id} into Eq.~\eqref{eq:median-proof1-slash} and dividing by $\ri\,\tau^{1/2}$, we conclude
\begin{equation}\label{eq:median-proof3-slash}
   \bEI^{(1)}_N(\tau)\=s_\theta\big[\tbEI^{(1)}_N\big](\tau)+\frac{1}{2}\,{\rm disc}_{\frac{\pi}{2}}\big[\tbEI^{(1)}_N\big](\tau)\=\mathcal{S}^{\rm med}_{\frac{\pi}{2}}\big[\tbEI^{(1)}_N\big](\tau)\,,
\end{equation}
where the last equality holds because $s_\theta\big[\tbEI^{(1)}_N\big](\tau)$ for $\arg(\tau)\in(0,\pi/2)$ is the analytic continuation of the Borel--Laplace sum from below the Stokes ray at, and the median resummation is given by Eq.~\eqref{eq: median}.
In the symmetric case of $\arg(\tau)\in(\pi/2,\pi)$, the same computation yields $s_\theta-\tfrac12\,{\rm disc}_{\pi/2}$, which equals $\mathcal{S}^{\rm med}_{\pi/2}$ from above the ray. 

Let us now consider $\nu=0$. The same procedure applies with kernel 
\be
(t+1/\tau)^{-3/2}=\frac{2}{\sqrt\pi}\int_0^\infty v^{1/2}\re^{-(t+1/\tau)v} dv \, , 
\ee
using the vector $\boldsymbol\omega^{(0)}(m;N)$ and the partial-fraction expansion of the Borel trasform from Eqs.~\eqref{eq:StokesS-tot} and~\eqref{eq:Borel-nu}, establishing Eq.~\eqref{eq:med-theta} in vector form for $\nu=0$. 
The only crucial difference lies in the constant term of the theta vector $\boldsymbol\theta^{(0)}_N(t)={\bf e}_0+\big(\boldsymbol\theta^{(0)}_N(t)-{\bf e}_0\big)$ with ${\bf e}_0$ the indicator of the component $k=0$, reflecting the non-cuspidality of $\boldsymbol\theta^{(0)}_N$. Its inner Laplace transform $\int_0^{\ri\infty}\re^{-vt}\,dt=v^{-1}$ contributes at the \emph{origin} of the Borel plane, rather than at the simple poles at $\eta_m$. Explicitly, 
\be
\frac{2}{\sqrt\pi}\int_0^\infty v^{1/2}\,\re^{-v/\tau}\,v^{-1}\,dv=2\tau^{1/2} \, , 
\ee
and, after acting with the $S$-matrix, using $\Om^{(0)}(S)\,{\bf e}_0=(2\ri N)^{-1/2}\,\boldsymbol{1}$, we obtain that
\begin{equation}\label{eq:median2-origin}
    \frac{\sqrt{2N\ri}}{2\pi}\,\Om^{(0)}(S)\int_0^{\ri\infty}{\bf e}_0\,(t+1/\tau)^{-3/2}\,dt\=\frac{\tau^{1/2}}{\pi}\,\boldsymbol{1}\,,
\end{equation}
which is precisely $-\ri\,\tau^{3/2}$ times the singular elementary term $\tfrac{\ri}{\pi\tau}\boldsymbol 1$ of Lemma~\ref{lem:asympfalsetheta}. The cuspidal part is evaluated as in the $\nu=1$ case.
\end{proof}

\subsection{Bimodular completions of Eichler integrals}\label{sec:bimodular-new}

The quantum modularity of the Eichler integrals ${\sf EI}[\theta_{N,k}^{(\nu)}](\tau)$ in Eq.~\eqref{eq:tilde-th}, as established in Proposition~\ref{prop:qm-ei} via the computation of the cocycle, is due to the invariance of the upper integration limit at $\ri\infty$ in the integral representations in Eqs.~\eqref{eq:tilde_int_0-setting} and~\eqref{eq:tilde_int_1}. The same failure of modularity admits a complementary, geometric interpretation: it can be traced to the motion of an integration contour across the poles of a meromorphic kernel under the action of the modular group. 
To exhibit this mechanism in the case $\nu=1$, we recast the relevant Borel--Laplace sum as a Gaussian integral against the $\sinh$-ratio kernel of Eq.~\eqref{eq:sihn_xi} and follow how the contour must be deformed under the modular $S$-transformation.

More precisely, consider the Eichler integral identity
\begin{equation}\label{Eichler-type-identity-1-new}
     \int_0^\infty \frac{\re^{-y^2/\tau}}{\sqrt{\tau}} {\bm K}_{N,k}(\pi y) \, dy
     =
     \sqrt{\frac{\pi}{8N}} \int_{0}^\infty \frac{\theta^{(1)}_{N,k}(\ri u)}{\sqrt{u+\frac{\pi N \tau}{2}}}\,du
     \, , \quad N\in\mathbb{Z}_{>0}\, , \; 0<k<2N\,,
\end{equation}
where $\theta^{(1)}_{N,k}(\ri u) = \sum_{m\equiv k\,(2N)} m\,\re^{-\pi u m^2/(2N)}$ denotes the restriction of the unary theta series to the positive imaginary axis. The identity holds for $\tau\in\IR_{>0}$ and extends by analytic continuation to $\Re(\tau)>0$, both sides being analytic there. It is derived from the Zwegers identity~\cite{zwegers}
\begin{equation}\label{Eichler-type-identity-2-new}
    \int_{-\infty}^\infty \frac{\re^{-\pi t y^2}}{y-\mathrm{i} \kappa} dy = \pi \mathrm{i}  \int_0^\infty \frac{\kappa\, \re^{-\pi \kappa^2 u}}{\sqrt{u+t}} du ~, \quad \kappa\in\mathbb{R}_{\ne 0}\,,
\end{equation}
together with the Mittag--Leffler expansion in Eq.~\eqref{eq:mittag-leffler} and, crucially, the modular transformation of the theta series (Lemma~\ref{lem:mod_theta}), as follows.

To see this, recall the even pole expansion of the kernel ${\bm K}_{N,k}$ in Eq.~\eqref{eq:ML-paired} at the end of Section~\ref{sec:MRS-EI}.
Note that the symmetric truncation $\lim_{m_\ast\to\infty}\sum_{m=-m_\ast}^{m_\ast}$ in Eq.~\eqref{eq:mittag-leffler} is what renders this pairing legitimate, the unpaired series being only conditionally convergent. Similarly, since the Gaussian factor in Eq.~\eqref{Eichler-type-identity-2-new} is even in $y$, only the even part of the pole $\tfrac{1}{y-\ri\kappa}$ contributes, and the Zwegers identity is equivalent to
\begin{equation}\label{eq:zwegers-even}
    \int_{-\infty}^\infty \frac{\re^{-\pi t y^2}}{y^2+\kappa^2}\, dy \= \pi \int_0^\infty \frac{\re^{-\pi \kappa^2 u}}{\sqrt{u+t}}\, du \, , \quad \kappa\in\IR_{>0} \, .
\end{equation}
We now extend the integral on the left-hand side of Eq.~\eqref{Eichler-type-identity-1-new} to the full real line by evenness, substitute the expansion in Eq.~\eqref{eq:ML-paired}, and apply Eq.~\eqref{eq:zwegers-even} term-wise to each pole pair, with $t=\tfrac{1}{\pi\tau}$ and $\kappa=\tfrac{m}{N}$; after the rescaling $u \mapsto \tfrac{N}{2}u$, which normalises the Gaussians to the theta convention, this leads to
\begin{equation}\label{eq:mordell-intermediate}
    \int_0^\infty \frac{\re^{-y^2/\tau}}{\sqrt{\tau}} {\bm K}_{N,k}(\pi y) \, dy 
    \=
    \frac{1}{\sqrt{2N^3\tau}} \int_0^\infty \frac{\vartheta_{N,k}^{(1)}(u)}{\sqrt{u+\frac{2}{\pi N \tau}}}\, du \, ,
\end{equation}
where the interchange of summation and integration is justified by the absolute convergence, for every $u>0$, of the series
\begin{equation}\label{eq:sine-theta}
    \vartheta_{N,k}^{(1)}(u) := \sum_{m=1}^{\infty} m \, \sin\!\big(\tfrac{\pi k m}{N}\big) \, \re^{-\frac{\pi u m^2}{2N}} \= \frac{1}{2} \sum_{k'\in\ZZ/2N} \sin\!\big(\tfrac{\pi k k'}{N}\big)\, \theta^{(1)}_{N,k'}(\ri u) \, .
\end{equation}
The second equality in Eq.~\eqref{eq:sine-theta} exposes the structure of the intermediate result: the pole expansion does \emph{not} directly produce the single component $\theta^{(1)}_{N,k}$, but rather the sine-weighted combination of all components, that is, up to normalisation, the $k$-th component of the action of the $S$-matrix $\Om^{(1)}(S)$ of Eq.~\eqref{eq: Om} on the theta vector. It is at this point that classical modularity enters: by Lemma~\ref{lem:mod_theta}, equivalently by Poisson summation, the sine-weighted combination folds back to a single component at the inverted argument, so that
\begin{equation}\label{eq:sine-theta-poisson}
    \vartheta_{N,k}^{(1)}(u) \= \sqrt{\frac{N}{2}}\; u^{-3/2}\, \theta^{(1)}_{N,k}(\ri/u) \, .
\end{equation}
Substituting Eq.~\eqref{eq:sine-theta-poisson} into Eq.~\eqref{eq:mordell-intermediate} and changing variables $u\mapsto 1/u$ then leads directly to Eq.~\eqref{Eichler-type-identity-1-new}, the remaining prefactors combining into $\sqrt{\pi/8N}$.

This derivation makes the geometric mechanism behind quantum modularity transparent. The identity in Eq.~\eqref{Eichler-type-identity-1-new} closes only because of the modular transformation in Eq.~\eqref{eq:sine-theta-poisson} of the unary theta series: the $S$-matrix intertwining the two sides is the same one that closes the modular resurgence diagram in Eq.~\eqref{diag:theta}. Moreover, under the modular $S$-transformation the Gaussian factor $\re^{-y^2/\tau}$ acquires the opposite decay behaviour along the real axis, and the $y$-contour on the left-hand side of Eq.~\eqref{Eichler-type-identity-1-new} must be rotated in the complex plane to preserve convergence. In doing so, it sweeps across the simple poles $y = \ri m/N$ of the kernel, picking up residues proportional to $\sin\big(\tfrac{\pi k m}{N}\big)$---these residues are precisely the obstruction to classical modularity and determine the quantum modular behaviour of the Eichler integrals. However, this failure of modularity can be overcome by considering a bimodular completion, as explained below.

\paragraph{Bimodular completion.}

Following~\cite{Bringmann2019} (see also~\cite{CCR-ongoing, CCR-ongoing2}), the weight-$k$ holomorphic Eichler integral of a weight-$(2-k)$ cusp form $g(\tau)$ admits a bimodular completion of weight $(k,0)$ defined as
\begin{equation} \label{eq:completed-Eichler-integral-new}
    \int_\tau^{\tilde\tau} g(\tau')(\tau'-\tau)^{-k}d\tau'\,,
\end{equation}
where the variable $\tilde\tau\in\mathbb{H}$ representing the upper limit of integration transforms under modular transformations like $\tau$. In other words, Eq.~\eqref{eq:completed-Eichler-integral-new} is a modular form of weight $(k,0)$ with respect to both parameters $\tau$ and $\tilde\tau$, which gives a {\it completion} of the Eichler integral of $g(\tau')$. 

The analogue of the identity in Eq.~\eqref{Eichler-type-identity-2-new} for the bimodular Eichler integral completion is given by the following proposition. 

\begin{prop}\label{Bimod-prop-new}
    Let $t,k\in\mathbb{R}$ with $t>0$, $k\neq 0$, and $\tau,\,\tilde\tau\in\mathbb{H}$ such that the path of integration from $\tau$ to $\tilde\tau$ is well-defined and convergent. Then the identity 
    \begin{equation}
        \label{Eichler-type-identity-3-new}
    \int_{-\infty}^\infty \frac{\re^{-\pi k^2 \tau -\pi (t+\tau) y^2} -  \re^{-\pi k^2 \tilde\tau -\pi (t+\tilde\tau) y^2}}{y-\mathrm{i} k} dy 
    = 
    \pi \mathrm{i}  \int_\tau^{\tilde\tau} \frac{k \, \re^{-\pi k^2 u}}{\sqrt{u+t}} du
    \end{equation}
    generalises the identity in Eq.~\eqref{Eichler-type-identity-2-new}.
\end{prop}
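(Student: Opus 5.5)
The plan is to prove the identity as a difference of two ``one-endpoint'' identities, each obtained by differentiating in the endpoint variable and checking a boundary value. Consider, for $\sigma\in\IH$ (or more generally $\Re(t+\sigma)>0$), the function
\be
F(\sigma) := \int_{-\infty}^\infty \frac{\re^{-\pi k^2 \sigma -\pi (t+\sigma) y^2}}{y-\ri k}\, dy \, ,
\ee
which converges absolutely because $\Re(t+\sigma)>0$, and is holomorphic in $\sigma$ there by dominated convergence. The left-hand side of Eq.~\eqref{Eichler-type-identity-3-new} is then $F(\tau)-F(\tilde\tau)$. The right-hand side is $-\int_{\tilde\tau}^{\tau}G(u)\,du$ with $G(u)=\pi\ri\,k\,\re^{-\pi k^2 u}(u+t)^{-1/2}$. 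It therefore suffices to show that $F'(\sigma) = -G(\sigma)$ along the path from $\tau$ to $\tilde\tau$, with the principal branch of $(u+t)^{-1/2}$, which is single-valued on $\Re(u+t)>0$.

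The key step is the derivative computation. Differentiating under the integral gives
\be
F'(\sigma) = -\pi \int_{-\infty}^\infty \frac{(k^2+y^2)\,\re^{-\pi k^2 \sigma -\pi (t+\sigma) y^2}}{y-\ri k}\, dy = -\pi\,\re^{-\pi k^2\sigma}\int_{-\infty}^\infty (y+\ri k)\,\re^{-\pi (t+\sigma) y^2}\, dy \, ,
\ee
since $k^2+y^2=(y-\ri k)(y+\ri k)$ cancels the pole. The odd part vanishes, and the Gaussian integral gives $(t+\sigma)^{-1/2}$, analytically continued from real $t+\sigma>0$ with the principal branch. Hence $F'(\sigma) = -\pi\ri k\,\re^{-\pi k^2\sigma}(t+\sigma)^{-1/2}=-G(\sigma)$. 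The fundamental theorem of calculus along the chosen path then yields $F(\tau)-F(\tilde\tau)=\int_\tau^{\tilde\tau}G(u)\,du$, which is exactly Eq.~\eqref{Eichler-type-identity-3-new}.

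To justify that this generalises Eq.~\eqref{Eichler-type-identity-2-new}, take $\tau\to 0$ and $\tilde\tau\to\ri\infty$. The term $F(\tilde\tau)$ vanishes because of the factor $\re^{-\pi k^2\tilde\tau}$ together with the bounded $y$-integral, after rescaling. The endpoint $\tau=0$ recovers the Zwegers identity. For $\kappa=k$, the integrand at $\tau=0$ is $\re^{-\pi t y^2}/(y-\ri k)$, and the upper limit reproduces $\int_0^{\ri\infty}$, which one may rotate to $\int_0^\infty$ when $k^2>0$ by the exponential decay of the integrand.

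The main obstacle is bookkeeping rather than analysis. One must make sure the path from $\tau$ to $\tilde\tau$ stays in the region $\Re(u+t)>0$, where both the Gaussian integral converges and the branch of the square root is continuous; this is the role of the hypothesis that the path is ``well-defined and convergent.'' One must also justify uniform domination when differentiating under the integral, which is done on compact subsets of that half-plane. I would first verify the identity for real positive $\tau,\tilde\tau$ and then extend it by analytic continuation in each variable.
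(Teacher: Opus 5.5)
Your proof of the identity is correct. It uses the paper's ingredients: the Gaussian integral $\int_{\mathbb{R}}\mathrm{e}^{-\pi a y^2}\,dy=a^{-1/2}$ for $\Re(a)>0$, and the factorization $y^2+k^2=(y-\mathrm{i}k)(y+\mathrm{i}k)$ followed by discarding the odd part. You simply run them in the opposite order.

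The paper starts from the right-hand side and inserts the Gaussian representation of $(u+t)^{-1/2}$. It then swaps the $u$- and $y$-integrals by Fubini, integrates $\mathrm{e}^{-\pi(y^2+k^2)u}$ explicitly in $u$, and splits $\tfrac{\mathrm{i}k}{y^2+k^2}=\tfrac{1}{y-\mathrm{i}k}-\tfrac{y}{y^2+k^2}$. You instead write the left-hand side as $F(\tau)-F(\tilde\tau)$, differentiate under the integral sign, and apply the fundamental theorem of calculus.

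Your version replaces Fubini on (path)$\times\mathbb{R}$ by dominated convergence on compacta of the half-plane $\Re(\sigma+t)>0$. It also makes the hypothesis on the path explicit: the path must stay in that half-plane. That condition is not implied by $\tau,\tilde\tau\in\mathbb{H}$, so your ``or more generally $\Re(t+\sigma)>0$'' should read ``more precisely''. Your route also explains the pole cancellation structurally. The residue of the integrand of $F(\sigma)$ at $y=\mathrm{i}k$ is $\mathrm{e}^{\pi t k^2}$, independent of $\sigma$, so $\partial_\sigma$ removes it. This is the same fact behind the vanishing residue the paper records right after its proof. The paper's route, in turn, displays the combined integrand and its removable singularity in a single expression.

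One correction concerns your side remark on recovering Eq.~\eqref{Eichler-type-identity-2-new}. For $\tilde\tau\in\mathrm{i}\mathbb{R}_{>0}$ the factor $\mathrm{e}^{-\pi k^2\tilde\tau}$ is unimodular, so it is not what makes $F(\tilde\tau)$ vanish. The vanishing comes from the $|t+\tilde\tau|^{-1/2}$ decay of the $y$-integral, after a contour rotation that does not cross $y=\mathrm{i}k$. Likewise, $\int_0^{\mathrm{i}\infty}$ converges only conditionally, so rotating it to $\int_0^\infty$ needs a Jordan-lemma argument rather than exponential decay on the contour. It is simpler to let $\tau\to0$ and $\tilde\tau\to+\infty$ along $\mathbb{R}_{>0}$. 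Then $|F(\tilde\tau)|\le |k|^{-1}\mathrm{e}^{-\pi k^2\tilde\tau}(t+\tilde\tau)^{-1/2}\to0$, and the right-hand side is already the Zwegers integral with $\kappa=k$.
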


\begin{proof}
Using the standard Gaussian integral representation for the inverse square root 
\begin{equation}
    \frac{1}{\sqrt{u+t}} = \int_{-\infty}^\infty \re^{-\pi (u+t)y^2} dy ~,
\end{equation}
the right-hand side of Eq.~\eqref{Eichler-type-identity-3-new} is 
\begin{equation}
    \pi \mathrm{i}  \int_\tau^{\tilde\tau} \frac{k \, \re^{-\pi k^2 u}}{\sqrt{u+t}} du 
    =
    \pi \mathrm{i} k \int_\tau^{\tilde\tau}  \, \re^{-\pi k^2 u} \, \left( \int_{-\infty}^\infty \re^{-\pi (u+t)y^2} \, dy \right) \, du ~.
\end{equation}
The integrand is Lebesgue integrable and, by Fubini’s theorem, we can swap the order of integration 
\begin{equation}
    \pi \mathrm{i}  \int_\tau^{\tilde\tau} \frac{k \, \re^{-\pi k^2 u}}{\sqrt{u+t}} du 
    =
    \pi \mathrm{i} k \int_{-\infty}^\infty \, \re^{-\pi t y^2} \, \left( \int_\tau^{\tilde\tau}  \re^{-\pi (y^2+k^2)u} \, du \right) \, dy ~,
\end{equation}
evaluate the inner integral with respect to $u$ analytically
\begin{equation}
    \int_\tau^{\tilde\tau}  \re^{-\pi (y^2+k^2)u} \, du = \frac{ \re^{-\pi(y^2+k^2)\tau} - \re^{-\pi(y^2+k^2)\tilde\tau} }{ \pi (y^2+k^2) } ~,
\end{equation}
and find 
\begin{equation}
    \pi \mathrm{i}  \int_\tau^{\tilde\tau} \frac{k \, \re^{-\pi k^2 u}}{\sqrt{u+t}} du 
    =
    \int_{-\infty}^\infty \, \frac{ \mathrm{i} k }{ y^2+k^2 } \, \left( \re^{-\pi (t+\tau) y^2-\pi k^2\tau} - \re^{-\pi (t+\tilde\tau) y^2-\pi k^2 \tilde\tau} \right) \, dy ~.
\end{equation}
Now by substituting $\tfrac{ \mathrm{i} k }{y^2+k^2}= \tfrac{1}{y-\mathrm{i}k} - \tfrac{y}{y^2+k^2}$ on the right-hand side, and noting that the integral  
\begin{equation}
    \int_{-\infty}^\infty \, \frac{ y }{ y^2+k^2 } \, \left( \re^{-\pi (t+\tau) y^2-\pi k^2\tau} - \re^{-\pi (t+\tilde\tau) y^2-\pi k^2 \tilde\tau} \right) \, dy = 0
\end{equation}
vanishes because the expression inside the parentheses is an even function of $y$ and the full integrand is an odd function on a symmetric interval, we arrive directly at the identity in Eq.~\eqref{Eichler-type-identity-3-new}.
\end{proof}

Now remark that on the left-hand side of Eq.~\eqref{Eichler-type-identity-3-new}, the point $y=\mathrm{i}k$ is a simple pole with \emph{zero} residue 
\begin{equation}
    \mathrm{Res}_{y=\mathrm{i}k} \frac{\re^{-\pi k^2 \tau -\pi (t+\tau) y^2} -  \re^{-\pi k^2 \tilde\tau -\pi (t+\tilde\tau) y^2}}{y-\mathrm{i} k} = \re^{\pi t k^2} - \re^{\pi t k^2} = 0
\end{equation}
and therefore a removable singularity. Under the $S$-transformation, the Gaussian factor $\re^{-\pi(y^2+k^2)\tau}$ becomes $\re^{\pi(y^2+k^2)/\tau}$ with opposite decay behaviour along the real axis, and similarly for $\re^{-\pi(y^2+k^2)\tilde\tau}$. To maintain convergence, the $y$-contour must be rotated into a region of the complex plane where the new exponent decays. For the Eichler integral of $\theta^{(1)}_{N,k}(\tau)$, where $\tilde\tau$ is pinned at infinity, working term-wise in the $q$-series expansion of the theta function, this rotation sweeps across the imaginary-axis pole $y=\mathrm{i}k$ and picks up the residue $\re^{\pi t k^2}$ of Eq.~\eqref{Eichler-type-identity-3-new}, thereby determining the modular anomaly of the Eichler integral. For the bimodular Eichler integral, on the contrary, this residue is cancelled by the $\tilde\tau$-dependent contribution, ensuring modularity. The distinction between a genuine simple pole and a removable singularity is therefore not a deformation of a single object, but a consequence of removing one of two cancelling pieces which exist for any $\tilde\tau$ under the conditions of Eq.~\eqref{eq:completed-Eichler-integral-new}.

\subsection{Beyond unary theta series}\label{sec:beyond-unary}

The proofs of Sections~\ref{subsec:modularresugence}--\ref{sed:EffectivenessMedianResum} use remarkably little about unary theta series beyond their classical modularity. In this subsection, we outline how the paradigm extends to the Eichler integral of an arbitrary vector-valued cusp form of half-integral weight $\kappa\in\{1/2,3/2\}$---hence, in particular, to the shadow of any vector-valued mock modular form of weight $2-\kappa\in\{3/2,1/2\}$. Throughout, we indicate for each statement the relevant proof from Sections~\ref{sec:mod-res-vect} and~\ref{sec:eichler}, together with the substitutions required.
Finally, we highlight in Remark~\ref{rmk:beyond-caveats-v2} the one structural feature of Sections~\ref{subsec:modularresugence}--\ref{sed:EffectivenessMedianResum} that does \emph{not} generalize here.

\paragraph{Setup.} Let $\boldsymbol{\varrho}\colon\mathsf{Mp}_2(\IZ)\to\mathsf{GL}_r(\CC)$ be a unitary multiplier system with $\boldsymbol{\varrho}(T)=\mathrm{diag}\big(\re^{2\pi\ri\alpha_1},\ldots,\re^{2\pi\ri\alpha_r}\big)$ of finite order, so that $\alpha_k\in\QQ\cap[0,1)$, and let ${\bf g}=(g_1,\ldots,g_r)\colon\IH\to\CC^r$ be a vector-valued cusp form of weight $\kappa\in\{1/2,3/2\}$ and multiplier $\boldsymbol{\varrho}$, with Fourier expansions
\be\label{eq:g-fourier-general-v2}
    g_k(\tau)\=\sum_{\substack{\lambda\in\alpha_k+\IZ_{\geq0} \\ \lambda>0}} a_k(\lambda)\,q^{\lambda}\,,\quad a_k(\lambda)=O\big(\lambda^{\kappa/2}\big)\,,
\ee
the coefficient bound being the trivial Hecke estimate for cusp forms. As before, we take $q=\re^{2 \pi \ri \tau}$.
We introduce ${\bf a}:=(a_1,\ldots,a_r)$, whose polynomial growth in $\lambda$ will be exploited below, and denote by $D\in\IZ_{>0}$ the common denominator of the set of numbers $\alpha_k$, so that every exponent in Eq.~\eqref{eq:g-fourier-general-v2} satisfies $\lambda\in\frac{1}{D}\IZ_{>0}$. The vector of Eichler integrals $\mathsf{EI}[{\bf g}](\tau):=\big({\sf EI}[g_k](\tau)\big)_{k}$ and the vector of Dirichlet series $\pmb{\CCL}(s):=\big(\CCL_k(s)\big)_k$ are defined component-wise by
\be\label{eq:EI-general-v2}
    {\sf EI}[g_k](\tau):=\sum_{\lambda} \lambda^{1-\kappa}\,a_k(\lambda)\,q^{\lambda}\,,\quad
    \CCL_k(s):=\sum_{\lambda}\frac{a_k(\lambda)}{\lambda^{s}}\,,
\ee
generalizing Eqs.~\eqref{eq:EI-intro} and~\eqref{eq:L}. For $\kappa=3/2$, one has the convergent integral representation 
\be \label{eq: int-rep-g}
{\sf EI}[g_k](\tau)\propto\int_\tau^{\ri\infty}g_k(t)\,(t-\tau)^{-1/2}dt \, , 
\ee
analogous to Eq.~\eqref{eq:tilde_int_1}; for $\kappa=1/2$, the same representation holds with kernel $(t-\tau)^{-3/2}$ in a regularized form, analogous to Eq.~\eqref{eq:tilde_int_0}.

We observe that under the dictionary
\be\label{eq:dictionary-general-v2}
\ba
    {\bf g}=\boldsymbol\theta^{(\nu)}_N\,,\quad \boldsymbol\varrho =\Om^{(\nu)} \,, \quad\kappa=\omega=\tfrac12+\nu\,,\\
    D=4N \, , \quad \lambda=\tfrac{m^2}{4N}\,,\quad a_k(\lambda)=m^{\nu}\,\omega^{(\nu)}_k(m;N)\,,
\ea
\ee
with $m \in \ZZ_{>0}$, one has 
\begin{equation} 
{\sf EI}[g_k]=(4N)^{\nu-\frac12}\,{\sf EI}[\theta^{(\nu)}_{N,k}] \quad {\rm{and}} \quad \CCL_k(s)=(4N)^{s}\,\CCL^{(\nu)}_k(s)~,
\end{equation}
whence the completed series of Step~1 below coincides \emph{on the nose} with ${\boldsymbol \Lambda}^{(\nu)}_N$ of Eq.~\eqref{eq:Lambda-def}. All identities below therefore reduce to those of Sections~\ref{subsec:modularresugence}--\ref{sed:EffectivenessMedianResum} (up to the recorded powers of $4N$), for $\nu=1$ as a special case of the present setup and for $\nu=0$ in the sense of the following remark.

\begin{rmk} \label{rmk: theta-0-rmk}
Restricting to cusp forms keeps the discussion clean, at the cost of excluding one of the two families of Section~\ref{sec:eichler}. The case $\nu=1$, $\kappa=3/2$, is a genuine instance of the present setup, since $\theta^{(1)}_{N,k}$ has no constant term. The case $\nu=0$, $\kappa=1/2$, strictly speaking, is not: the component $\theta^{(0)}_{N,0}$ has constant term $1$, so the vector $\boldsymbol\theta^{(0)}_N$ is not cuspidal, and it has been treated separately in Sections~\ref{subsec:modularresugence}--\ref{sed:EffectivenessMedianResum}. The two steps at which the difference is realized are already visible there. Firstly, $\boldsymbol\Lambda^{(0)}_N(s)$ acquires simple poles at $s=0$ and $s=\tfrac12$, interchanged by the functional equation, so that $\int_0^\infty\theta^{(0)}_{N,0}(\ri t)\,t^{s-1}dt$ diverges for every $s$ and the constant term has to be split off in the proof of Lemma~\ref{lem:functionalRel-Lfn-theta}. Secondly, the Mellin inversion in the proof of Lemma~\ref{lem:asympfalsetheta} picks up an additional pole at $s=1$, producing the elementary term $\tfrac{\ri}{\pi\tau}\boldsymbol1$ that is subtracted there, in Eq.~\eqref{A:perturbativeexpansionB-0} and in Eq.~\eqref{eq:med-theta}. Apart from having to carry that term along, Steps 1--7 here apply to $\boldsymbol\theta^{(0)}_N$ unchanged.
\end{rmk}

\paragraph{Step 1: functional equation via the vector-valued Hecke correspondence.} The completed Dirichlet vector $\boldsymbol{\Lambda}(s):=(2\pi)^{-s}\,\Gamma(s)\,\pmb{\CCL}(s)$ extends to an entire function of finite order, bounded on vertical strips, and satisfies
\be\label{eq:func-general-v2}
    \boldsymbol{\Lambda}(s)\=\ri^{\kappa}\,\boldsymbol\varrho(S)\,\boldsymbol{\Lambda}(\kappa-s)\,, \quad s \in \CC \, .
\ee
The proof of Lemma~\ref{lem:functionalRel-Lfn-theta} applies: one splits the Mellin transform of ${\bf g}$ along the imaginary axis at the $S$-fixed point and applies the modular transformation ${\bf g}|_\kappa S={\bf g}$ to the lower segment; cuspidality at the unique cusp guarantees entireness. Since $\boldsymbol\Lambda$ is entire and $\Gamma$ has no zeros, $\pmb{\CCL}$ is entire as well. This is the meromorphic continuation of the Dirichlet vector ``induced by modularity''.

\paragraph{Step 2: asymptotic expansion.} The Mellin transform of ${\sf EI}[g_k](\ri t)$ equals $(2\pi)^{-s}\Gamma(s)\,\CCL_k(s+\kappa-1)$, as in Eq.~\eqref{eq:mellin_eichler}. By Step~1, its only poles are those of $\Gamma(s)$, at $s=-\ell$ with $\ell\in\IZ_{\geq0}$.
Exactly as in Lemma~\ref{lem:asympfalsetheta}, Mellin inversion together with the functional equation in Eq.~\eqref{eq:func-general-v2} then give the vector of Gevrey-$1$ asymptotic expansions $\tEI[{\bf g}](\tau)=\sum_{\ell=0}^\infty {\bf c}_\ell\,\tau^\ell$ as $\tau\to0$ with $\Im(\tau)>0$, where
\be\label{eq:cl-general-v2}
    {\bf c}_\ell\=\mathfrak{c}_\kappa\;\Gamma(2-\kappa+\ell)\,(2\pi\ri)^{-\ell}\, \boldsymbol{\varrho}(S)\,\pmb{\CCL}(\ell+1)\,,
    \quad \mathfrak{c}_\kappa:=\frac{\ri^{\kappa}\sin\!\big(\pi(\kappa-1)\big)}{\pi}\,(2\pi)^{\kappa-2}\,,
\ee
the analogue of Eq.~\eqref{eq: cj-coeffs}. 

\paragraph{Step 3: resurgent structure.} As a consequence of Eq.~\eqref{eq:cl-general-v2}, the gamma factors cancel in the Borel transform $\mathcal{B}\big[\tau^{2-\kappa}\,\tEI[{\bf g}]\big]$, and the partial-fraction identity
\be\label{eq:partial-frac-general-v2}
    \sum_{\ell=0}^\infty \pmb{\CCL}(\ell+1)\,x^\ell\=\sum_{\lambda}\frac{{\bf a}(\lambda)\,\lambda^{-1}}{1-x/\lambda}
\ee
yields the closed form
\be\label{eq:Borel-general-v2}
    \mathcal{B}\big[\tau^{2-\kappa}\,\tEI[{\bf g}]\big](\xi)
    \= \mathfrak{c}_\kappa\;\xi^{1-\kappa}\,\boldsymbol\varrho(S)\sum_{\lambda}\frac{{\bf a}(\lambda)\,\lambda^{-1}}{1-\xi/(2\pi\ri\lambda)} 
    \= -\,2\pi\ri\,\mathfrak{c}_\kappa\;\xi^{1-\kappa}\,\boldsymbol\varrho(S)\sum_{\lambda}\frac{{\bf a}(\lambda)}{\xi-2\pi\ri\lambda}\,,
\ee
where $\xi$ is the Borel variable conjugate to $\tau$. 
Away from the origin, the function above is meromorphic with only \emph{simple poles}, located at
\be\label{eq:poles-general-v2}
    \xi_\lambda\=2\pi\ri\lambda\;\in\;\CA\,\IZ_{>0}\,,\quad \CA:=\frac{2\pi\ri}{D}\,,
\ee
and with residues the \emph{Stokes vectors}
\be\label{eq:Stokes-general-v2}
    {\bf St}_\lambda\;\propto\;\lambda^{1-\kappa}\,\boldsymbol\varrho(S)\,{\bf a}(\lambda)\,,
\ee
the factor $\lambda^{1-\kappa}$ coming from the prefactor $\xi^{1-\kappa}$ evaluated at $\xi_\lambda$. 
Parts~1 and~2 of Definition~\ref{def:vv-MRS} are therefore verified for the vector of asymptotic series $\tau^{2-\kappa}\tEI[{\bf g}]$, which has trivial secondary resurgent series, and whose Stokes vectors have polynomial growth by Eq.~\eqref{eq:g-fourier-general-v2}. Writing $\lambda=m/D$ with $m\in\IZ_{>0}$, the associated Stokes--Dirichlet vector is
\be\label{eq:Stokes-dirichlet-general-v2}
    \boldsymbol\CL(s):=\sum_{m=1}^\infty\frac{{\bf St}_{m/D}}{m^s}\;\propto\;D^{-s}\,\boldsymbol\varrho(S)\,\pmb{\CCL}(s+\kappa-1)\,,
\ee
whose meromorphic continuation is supplied by Step~1. 
We stress the shift by $\kappa-1$ in the argument of the Dirichlet series, which is the analogue of the one by $\nu-1$ in Eq.~\eqref{eq:L-funct-Stheta}.

Two features of Eq.~\eqref{eq:poles-general-v2} deserve further emphasis, since they are weaker than their counterparts in Section~\ref{sec:MRS-EI}. 
First, the tower of simple poles is \emph{one-sided}: all values of $\lambda$ are positive, so all singularities of the Borel transform lie on the ray with $\arg(\xi)=\pi/2$.
Second, the inclusion $\{ \xi_\lambda \}_{\lambda} \subseteq \CA \ZZ_{>0}$ is in general strict, and different components of the vector $\mathcal{B}\big[\tau^{2-\kappa}\,\tEI[{\bf g}]\big]$ have generally different singular sets, since the exponents in the $q$-series expansion of $g_k(\tau)$ in Eq.~\eqref{eq:g-fourier-general-v2} lie in $\alpha_k+\IZ_{\geq0}$. Thus, one has a tower of simple poles at $\xi=\rho_m=\CA m$, $m\in\IZ_{>0}$, only in the weak sense that the Stokes vectors at $m$ vanish if the simple pole is missing there. Note that both discrepancies are resolved in Section~\ref{sec:MRS-EI}, where $\lambda=m^2/(4N)$, by the substitution $\xi=\zeta^2$, which converts the sparse one-sided tower $\{\CA m^2\}_{m>0}$ of the $\xi$-plane into the equally spaced two-sided tower $\{\pm\sqrt{\CA}\,m\}_{m>0}$ of the $\zeta$-plane in Eq.~\eqref{eq:poles-1}. No such folding exists for general ${\bf g}$. 
The formula in Eq.~\eqref{eq:partial-frac-general-v2} is the generic avatar of the Mittag--Leffler expansion in Eq.~\eqref{eq:mittag-leffler}, although the closed trigonometric form in Eq.~\eqref{eq:sihn_xi} is special to the example of unary theta series.

\paragraph{Step 4: discontinuity and Fricke-type symmetry.} Summing the exponentially small contributions from the poles, as in Lemma~\ref{lem:disc}, we obtain 
\be\label{eq:disc-general-v2}
    {\rm disc}_{\frac{\pi}{2}}\big[\tau^{2-\kappa}\,\tEI[{\bf g}]\big](\tau)\;\propto\;\boldsymbol\varrho(S)\;{\sf EI}[{\bf g}]\Big(-\tfrac{1}{\tau}\Big)\,,
\ee
the identification of the right-hand side following Eqs.\eqref{eq:EI-general-v2} and~\eqref{eq:Stokes-general-v2}.
Using
\be\label{eq:metaplectic-general-v2}
    \ri^{2\kappa}(\boldsymbol\varrho(S))^2\,{\bf g}\={\bf g}\,,
\ee
Eq.~\eqref{eq:disc-general-v2} can be recast in the slash form 
\be \label{eq:disc-general-v2-slash}
\tfrac12{\rm disc}_{\frac{\pi}{2}}\big[\tau^{\eta}\tEI[{\bf g}]\big](\tau) =\tau^{\eta}\,\big({\sf EI}[{\bf g}]|_\eta S \big)(\tau) \, , \quad \eta=2-\kappa \, , 
\ee
analogous to Eq.~\eqref{eq:disc_Phi-slash}, once the proportionality constant in Eq.~\eqref{eq:disc-general-v2} is normalised as in Lemma~\ref{lem:disc}.

\paragraph{Step 5: closure of the diagram.} Because the vector of Dirichlet series of the Stokes constants $\boldsymbol\CL(s)$ is proportional to $D^{-s}\, \boldsymbol\varrho(S)\,\pmb{\CCL}(s+\kappa-1)$ by means of Eq.~\eqref{eq:Stokes-dirichlet-general-v2}, the bottom row of the analogue of the modular resurgence diagram in Eq.~\eqref{diag:theta} is simply the $\boldsymbol\varrho(S)$-image of the top row, and the loop closes through Eq.~\eqref{eq:metaplectic-general-v2}, \emph{i.e.}, \be
\ri^{2\kappa}\, (\boldsymbol\varrho(S))^2\,\pmb{\CCL}=\pmb{\CCL} \, , 
\ee
exactly as in Section~\ref{subsec:paradigm_theta}: the diagram closes upon itself, and no genuinely new function appears.

\paragraph{Step 6: quantum modularity.} The $q$-series vector ${\sf EI}[{\bf g}]$ is a vector-valued holomorphic quantum modular form for $\mathsf{SL}_2(\ZZ)$ of weight $\eta=2-\kappa$ and multiplier system $\boldsymbol\varrho$: the proof of Proposition~\ref{prop:qm-ei} carries over with $\boldsymbol\theta^{(\nu)}_N$ replaced by ${\bf g}$, since it uses only the integral representations above, the modular invariance ${\bf g}|_\kappa\gamma={\bf g}$, and the exponential decay $O\big(\re^{-2\pi\lambda_{\min}\Im(t)}\big)$ of ${\bf g}(t)$ along vertical rays $\Im(t)\to \infty$, which follows from cuspidality. The cocycle is again the period integral
\be\label{eq:cocycle-general-v2}
    h_\gamma[{\sf EI}[{\bf g}]](\tau)\;\propto\;\int_{-d/c}^{\ri\infty}{\bf g}(t)\,(t-\tau)^{\kappa-2}\,dt\,, \quad \gamma=\left(\begin{smallmatrix}
        a & b\\
        c & d
    \end{smallmatrix}\right)\in\mathsf{SL}_2(\ZZ) \, .
\ee
As in Proposition~\ref{prop:qm-ei}, the branch cut of $(t-\tau)^{\kappa-2}$ requires care. Assuming that $c>0$, with the same branch-cut convention and choice of vertical contour, the right-hand side of Eq.~\eqref{eq:cocycle-general-v2} computes $h_\gamma$ only for $\Re(\tau)>-d/c$, and its holomorphic extension to $\CC_\gamma=\CC\smallsetminus\IR_{\leq-d/c}$, required Definition~\ref{def: vv-QMF}, is obtained by deforming the contour to any admissible path from $-d/c$ to $\ri\infty$ avoiding $\tau$ from the same side, which is permitted by holomorphy and cuspidal decay at the endpoints. The case of $c<0$ is treated symmetrically, giving a holomorphic extension to $\CC_\gamma=\CC\smallsetminus\IR_{\geq-d/c}$. 
Hence, Conjecture~\ref{conj:vvMR-conj2} holds.

\paragraph{Step 7: median resummation.} The proof of Proposition~\ref{prop:median} combines three ingredients, each of which generalizes to the current setup, while the branch subtleties discussed there remain unchanged. (i) The $S$-cocycle in Eq.~\eqref{eq:cocycle-general-v2} evaluated at $-1/\tau$,  as in Eq.~\eqref{eq:median-proof1-slash}, together with the metaplectic identity in Eq.~\eqref{eq:metaplectic-general-v2}, analogous to Eq.~\eqref{eq: Om-square-id}. (ii) The discontinuity formula in the slash form of Eq.~\eqref{eq:disc-general-v2-slash}, as in Lemma~\ref{lem:disc}. (iii) The identification of the lateral Borel--Laplace sum with the period integral, which follows from the Gamma-function representation of the kernel $(t+1/\tau)^{\kappa-2}$ and the elementary Laplace transform 
\be
\int_0^{\ri\infty}\re^{2\pi\ri\lambda t}\re^{-vt}dt=\frac{1}{v-2\pi\ri\lambda} \, .
\ee
The resulting integrand is precisely the pole expansion in Eq.~\eqref{eq:Borel-general-v2}. Cuspidality is what makes the third step clean, no contribution arising at the origin of the Borel plane. Hence, 
\be
\CS^{\rm med}_{\tfrac{\pi}{2}}\big[\tEI[{\bf g}]\big](\tau)={\sf EI}[{\bf g}](\tau) \, , 
\ee
which is the conclusion of Conjecture~\ref{conj:vvMR-conj1}. We stress that what Steps~1--6 establish is the \emph{conclusion} of Conjecture~\ref{conj:vvMR-conj1}; its hypothesis, that the Mellin transforms of the $q$-series ${\sf EI}[g_k]$ be linear combinations of $L$-functions in the sense of Definition~\ref{def: Lfunct}, is not available at this level of generality, as discussed in the following remark.

\begin{rmk}\label{rmk:beyond-caveats-v2}
Four caveats delimit the scope of this generalization. First, part~3 of Definition~\ref{def:vv-MRS} is not established by Steps~1--7. What Eq.~\eqref{eq:Stokes-dirichlet-general-v2} provides is a decomposition for the vector of Stokes--Dirichlet series in the shape of Eq.~\eqref{eq: def-Lfunct} with the single scale $c=D$ and with $\pmb{\CCL}$ in place of a vector of $L$-functions. An Euler product, which Definition~\ref{def: Lfunct} requires, would need ${\bf g}$ to be related to Hecke eigenforms. Accordingly, the statement of Step~3 is that the vector of asymptotic series $\tau^{2-\kappa}\tEI[{\bf g}]$ satisfies parts~1 and~2 of Definition~\ref{def:vv-MRS} and that its Stokes--Dirichlet vector $\pmb{\CL}$ admits meromorphic continuation through a functional equation; the arithmetic input of part~3 of Definition~\ref{def:vv-MRS}, that is the existence of an Euler product, is genuinely extra. Second, the passage to the (unfolded) Borel variable $\zeta$ via $\xi=\zeta^2$, which is what makes the tower of Section~\ref{sec:MRS-EI} equally spaced and two-sided, has no analogue for general ${\bf g}$, as explained in Step~3. Third, for $\kappa=1/2$ the apparent generality is illusory: by the Serre--Stark theorem~\cite{SerreStark}, every component of ${\bf g}$ is a linear combination of unary theta series, and one is back, up to rescalings $\tau\mapsto\ell^2\tau$, to the original setting of Section~\ref{sec:eichler}; the cuspidality hypothesis singles out the combinations with vanishing constant term, so that $\boldsymbol\theta^{(0)}_N$ itself is excluded, as recalled in Remark~\ref{rmk: theta-0-rmk}. Therefore, the genuinely new examples occur at $\kappa=3/2$, for which no analogue of the Serre--Stark theorem holds and the space of cusp forms is not spanned by unary theta series.
\end{rmk}

\section{Conclusions}\label{sec:conclusions}

In this paper, we have extended the framework of modular resurgence to the vector-valued setting (Definition~\ref{def:vv-MRS}). 
We have formulated the corresponding paradigm relating the resurgent and arithmetic structures of canonical pairs of vector-valued MRSs (Section~\ref{sec:paradigm}). 
The median resummations of such pairs produce two vectors of $q$-series related by an $S$-type, or Fricke-type more generally, transformation, and the Mellin transform of one such vector reproduces (up to simple prefactors) the Dirichlet series of the Stokes constants of the asymptotic expansion of the other. These Dirichlet series are, in turn, directly related through their meromorphic continuation. 
We have also proposed two conjectures on the summability and quantum modularity of certain vectors of $q$-series with vector-valued modular resurgent asymptotics (Conjectures~\ref{conj:vvMR-conj1} and~\ref{conj:vvMR-conj2}). Namely, such $q$-series are expected to be vector-valued holomorphic quantum modular forms and to be effectively reconstructed by median resummation.
Finally, we have applied the framework and verified both the paradigm and the conjectures explicitly in two large families of examples: vectors of $q$-Pochhammer symbols (Section~\ref{sec: qPochh-vv}) and vectors of Eichler integrals of unary theta series (Section~\ref{sec:eichler}). The latter is the fully worked instance of a general result: as outlined in Section~\ref{sec:beyond-unary}, the same modular resurgent structure---single tower of simple Borel singularities, Stokes constants rotated by the $S$-matrix, meromorphic continuation via the Hecke correspondence, quantum modularity, and median summability---holds for the Eichler integral of any vector-valued cusp form of weight $1/2$ or $3/2$.

The two examples illustrate a structural distinction: in the $q$-Pochhammer case, the modular representation underlying the associated quantum modular form is trivial and the vector-valued framework amounts to a change-of-basis repackaging of the scalar resurgent structure of~\cite{FR25}; in the Eichler-integral case, the modular representation is non-trivial and the vector-valued framework introduced in this paper is strictly necessary. This structural difference can be read off the rows of the diagram in Eq.~\eqref{diag:vvMR-diagram}.
For the vectors of $q$-Pochhammer symbols, the two rows are genuinely independent: they contain two distinct vectors of $q$-series $({\bf g}_N$ and ${\bf f}_N)$, each capturing the non-perturbative content of the other (diagram in Eq.~\eqref{diag:vvMR-diagram-poch}). 
For the vectors of Eichler integrals of unary theta series, by contrast, the second row is forced by the classical modularity of the underlying theta series, via the Hecke theorem, to be a copy of the first row after acting by the $S$-multiplier $(\bEI^{(\nu)}_N$ and $\Om^{(\nu)}(S)\, \bEI^{(\nu)}_N)$. Thus, no genuinely new function appears, and the diagram closes upon itself (diagram in Eq.~\eqref{diag:theta}). The matrix that closes the diagram corresponding to the modular $S$-matrix in this latter case is therefore not a coincidental match but a structural consequence of the underlying modularity. This mechanism is not special to theta series: it operates verbatim for every half-integral weight vector-valued cusp form, which is precisely why the diagram closes for the whole family of Section~\ref{sec:beyond-unary}.
A closely related distinction appears in the quantum modular behaviour of the two examples.  While the vectors of $q$-Pochhammer symbols of Section~\ref{sec: qPochh-vv} are holomorphic quantum modular functions component-wise, the vectors of Eichler integrals of Section~\ref{sec:eichler} are genuine vector-valued holomorphic quantum modular forms with a non-trivial multiplier system. 

A natural next direction of investigation would be to extend the framework beyond the depth-one quantum modular setting explored in this paper. Higher-depth quantum modular forms, and in particular higher-rank false theta functions, arise as characters of higher-rank logarithmic vertex algebras and as quantum invariants of plumbed three-manifolds in the higher-rank case, playing a role structurally analogous to that of the unary false theta functions considered here. 

\section*{Acknowledgements}

We would like to thank Daniele Dorigoni, Eleanor McSpirit, and Mart\'i Rossell\'o for insightful discussions.
The research of M.C. is supported by the Academia Sinica Investigator grant (AS-IA-111-M03) and NWO Vici grant (number VI.C.232.117). 
The research of I.C. is supported by the Horizon Europe MSCA grant No.101204790, \emph{QFT2VOA-DuStRel}.
The research of C.R. is supported by the Huawei Young Talents Program at Institut des Hautes \'Etudes Scientifiques (IHES).

\bibliography{Bibliography.bib}
\bibliographystyle{ytphys.bst}

\end{document}